\theoremstyle{plain}
\newtheorem{theorem}{Theorem}[section]
\newtheorem{proposition}[theorem]{Proposition}
\newtheorem{lemma}[theorem]{Lemma}
\newtheorem{corollary}[theorem]{Corollary}
\newtheorem{definition}[theorem]{Definition}
\newtheorem{remark}[theorem]{Remark}
\newtheorem{example}[theorem]{Example}
\theoremstyle{nonumberplain}
\newtheorem{proof}{Proof}
\numberwithin{equation}{section}
\numberwithin{table}{section}
\newcommand{\Lie}[1]{\operatorname{\textsl{#1}}}
\newcommand{\lie}[1]{\operatorname{\mathfrak{#1}}}
\newcommand{\Lel}[1]{{\mathsf{#1}}}
\newcommand{\SL}{\Lie{SL}}
\newcommand{\SU}{\Lie{SU}}
\newcommand{\Un}{\Lie{U}}
\newcommand{\g}{\lie{g}}
\newcommand{\lt}{\lie{t}}
\newcommand{\Ld}{\mathcal L}
\newcommand{\Hodge}{{*}}
\newcommand{\bC}{{\mathbb C}}
\newcommand{\bR}{{\mathbb R}}
\newcommand{\Hor}{\mathcal H}
\newcommand{\Ver}{\mathcal V}
\newcommand{\vp}{\varphi}
\newcommand{\wt}{\widetilde}
\DeclareMathOperator{\ad}{ad}
\DeclareMathOperator{\CP}{\bC P}
\DeclareMathOperator{\id}{id}
\DeclareMathOperator{\re}{Re}
\DeclareMathOperator{\Ric}{Ric}
\DeclareMathOperator{\Hol}{Hol}
\newcommand{\hook}{{\lrcorner\,}}
\newcommand{\ST}{\textsc{st}\xspace}
\newcommand{\SST}{\textsc{sst}\xspace}
\newcommand{\KT}{\textsc{kt}\xspace}
\newcommand{\SKT}{\textsc{skt}\xspace}
\newcommand{\NB}{\nabla}
\newcommand{\LC}{\NB^{\textup{LC}}}
\DeclarePairedDelimiter{\Span}{\langle}{\rangle}
\DeclarePairedDelimiter{\abs}{\lvert}{\rvert}
\newcommand{\sslash}{\mathbin{{/}\mkern-6mu{/}\mkern-2mu}}
\DeclareFontFamily{U}{bigeuf}{}
\DeclareFontShape{U}{bigeuf}{m}{n}{<-6>s*[1.5]eufm5%
<6-8>s*[1.5]eufm7%
<8->s*[1.5]eufm10}{}
\DeclareSymbolFont{bigeufletters}{U}{bigeuf}{m}{n}
\DeclareMathSymbol{\sumcic}{\mathop}{bigeufletters}{`S}
\newcommand{\br}{\hspace{0pt}}
\newcommand{\bdash}{-\br}
\begin{document}

\begin{center}
  \LARGE\bfseries{The odd side of torsion geometry}
\end{center}
\begin{center}
  \Large Diego Conti and Thomas Bruun Madsen
\end{center}

\begin{abstract}
We introduce and study a notion of `Sasaki with torsion structure' (\ST) as an odd-dimensional analogue of K\"ahler with torsion geometry (\KT). These are normal almost contact metric manifolds that admit a unique compatible connection with \( 3 \)-form torsion. Any odd-dimensional compact Lie group is shown to admit such a structure; in this case the structure is left-invariant and has closed torsion form.

We illustrate the relation between  \ST structures and other generalizations of Sasaki geometry, and explain how some standard constructions in Sasaki geometry can be adapted to this setting. In particular, we relate the \ST structure to a \KT structure on the space of leaves, and show that both the cylinder and the cone over an \ST manifold are \KT, although only the cylinder behaves well with respect to closedness of the torsion form. Finally, we introduce a notion of `\( G \)-moment map'. We provide criteria based on equivariant cohomology  ensuring the existence of these maps, and then apply them as a tool for reducing \ST structures.   
\end{abstract}

\bigskip
\begin{small}\noindent
  \emph{Keywords:} Connections with torsion, almost contact metric, Sasakian, moment map, K\"ahler with torsion.
\end{small}

\bigskip

\begin{small}\noindent
  \emph{2010 Mathematics Subject Classification:} Primary 53C15; Secondary 53C25, 53C55, 53D20, 70G45.
\end{small}
\bigskip

\section{Introduction}
\label{sec:intro}
Traditionally, one of the main reasons to study odd-dimensional Riemannian geometry has been the quest for new solutions to Einstein's equations. In this way, important contributions have been made, e.g., via the study of Sasaki-Einstein manifolds (cf. \cite{Boyer-G:Sasak-book}). In modern physical theories one encounters various generalizations of Einstein's equations. For example, type II string theory with constant dilation involves a Riemannian manifold \( (N,g) \) together with a triple \( (\NB,c,\psi) \) consisting of a metric connection \( \NB \) with \( 3 \)-form torsion \( c \), and a spinor field \( \psi \). This triple of data is then subject to the following constraints (\cite{Strominger:superstrings,Friedrich-I:spinors}): 

\begin{equation}
\label{eq:field-type-II}
\Ric^{\NB}=0, \quad d^\Hodge c=0,\quad \NB\psi=0,\quad c\cdot\psi=0.
\end{equation} 

In an insightful paper \cite{Friedrich-I:spinors}, Friedrich and Ivanov paved the way for the study of torsion geometry in odd dimensions. In particular, they indicated \cite[Theorem 8.2]{Friedrich-I:spinors} the boundaries of almost contact metric manifolds relevant in torsion geometry. Amongst the interesting odd-dimensional geometries, most of the attention has so far been centred around Sasaki manifolds, recently extending studies to include quasi-Sasaki structures (see, for instance, \cite{Puhle:qSasak}). However, it is worth noting that physical theories with non-constant dilation admit solutions that are not quasi-Sasaki (see, e.g., \cite[Theorem 2.6]{Fernandez-al:heterotic}). 

By drawing an analogy to torsion geometry in even dimensions, our attention is directed to the study of normal almost contact metric manifolds that come equipped with a unique compatible connection with skew-symmetric torsion; for such manifolds the Reeb vector field must be Killing (see Definition \ref{def:ST} and Proposition \ref{prop:STcharac}). This particular combination of integrability (normality) and Riemannian (Killing) input distinguishes our class of odd-dimensional torsion geometries, which we dub `\ST manifolds', or `\SST' if the torsion form is closed.    

The first important source of genuine \ST manifolds is provided by odd-dimensio\bdash{}nal compact Lie groups. Theorem~\ref{thm:cmp-Lie-grp} shows any odd-dimensional compact Lie group admits a left-invariant \ST structure, and moreover the associated torsion \( 3 \)-form is closed and coclosed. In particular, this class of examples satisfies the first two conditions of \eqref{eq:field-type-II}. One may regard these manifolds as the odd-dimensional analogue of the well known \cite{Spindel-al} \SKT structures on even-dimensional compact Lie groups. 

Turning from examples to classifications, we show in Proposition~\ref{prop:sandwich} that an \ST manifold is locally `sandwiched' between \KT manifolds. Another connection between \ST and \KT geometry is provided in terms of a cone construction, Proposition~\ref{prop:ST-KT-cone}, imitating the one known from Sasaki geometry. By replacing the cone with a cylinder, as in Proposition \ref{prop:ST-KT-cyl}, we are able to relate \ST and \KT manifolds in a way that preserves the property of having closed torsion \( 3 \)-form. 

In the presence of symmetry, meaning a freely acting, structure preserving, compact Lie group \( G \), a useful way of constructing new examples of Sasaki manifolds is via reduction. The final part of the paper extends this tool to \ST structures. In Definition \ref{def:G-moment}, we introduce the notion of a `\(G\)-moment map' for \ST manifolds. Proposition \ref{prop:reduction} then shows that the zero level set of a \( G \)-moment map reduced modulo symmetries, \mbox{\( \mu^{-1}(0)\slash G \)}, is again an \ST manifold.  These studies turn out to provide additional motivation for the particular definition of \ST manifolds (see Remark~\ref{rem:STred}).  To complete the description, beside illustrating the construction with examples, we provide conditions ensuring the existence of a \( G \)-moment map. Most significantly, Theorem \ref{thm:reduction} asserts that for any \SST manifold endowed with a symmetry group such a map exists, provided the \( \partial\overline{\partial} \) lemma and the equivariant \( \
partial\overline{\partial} \) lemma are satisfied, and the torsion extends to a closed equivariant \( 3 \)-form.

Whilst completing the paper, another notion of \ST manifold was introduced independently in \cite{Houri-al:ST}. Fortunately, there is no conflict in terminology, since the two definitions are equivalent (see Remark~\ref{rem:STvsST}). Interestingly, this reference is partly motivated by physics. As a consequence, one obtains a first explicit application of \ST manifolds in the context of supergravity theories.

\section{Sasaki with torsion structures}
\label{sec:ST}

While Sasaki manifolds already come with a compatible connection of skew-sym\bdash{}metric torsion, a systematic study reveals that the appropriate odd-dimensional analogue of \KT geometry include a larger subclass of the class of normal almost contact metric manifolds. In this section, we introduce the notion of \ST manifolds, discuss the fundamental theory, and supplement by a variety of examples.

\subsection{Basic definitions}

In order to motivate our particular notion of `Sasaki geometry with torsion', we first recall that in the Hermitian setting, a \KT manifold is a Hermitian manifold \( (M,g,J) \) together with a Hermitian connection \( \NB \) that has skew-symmetric torsion, meaning that \( \NB g=0=\NB J \) and that the map \[ (X,Y,Z)\mapsto g(\NB_XY-\LC_XY,Z) \]  is a \( 3 \)-form. Such a connection always exists, and moreover it is unique. Indeed, one \cite{Gauduchon:Dirac} finds that \[ \NB=\LC+T\slash2, \] where \( g(T(X,Y),Z)= d\omega(JX,JY,JZ) \).   

The odd-dimensional replacement of \KT geometry takes as its starting point a \emph{normal almost contact metric manifold \( (N^n,g,\xi,\eta,\vp) \)}.  Thus \( (N,g) \) is an odd-dimensional Riemannian manifold equipped with a unit-norm vector field \( \xi \), its dual \( 1 \)-form \( \eta=\xi^\flat=g(\xi,\cdot) \), and a bundle endomorphism \( \vp\colon TN\to TN \) such that the following conditions hold:  
\begin{equation}
\begin{gathered}
\label{eq:acm-con}
\vp(\xi)=0,\, \vp^2=-1+\eta\otimes\xi,\\
g(\vp(X),\vp(Y))=g(X,Y)-\eta(X)\eta(Y).
\end{gathered}
\end{equation}

The normality refers to the vanishing of the Sasaki-Hatakeyama tensor (\cite{Sasaki-H:Nijenhuis})
\begin{equation}
\label{eq:Sasaki-Htensor}
S=[\vp(X),\vp(Y)]+\vp^2[X,Y]-\vp[\vp(X),Y]-\vp[X,\vp(Y)]+d\eta(X,Y)\xi,
\end{equation}
for all vector fields \( X,Y \) on \( N \).  The fundamental \( 2 \)-form of \( (g,\xi,\eta,\vp) \) is defined by \( F:=g(\vp\cdot,\cdot) \).

The object of study in this paper is the following odd-dimensional cousin of \KT geometry:

\begin{definition}
\label{def:ST}
A \emph{Sasaki with torsion manifold}, briefly an \ST manifold, is a normal almost contact metric manifold \( (N,g,\xi,\eta,\vp) \) such that \( \xi \) is a Killing vector field. 
\end{definition}

One may rewrite the Killing condition to get the following alternative characterisation of \ST manifolds:

\begin{proposition}
\label{prop:ST-charac-deriv}
A normal almost contact metric manifold \( (N,g,\xi,\eta) \) is \ST if and only if \( \Ld_\xi F= 0=\xi\hook dF \).
\end{proposition}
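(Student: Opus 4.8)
The plan is to show that, for a normal almost contact metric manifold, the Killing condition on $\xi$ is equivalent to the pair of conditions $\Ld_\xi F = 0$ and $\xi\hook dF = 0$. The natural starting point is the well-known identity relating the Lie derivative of the fundamental form to the covariant derivative of $\vp$ and the Levi-Civita connection; since $N$ is normal, there are standard structure equations expressing $\LC \vp$, $\LC\eta$, and $d\eta$ in terms of $F$, $dF$, and the derivatives of $\xi$. First I would record that $\Ld_\xi g = 0$ is equivalent to $g(\LC_X\xi, Y)$ being skew-symmetric in $X,Y$, i.e. to $\LC\xi$ (equivalently $d\eta$, since for normal structures $d\eta(X,Y) = -2g(\LC_X\xi,Y)$ up to sign and symmetrization subtleties) being a $2$-form with no symmetric part.

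The key computational step is to relate $\Ld_\xi F$ and $\xi\hook dF$ to the symmetric and skew parts of $\LC\xi$. I would compute $\Ld_\xi F$ using $\Ld_\xi F = d(\xi\hook F) + \xi\hook dF$ (Cartan's formula), and observe that $\xi\hook F = g(\vp\xi,\cdot) = 0$ by \eqref{eq:acm-con}, so in fact $\Ld_\xi F = \xi\hook dF$ always holds. Hence the two stated conditions are actually equivalent to each other, and the proposition reduces to: $\xi$ is Killing $\iff$ $\xi\hook dF = 0$. For the remaining implication I would use normality: the Sasaki--Hatakeyama tensor $S$ vanishing allows one to express $dF$ and the obstruction to $\xi$ being Killing in terms of the same tensor $\LC\vp$. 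Concretely, one has a formula of the shape $g((\LC_X\vp)Y,Z) = $ (terms involving $dF$, $d\eta$, and $\eta$) valid for normal structures; pairing appropriately and feeding in $X = \xi$ or contracting with $\xi$ isolates $\Ld_\xi g$ against $\xi\hook dF$.

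Concretely, I would proceed as follows. Since $\vp^2 = -1 + \eta\otimes\xi$ and $\vp\xi = 0$, differentiating gives $(\LC_X\vp)\xi = -\vp(\LC_X\xi)$, and $g((\LC_X\vp)Y, Z)$ is skew in $Y,Z$ because $\NB g = 0$ and $\vp$ is skew-adjoint on the contact distribution. Normality translates into the vanishing of a combination of $\vp$-twisted derivatives of $\vp$; the standard consequence, which I would invoke (it is the odd analogue of the classical fact that a Hermitian structure is Kähler iff $J$ is parallel iff $d\omega = 0$ plus integrability), is that for a normal almost contact metric structure the full $3$-tensor $\LC F$ is determined by $dF$, $d\eta$ and $\eta$, and in particular the ``symmetric-in-$\xi$'' piece $g((\LC_\xi\vp)\cdot,\cdot)$ together with $\LC\xi$ are controlled by $dF$. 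Then $\Ld_\xi g(Y,Z) = g(\LC_Y\xi, Z) + g(Y, \LC_Z\xi)$, and I would show this symmetric part vanishes precisely when $\xi\hook dF = 0$ by matching it, via the structure equations, with the symmetric part of the relevant contraction of $dF$.

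The main obstacle is the bookkeeping in the normality identities: extracting from $S = 0$ the precise linear relation between $\LC F$, $dF$, $d\eta$ and the symmetrization of $\LC\xi$, and making sure the ``$\eta\otimes\xi$'' correction terms in $\vp^2$ do not introduce spurious symmetric contributions. I expect this to be where the real work lies; once the identity $g((\LC_\xi\vp)Y,Z) + \text{(correction)} = dF(\xi, \vp Y, \vp Z) + \dots$ is in hand, both implications follow by contracting with $\xi$ and symmetrizing. A clean way to organize it is to first prove $\Ld_\xi F = \xi\hook dF$ (immediate, as above), reducing the claim to the single equivalence ``$\xi$ Killing $\iff \xi\hook dF = 0$'', and then to dispatch that by the normality structure equations; alternatively one can cite the corresponding computation from Friedrich--Ivanov \cite{Friedrich-I:spinors} and adapt it, which would shorten the argument considerably.
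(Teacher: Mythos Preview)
Your Cartan-formula observation is correct and is really the heart of the matter: since \( \xi\hook F=g(\vp\xi,\cdot)=0 \), one has \( \Ld_\xi F=\xi\hook dF \) automatically, so the two displayed conditions collapse to one. From here, however, you take a considerably longer road than the paper. You propose to go through the Levi--Civita structure equations for \( \LC\vp \) on a normal almost contact metric manifold and extract the relation between the symmetric part of \( \LC\xi \) and \( \xi\hook dF \). This can be made to work, but the bookkeeping you flag as the ``main obstacle'' is avoidable.

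The paper's argument is a one-liner: on any normal almost contact metric manifold one already knows (Blair \cite{Blair:quasi-Sasaki}, Lemma~2.1) that
\[
\xi\hook d\eta=0\quad\text{and}\quad \Ld_\xi\vp=0.
\]
Given these, the equivalence is immediate. From \( F=g(\vp\cdot,\cdot) \) and \( \Ld_\xi\vp=0 \) one gets \( (\Ld_\xi F)(X,Y)=(\Ld_\xi g)(\vp X,Y) \), so \( \Ld_\xi F=0 \) is exactly \( (\Ld_\xi g)|_{\ker\eta\times TN}=0 \); the remaining direction \( (\Ld_\xi g)(\xi,\cdot)=\Ld_\xi\eta=\xi\hook d\eta \) vanishes by the first cited relation. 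Thus \( \Ld_\xi F=0 \iff \Ld_\xi g=0 \), and combined with your \( \Ld_\xi F=\xi\hook dF \) the proposition follows.

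In short: your reduction to a single condition is right, but instead of unpacking \( S=0 \) through \( \LC\vp \), it is cleaner to invoke the two Lie-derivative identities \( \xi\hook d\eta=0 \) and \( \Ld_\xi\vp=0 \) directly. Your route would eventually rederive these (they are, after all, consequences of normality), but there is no need to pass through the covariant derivative.
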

\begin{proof}
The equivalence follows from the definition and the relations
\[\xi\hook d\eta =0= \Ld_\xi\phi,\]
which always hold on an normal almost contact metric structure (see \cite[Lemma 2.1]{Blair:quasi-Sasaki}). 
\end{proof}

At a first glance the notion of \ST geometry may seem somewhat remote from that of \KT geometry. This gap is bridged by using observations from \cite{Friedrich-I:spinors}. By an \emph{almost contact metric connection} we shall mean a connection \( \NB \) that preserves the (normal) almost contact metric structure: \[ \NB g=0=\NB\xi=\NB\vp. \] 
Indicating by \( d^\vp \)  the operator \[ d^\vp \alpha=d\alpha(\vp\cdot,\dotsc,\vp\cdot), \]
we can associate to any almost contact metric manifold  the \( 3 \)-form \[ c=\eta\wedge d\eta+d^\vp F. \] 
The following result, based on \cite[Theorem 8.2]{Friedrich-I:spinors}, now relates our definition of \ST manifolds to the study of connections with skew-symmetric torsion.

\begin{proposition}
\label{prop:STcharac}
A normal almost contact metric manifold \( (N,g,\xi,\eta,\vp) \) is an \ST manifold if and only if the map \( \nabla\colon \Gamma(TN)\to \Gamma(TN)\otimes \Gamma(T^*N) \), defined via \[ g(\NB_XY,Z)=g(\LC_XY,Z)+c(X,Y,Z)\slash2, \] is an almost contact metric connection. 
\end{proposition}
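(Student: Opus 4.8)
The plan is to show that the connection $\NB$ defined by $g(\NB_XY,Z)=g(\LC_XY,Z)+\tfrac12 c(X,Y,Z)$ is \emph{always} metric with totally skew torsion (since $c$ is a $3$-form), so the only real content is: $\NB$ preserves $\xi$, $\eta$ and $\vp$ if and only if the structure is \ST, i.e.\ (by Proposition~\ref{prop:ST-charac-deriv}) if and only if $\Ld_\xi F = 0 = \xi\hook dF$. The strategy is to invoke \cite[Theorem 8.2]{Friedrich-I:spinors}, which classifies those almost contact metric manifolds admitting an almost contact metric connection with skew torsion: it asserts such a connection exists and is unique exactly when the structure is normal \emph{and} $\xi$ is Killing, in which case the torsion is precisely the form $c=\eta\wedge d\eta + d^\vp F$. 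So the proof is essentially a matter of translating that theorem into the present notation and checking that the torsion $3$-form written there coincides with our $c$, and that $\NB = \LC + \tfrac12 c$ under the identification $g(T(X,Y),Z) = c(X,Y,Z)$.

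Concretely, I would proceed as follows. First, observe the easy direction: if $\NB$ as displayed is an almost contact metric connection, then it is in particular a metric connection with skew torsion $c$ preserving $\xi$; since $\NB\xi = 0$ implies $\xi$ is $\NB$-parallel and $\NB$ is metric, $\xi$ is Killing (the symmetric part of $\NB\xi^\flat$ vanishes because $\NB\eta=0$ and the torsion contribution $c(\xi,X,Y)$ is skew in $X,Y$); combined with the standing normality hypothesis this gives that $N$ is \ST. Second, for the converse, assume $N$ is \ST. Normality plus $\xi$ Killing are exactly the hypotheses of \cite[Theorem 8.2]{Friedrich-I:spinors}, so there is a (unique) almost contact metric connection $\bar\NB$ with skew torsion, and that reference identifies the torsion $3$-form as $\eta\wedge d\eta + d^\vp F = c$. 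Writing $\bar\NB = \LC + \tfrac12\bar c$ with $g(\bar c(X,Y),Z)$ the torsion, the standard formula relating a metric connection with skew torsion to Levi-Civita gives $g(\bar\NB_XY,Z) = g(\LC_XY,Z) + \tfrac12 c(X,Y,Z)$, which is exactly our $\NB$. Hence $\NB = \bar\NB$ is an almost contact metric connection.

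The one step requiring genuine care — and the main obstacle — is reconciling conventions: \cite{Friedrich-I:spinors} may phrase normality differently (e.g.\ via the Nijenhuis tensor of $\vp$ rather than the Sasaki--Hatakeyama tensor $S$), may use a different sign or factor in the torsion, and may state the Killing condition in the equivalent form $\Ld_\xi F = 0$ (together with the automatic $\xi\hook d\eta = 0$, $\Ld_\xi\vp = 0$ from \cite[Lemma 2.1]{Blair:quasi-Sasaki}) rather than as "$\xi$ Killing" directly. I would therefore spell out that $\xi$ Killing is equivalent, on a normal almost contact metric manifold, to $\Ld_\xi F = 0 = \xi\hook dF$ via Proposition~\ref{prop:ST-charac-deriv}, and verify that the $3$-form appearing in Friedrich--Ivanov's existence/uniqueness statement, after dualising with $g$, is our $c = \eta\wedge d\eta + d^\vp F$. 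Once the dictionary is fixed, everything else is the routine computation that a metric connection differing from $\LC$ by $\tfrac12$ of a $3$-form has that $3$-form as its torsion, and conversely.
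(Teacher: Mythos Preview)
Your approach is correct but differs from the paper's. You treat \cite[Theorem~8.2]{Friedrich-I:spinors} as a black box: normality plus $\xi$ Killing gives existence of a unique almost contact metric connection with skew torsion, and that torsion is precisely $c$, so that connection must be $\NB$. The paper instead gives a direct verification: it checks $\NB\xi=0$ using the Killing condition to rewrite $d\eta(X,Y)=2(\LC_X\eta)(Y)$, and then verifies $\NB\vp=0$ by a case-by-case analysis of $2g((\NB_X\vp)(Y),Z)$ according to whether each of $X,Y,Z$ lies in $\Span{\xi}$ or $\xi^\perp$, invoking along the way the explicit formula for $g((\LC_X\vp)(Y),Z)$ on a normal structure and the facts that $d\eta$ has type $(1,1)$ and $dF$ has type $(2,1)+(1,2)$ with respect to $\vp$.

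Your route is shorter and conceptually clean, and your forward direction (that $\NB\xi=0$ forces $\xi$ Killing via the skewness of $c$) is exactly the paper's. The price is that the converse becomes entirely a citation, and the ``reconciling conventions'' step you flag is then the whole content; you would need to confirm that the torsion form in \cite{Friedrich-I:spinors} is literally $\eta\wedge d\eta + d^\vp F$ under the paper's sign and normalisation conventions. The paper's computation, by contrast, makes the proposition essentially self-contained (modulo a few cited identities) and exposes the type decomposition facts about $d\eta$ and $dF$ that are used repeatedly later, for instance in Remark~\ref{rem:ST-KT-comp}(iii) and in the cylinder and cone constructions.
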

\begin{proof}
Clearly, \( \NB \) defines a metric connection; it is the unique metric connection whose torsion satisfies the condition \[ g(\NB_XY-\NB_YX-[X,Y],Z)=c(X,Y,Z), \] for all vector fields \( X,Y,Z \). 

Assume now that \( \NB \) is an almost contact metric connection. In particular, we have \( \NB\xi=0 \). This implies \( \xi \) is a Killing field:
\begin{equation*}
\begin{split}
0&=2(g(\NB_X\xi,Y)+g(X,\NB_Y\xi))-(c(X,\xi,Y)+c(X,Y,\xi))\\
&=2(g(\LC_X\xi,Y)+g(X,\LC_Y\xi)).
\end{split}
\end{equation*} 
Consequently, \( (g,\xi,\eta,\vp) \) defines an \ST structure on \( N \).

Conversely, let us suppose \( (g,\xi,\eta,\vp) \) is \ST. To show that \( \NB \) is an almost contact metric connection, we have to verify the conditions \( \NB\xi=0=\NB\vp \). The first of these follows by noting that the Killing condition implies \( d\eta(X,Y)=2(\LC_X\eta)(Y) \) since
\begin{equation*}
(\LC_X\eta)(Y)=g(\LC_X\xi,Y)=-g(\LC_Y\xi,X)=-(\LC_Y\eta)(X).
\end{equation*}
Whence,
\begin{equation*}
\begin{split}
2g(\NB_X\xi,Y)&=2g(\LC_X\xi,Y)+c(X,\xi,Y)=2(\LC_X\eta)(Y) -c(\xi,X,Y)\\
     &=d\eta(X,Y)-d\eta(X,Y)=0,
\end{split}
\end{equation*}
and \( \NB\xi=0 \), as required. 

Finally, we are left to prove
\begin{equation}
\label{eqn:nablaphi0}
2g((\NB_X\vp)(Y),Z)=2g((\LC_X\vp)(Y),Z)+c(X,Y,\vp(Z)) + c(X,\vp(Y),Z)
\end{equation}
is zero. Since all terms are tensorial in \( X,Y,Z \), we can proceed via a case-by-case analysis. Firstly, we distinguish between the two possibilities: \( Z=\xi \) or \( Z\perp\xi \). In the former, the term \( c(X,Y,\vp(Z)) \) vanishes identically, and \eqref{eqn:nablaphi0} reduces to
\begin{multline*}
2g((\LC_X\vp)(Y),\xi)+ d\eta(X,\vp(Y))\\
=2g(\xi,\NB_X(\vp(Y)))-c(\xi,X,\vp(Y))+d\eta(X,\vp(Y))=0.
 \end{multline*}

For the remaining case, \( Z\perp\xi \), note that  by normality of an \ST structure, we may also write 
\[ 2g((\LC_X\vp)(Y),Z)=dF(X,Y,Z)-dF(X,\vp(Y),\vp(Z))+\eta(Y)d\eta(X,\vp(Z)), \] (cf. \cite[p. 25]{Friedrich-I:spinors}).

Now consider the \( 2 \) possibilities \( Y=\xi \) and \( Y\perp\xi \). In the first case, \eqref{eqn:nablaphi0} becomes  \[ d\eta(X,\vp(Z))+c(X,\xi,\vp(Z))=0. \] 

In the second case, first take \( X=\xi \); then 
 \eqref{eqn:nablaphi0} reduces to 
\begin{equation*}
\begin{split}
 d\eta(Y,\vp(Z))+d\eta(\vp(Y),Z),
\end{split}
\end{equation*}
which is zero because, by normality, \( d\eta \) is of type \( (1,1) \) with respect to \( \vp \) (see, e.g., \cite[p. 333]{Blair:quasi-Sasaki}).

Finally, for the case \( X,Y,Z\perp\xi \), \eqref{eqn:nablaphi0} becomes
\begin{equation*}
dF(X,Y,Z)-dF(X,\vp(Y),\vp(Z))-dF(\vp(X),\vp(Y),Z)-dF(\vp(X),Y,\vp(Z)),
\end{equation*}
which also vanishes, since normality implies that \( dF \) has type \( (2,1)+(1,2) \) with respect to \( \vp \) (cf. \cite[Proposition 8.1]{Friedrich-I:spinors}).
\end{proof}

\begin{remark}
\label{rem:ST-KT-comp}
In the light of Proposition \ref{prop:STcharac}, it should come as no surprise that \KT and \textsc{st} geometries share many common features. However, some differences are inevitable. Notable features are:
\begin{compactenum}
\item by \cite[Theorem  8.2]{Friedrich-I:spinors}, \( \NB \) is the \emph{unique} almost contact metric connection with \( 3 \)-form torsion, so we may unambiguously refer to it as `the \ST connection' of \( (N,g,\xi,\eta,\vp) \).  If the associated torsion \( 3 \)-form is closed, \( dc=0 \), we shall say the Sasaki structure with torsion is \emph{strong}, or briefly refer to it as an \textsc{sst} structure. 
\item Since \( \NB \) is an almost contact metric connection, its restricted holonomy group is contained in \( 1\times\Un(k) \), \( n=2k+1 \). 
\item as noted in the proof of Proposition \ref{prop:STcharac}, \( d\eta \) is of type \( (1,1) \) with respect to \( \vp \), and \( dF \) has type \( (2,1)+(1,2) \). Consequently, the torsion \( 3 \)-form is of type \( (2,1)+(1,2) \) with respect to \( \vp \): \[ c(X,Y,Z)=c(\vp(X),\vp(Y),Z)+c(\vp(X),Y,\vp(Z))+c(X,\vp(Y),\vp(Z)).\]
\item in contrast with the K\"ahler setting, Sasaki manifolds (\( 2F=d\eta \)) already come with a compatible connection that has non-vanishing skew-symmetric torsion, \( c=2\eta\wedge F\neq0 \). On a Sasaki manifold the `horizontal' component \( d^\vp F \) of the torsion  is clearly zero, but the \SST condition never holds in dimensions \( \geqslant5 \) since \( d\eta\wedge d\eta\neq0 \).
\end{compactenum}
\end{remark}

The class of \ST manifolds differs from the classically studied subclasses of normal almost contact metric manifolds (see, e.g., \cite{Chinea-G:acm-clsI,Chinea-M:acm-clsII,Bazzoni-O:coK}). However, by using Proposition \ref{prop:STcharac}, we can express certain relations in terms of the torsion \( 3 \)-form:

\begin{proposition}
\label{prop:ST-charac-c}
An \ST manifold \( (N,g,\xi,\eta,\vp) \) is:
\begin{compactenum}
\item \emph{cosymplectic} if and only if the torsion vanishes, \( c=0 \);
\item \emph{co-K\"ahler} if and only if \( c=0 \)
\item \emph{quasi-Sasaki} if and only if \( c=\eta\wedge d\eta \);
\item  \emph{\( \alpha \)-Sasaki} if and only if \( c=\alpha\eta\wedge F \) for some \( \alpha\in\bR\setminus\{0\} \); 

(Sasaki is the case \( \alpha=2 \)) 
\item  \emph{\( \alpha \)-Kenmotsu} if and only if \( c=0 \); 
\item \emph{trans-Sasaki} if and only if \( c=\alpha\eta\wedge F \) for some \( \alpha\in \bR \).
\end{compactenum}
\end{proposition}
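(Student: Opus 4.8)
The plan is to reduce everything to Proposition~\ref{prop:STcharac}: each of the six classes is, classically, a subclass of the normal almost contact metric manifolds singled out by prescribing $d\eta$ and $dF$ (equivalently $\LC\xi$), so it suffices to translate such conditions into conditions on the torsion $c=\eta\wedge d\eta+d^\vp F$ and then run down the list.

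The first step is to establish three identities on an arbitrary \ST manifold. Since $\vp(\xi)=0$ we have $\xi\hook d^\vp F=0$; since $\xi\hook d\eta=0$ on a normal structure and $\eta(\xi)=1$, we have $\xi\hook(\eta\wedge d\eta)=d\eta$; hence $\xi\hook c=d\eta$, so $\eta\wedge d\eta$ is precisely the ``vertical'' part of $c$ and $d^\vp F$ the ``horizontal'' part. On the horizontal distribution $\Hor=\ker\eta$ one has $c=d^\vp F$, and as $\vp$ is invertible on $\Hor$ this yields $d^\vp F=0\iff dF|_{\Lambda^3\Hor}=0$. Finally, on an \ST manifold $\Ld_\xi F=0$ by Proposition~\ref{prop:ST-charac-deriv}, and since $\xi\hook F=0$ Cartan's formula gives $\xi\hook dF=0$; hence $dF$ is determined by $dF|_{\Lambda^3\Hor}$, so $d^\vp F=0\iff dF=0$. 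Combining these, on any \ST manifold
\begin{gather*}
c=0\iff d\eta=0\ \text{and}\ dF=0,\qquad c=\eta\wedge d\eta\iff dF=0,\\
c=\alpha\,\eta\wedge F\iff d\eta=\alpha F\ \text{and}\ dF=0\quad(\alpha\in\bR),
\end{gather*}
where in the last line $\xi\hook(\eta\wedge F)=F$ forces $d\eta=\alpha F$ and then $d^\vp F=c-\eta\wedge d\eta=0$, the converse being obvious.

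It remains to match these with the definitions. \emph{Cosymplectic} and \emph{co-K\"ahler} both reduce, for a normal structure, to $d\eta=0=dF$, hence to $c=0$; listing both makes the point that on an \ST manifold---already normal---the two notions coincide. \emph{Quasi-Sasaki} is ``normal with $dF=0$'' (automatically \ST, since $dF=0$ gives $\Ld_\xi F=0=\xi\hook dF$), hence $c=\eta\wedge d\eta$. \emph{$\alpha$-Sasaki} with $\alpha$ a nonzero constant has $d\eta=\alpha F$, which forces $dF=0$, hence $c=\alpha\,\eta\wedge F$; the Sasaki case is $\alpha=2$, recovering $c=2\eta\wedge F$ from Remark~\ref{rem:ST-KT-comp}. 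For the remaining two, the Killing hypothesis does extra work: an \emph{$\alpha$-Kenmotsu} structure has $d\eta=0$ and $dF=2\alpha\,\eta\wedge F$, so contracting with $\xi$ gives $2\alpha F=\xi\hook dF=0$; thus $\alpha=0$, the structure is cosymplectic (i.e.\ $0$-Kenmotsu) and $c=0$. A \emph{trans-Sasaki} structure has $d\eta=\alpha F$ and $dF=2\beta\,\eta\wedge F$ for functions $\alpha,\beta$; the same contraction forces $\beta=0$, and then $0=d(d\eta)=d\alpha\wedge F$ forces $\alpha$ constant, so $c=\alpha\,\eta\wedge F$; in each case the converse follows from the displayed equivalences.

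I expect most of the proof to be routine bookkeeping---pinning down the somewhat non-uniform conventions for the six classes and verifying the elementary contractions above. The one genuinely delicate step is the end of the trans-Sasaki case: concluding that $\alpha$ is constant from $d\alpha\wedge F=0$ really uses $\dim N\geqslant5$, and the three-dimensional situation (where $d\alpha\wedge F=0$ only yields $\xi(\alpha)=0$) has to be discussed separately.
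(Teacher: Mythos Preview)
Your proof is correct and follows essentially the same path as the paper's: both reduce each item to the defining conditions on \( d\eta \) and \( dF \), using \( \xi\hook dF=0 \) from Proposition~\ref{prop:ST-charac-deriv} as the key input, and your preliminary identities (\( \xi\hook c=d\eta \), \( d^\vp F=0\iff dF=0 \)) just make explicit what the paper leaves implicit.

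The one place you genuinely diverge is item (vi). The paper simply invokes Marrero's result \cite{Marrero:trans-Sasak} that a trans-Sasaki manifold with Killing Reeb field is either \( \alpha \)-Sasaki or cosymplectic, whereas you reprove this directly: \( \xi\hook dF=0 \) kills \( \beta \), and then \( d\alpha\wedge F=0 \) forces \( \alpha \) constant. Your route is more self-contained and, usefully, surfaces the dimensional hypothesis \( \dim N\geqslant5 \) needed for injectivity of \( \,\cdot\wedge F \) on \( 1 \)-forms; the citation buys brevity but hides exactly this point (Marrero's theorem itself carries the same hypothesis).
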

\begin{proof}
The assertions are easily derived from definitions. A cosymplectic manifold has \( dF=0=d\eta \), so (i) is immediate. Likewise for (ii), since co-K\"ahler means cosymplectic and normal. (iii) follows by using that quasi-Sasaki manifolds have \( dF=0 \) and that \ST manifolds have \( \xi\hook dF=0 \). By definition, an \( \alpha \)-Sasaki manifold has \( d\eta = \alpha F \) with \( \alpha\in\bR\setminus\{0\} \), so (iv) is obvious. Concerning the statement about \( \alpha \)-Kenmotsu manifolds, recall that these are characterised by having \( d\eta=0 \), \( dF=\alpha \eta\wedge F \) and now combine this with the \ST condition \( \xi\hook dF=0 \). Finally, the assertion in (vi) follows since any trans-Sasaki manifold, for which \( \xi \) is a Killing vector field, is either \( \alpha \)-Sasaki or cosymplectic (\cite{Marrero:trans-Sasak}).  
\end{proof}

\begin{example}
Consider the connected Lie group \( G \) (from \cite[Example 3]{Fernandez-al:HKT-amc}) which in terms of a basis \( \Lel e^1,\Lel e^2,\Lel e^3,\Lel e^4 \) of \( \g^* \) is determined via the structural relations \[ d\Lel e^1=0=d\Lel e^2,\, d\Lel e^3=\Lel e^3\wedge \Lel e^1 +\Lel e^4\wedge \Lel e^2,\, d\Lel e^4=\Lel e^4\wedge \Lel e^1 +\Lel e^2\wedge \Lel e^3. \] On the direct product \( N=\bR\times G \), we define an almost contact metric structure such that \( \Lel e^1,\dotsc, \Lel e^4,dt \) corresponds to an orthonormal coframe, and \[\Lel e^1\circ\vp=\Lel e^2,\,\Lel e^3\circ\vp=e^4, dt\circ\vp=0. \] Then, the resulting \ST structure has non-zero torsion \( 3 \)-form which is proportional to \( \Lel e^2\wedge\Lel e^3\wedge \Lel e^4 \). In particular, we see that \( dc\neq0 \) so that prescribed \ST structure on \( N \) is not \SST, although the `vertical' component \( \eta\wedge d\eta \) of the torsion is zero.
\end{example}

\begin{example}
\label{ex:quasi-Sasak-SKT}
In \cite[Section 2.1]{Fernandez-al:SKT}, one finds a number of examples of quasi-Sasaki Lie algebras satisfying the condition \( d\eta\wedge d\eta =0 \). By the above proposition, the associated \KT cylinder has closed \( 3 \)-form torsion. Consider, for instance, the Lie algebra \( \g \) endowed with an orthonormal basis \( \Lel e^1,\ldots,\Lel e^5 \) of \( \g^* \) such that
\begin{gather*}
d\Lel e^1=(\Lel e^1+\Lel e^2+\Lel e^4-\Lel e^5)\wedge\Lel e^3+\Lel e^2\wedge \Lel e^5,\\
d\Lel e^2=(-2\Lel e^2+2\Lel e^3-\Lel e^4+\Lel e^5)\wedge\Lel e^1+\Lel e^4\wedge( \Lel e^2-\Lel e^3+\Lel e^5),\\
d\Lel e^3=(\Lel e^2-\Lel e^3-\Lel e^4+\Lel e^5)\wedge\Lel e^1+\Lel e^4\wedge(-2e^2+2e^3+ \Lel e^5),\\
d\Lel e^4=(-\Lel e^1+\Lel e^3-\Lel e^4+\Lel e^5)\wedge\Lel e^2-\Lel e^3\wedge \Lel e^5,\,\quad  d\Lel e^5=(\Lel e^1-\Lel e^4)\wedge(\Lel e^2-\Lel e^3).
\end{gather*} 
The Lie algebra \( \g \) admits an \ST structure defined by \( g=\sum_{j=1}^5\Lel e_j^2 \) together with \( \xi=\Lel E_5, \eta=\Lel e^5 \) and \( \Lel e^1\circ\vp=-\Lel e^2 \), \( \Lel e^3\circ\vp=-\Lel e^4\). As \( d\eta \) is decomposable, we clearly have \( d\eta\wedge d\eta=0 \).
\end{example}

\subsection{Left-invariant ST structures on Lie groups}
\label{sec:Lie-grp-ST}

It is well known \cite[Theorem 5]{Boothby-W:contact} that the only semi-simple connected Lie groups carrying a left-invariant contact structure are the split form \( \SL(2,\bR) \) and its compact dual \( S^3\cong\SU(2) \). The following result thus illustrates the class of \ST structures is much richer than that of Sasaki structures. Indeed, each odd-dimensional compact Lie group admits an \( \ST \) structure whose torsion \( 3 \)-form is closed, i.e.,
an \( \SST \) structure.

\begin{theorem}
\label{thm:cmp-Lie-grp}
Any odd-dimensional connected compact Lie group admits a left-\break{invariant} \SST structure. 
\end{theorem}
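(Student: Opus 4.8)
The plan is to put a bi-invariant metric on $G$ and to read off the almost contact data from a suitable left-invariant complex structure on $G\times S^1$, in the spirit of the \SKT structures carried by every even-dimensional compact Lie group \cite{Spindel-al}; closedness of the torsion form will then follow by identifying the canonical flat connection of $G$ with its \ST connection.

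First I would fix an $\operatorname{Ad}$-invariant inner product on $\g$, hence a bi-invariant metric $g$ on $G$, together with a maximal torus $T\subset G$ with Lie algebra $\lt$. Since the nonzero roots of $G$ come in pairs, $\dim G-\dim\lt$ is even, so $\dim\lt$ is odd; in particular $\dim\lt\geqslant1$ and I may choose a unit vector $\xi\in\lt$. Passing to $\wt G:=G\times S^1$ with the product of $g$ and a unit-length bi-invariant metric on $S^1$, its maximal torus has the even-dimensional Lie algebra $\lt\oplus\bR\partial_t$. Samelson's construction of left-invariant complex structures on even-dimensional compact Lie groups --- complexify $\g$, fix a system of positive roots, and combine a complex structure on the maximal torus with the natural one on the sum of the positive root spaces --- then produces a left-invariant complex structure $J$ on $\wt G$, automatically orthogonal for a bi-invariant metric (the root $2$-planes being $\operatorname{Ad}(T)$-invariant). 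The only freedom used is the choice of complex structure on $\lt\oplus\bR\partial_t$, and since the orthogonal complement of $\bR\xi$ inside $\lt$ is even-dimensional I may require $J\partial_t=-\xi$.

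Next, set $\eta=\xi^\flat$ and define $\vp\colon\g\to\g$ by $JX=\vp X+\eta(X)\partial_t$; orthogonality of $J$ together with $J\partial_t=-\xi$ forces the $\partial_t$-component of $JX$ to be $\eta(X)$, so $\vp X$ is the $\g$-component of $JX$. A short computation from $J^2=-\id$ shows that $(g,\xi,\eta,\vp)$ is an almost contact metric structure, and it is normal: normality is equivalent (by the classical criterion) to integrability of the almost complex structure that $(\xi,\eta,\vp)$ induces on $G\times S^1$, and by construction this structure is $J$, which is integrable. Finally $\xi$ is a Killing field for $g$, since the flow of a left-invariant vector field consists of right translations, which are isometries of a bi-invariant metric. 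Hence $(G,g,\xi,\eta,\vp)$ is an \ST manifold.

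It remains to see that $dc=0$. Consider the flat connection $\NB$ on $G$ with $\NB_XY=0$ for all left-invariant $X,Y$. Then $\NB g=0$, $\NB\xi=0$ and $\NB\vp=0$, because $g$, $\xi$ and $\vp$ are left-invariant, while its torsion form $(X,Y,Z)\mapsto-g([X,Y],Z)$ is totally skew by $\operatorname{Ad}$-invariance of $g$; so $\NB$ is an almost contact metric connection with $3$-form torsion. By the uniqueness recalled in Remark~\ref{rem:ST-KT-comp}(i) (see also Proposition~\ref{prop:STcharac}) it must be the \ST connection, whence $c(X,Y,Z)=-g([X,Y],Z)$ is minus the Cartan $3$-form of $G$. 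Bi-invariant forms on a compact Lie group are harmonic, hence closed and coclosed; thus $dc=0$ (and in fact $d^\Hodge c=0$), and the structure is \SST. I expect the main obstacle to be the construction in the second paragraph --- producing $J$ on $G\times S^1$ with the normalization $J\partial_t=-\xi$, which is exactly where the oddness of $\dim G$ (equivalently of $\dim\lt$) enters; everything else then reduces to Proposition~\ref{prop:STcharac}, Remark~\ref{rem:ST-KT-comp}, and standard facts about compact Lie groups.
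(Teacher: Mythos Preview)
Your argument is correct and, at the structural level, coincides with the paper's: both produce $\vp$ by putting $\pm i$ on the positive/negative root spaces and an arbitrary orthogonal almost complex structure on the Cartan piece, both note that $\xi\in\lt$ is Killing for a bi-invariant metric, and both identify the \ST connection with the flat left-invariant connection to conclude that $c$ is the (closed, coclosed) Cartan $3$-form.

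The genuine difference is in how normality is established. The paper works intrinsically on $\g_\bC$: it writes down $\vp$ explicitly and checks $S=0$ by a direct case-by-case computation over pairs $\g_\alpha\times\g_\beta$, $\g_\alpha\times\lt$, $\lt\times\lt$. You instead pass to $G\times S^1$, invoke Samelson's left-invariant complex structure (normalized so that $J\partial_t=-\xi$), and then read off normality from the classical equivalence between integrability of the induced $J$ on $N\times\bR$ and vanishing of the Sasaki--Hatakeyama tensor. Your route is shorter and more conceptual, since it delegates the case analysis to a known theorem; the paper's route is more self-contained and makes the dependence of the construction on the choice of almost contact structure on $\lt_\bR$ explicit (which they exploit in Remark~\ref{rem:SST-grp}(i)). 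Either way, the orthogonality of $J$ with respect to the bi-invariant metric is the one point that deserves a line of justification --- it holds because root spaces for distinct $\pm\alpha$ are orthogonal and the induced rotation on each real root $2$-plane is an isometry for the (negative of the) Killing form.
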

\begin{proof}
Let \( G \) be as in the statement of the theorem. Decompose its complexified Lie algebra as
\[ \g_\bC=\lt\oplus\bigoplus_\alpha \g_\alpha, \]
with  \( \alpha \) ranging in the space of roots, on which we fix an ordering. Now let \( \sigma \) be the real structure given by  conjugation. By construction, \( \sigma(\lt )\subset\lt \) and \( \sigma(\g_\alpha)\subset \g_{-\alpha} \).

Fix a positive definite inner product \( g \) on \( \g=Z(\g)\oplus [\g,\g] \) that reflects the splitting and extends the negative of the Killing form. Then \( \lt \) is orthogonal to each \( \g_{\alpha} \). Pick an almost contact metric structure on \( \lt_\bR \), compatible with the restriction of \( g \). By extending \( \bC \)-linearly, we obtain an endomorphism \( \vp \colon\lt\to\lt \) that commutes with \( \sigma \). This is now extended to a \( \bC \)-linear morphism \( \vp \colon\,\g_\bC\to \g_\bC \) via \[ \vp (\lt)\subset \lt, \quad \vp |_{\g_\alpha}=\begin{cases} i\id, \quad \alpha>0 \\ -i\id, \quad \alpha<0\end{cases}; \]
in particular, note that this extension also commutes with \( \sigma \).

For \( \Lel A\in \g_\alpha \), \( \Lel B \in\g_\beta \), we compute \( S(\Lel A,\Lel B) \) case-by-case as follows:
if \( \alpha,\beta>0 \) then
\begin{equation*}
[\vp(\Lel A),\vp (\Lel B)]-[\Lel A,\Lel B]-\vp [\vp(\Lel A),\Lel B]-\vp [\Lel A,\vp(\Lel B)]=-2[\Lel A,\Lel B]-2\vp [iA\Lel ,\Lel B]=0; 
\end{equation*}
if \( \alpha>0>\beta \) then
\begin{equation*}
[\vp(\Lel A),\vp(\Lel B)]-[\Lel A,\Lel B]-\vp [\vp(\Lel A),\Lel B]-\vp [\Lel A,\vp(\Lel B)]=0;
\end{equation*}
if \( 0>\alpha,\beta \) then
\begin{equation*}
[\vp(\Lel A),\vp(\Lel B)]-[\Lel A,\Lel B]-\vp [\vp(\Lel A),\Lel B]-\vp [\Lel A,\vp(\Lel B)]=-2[\Lel A,\Lel B]+2\vp [i\Lel A,\Lel B]=0.
\end{equation*}
For \( \Lel A\in \g_\alpha \), \( \Lel H\in\lt \), we have: if \( \alpha>0 \) then 
\begin{equation*}
\begin{split}
   S(\Lel H,\Lel A)&=[\vp(\Lel H),\vp(\Lel A)]-[\Lel H,\Lel A]-\vp [\vp(\Lel H),\Lel A]-\vp [\Lel H,\vp(\Lel A)]\\
   &=i \alpha(\vp(\Lel H))\Lel A - \alpha(\Lel H)\Lel A- \alpha(\vp (\Lel H))\vp(\Lel A)+\alpha(\Lel H) \Lel A=0;
 \end{split}
 \end{equation*}
 if \( \alpha<0 \) then
  \begin{equation*}
  \begin{split} 
   S(\Lel H,\Lel A)&=[\vp(\Lel H),\vp(\Lel A)]-[H\Lel ,\Lel A]-\vp [\vp(\Lel H),\Lel A]-\vp [\Lel H,\vp(\Lel A)]\\
   &=-i \alpha(\vp(\Lel H))\Lel A - \alpha(\Lel H)\Lel A- \alpha(\vp(\Lel H))\vp(\Lel A)+\alpha(\Lel H) \Lel A=0. 
\end{split}
\end{equation*}
Finally, \( S(\Lel H,\Lel K)=0 \) holds trivially, by construction, for \( \Lel H,\Lel K\in\lt \).

In conclusion, we have constructed a normal almost contact metric structure \( (g,\vp,\eta,\xi) \). In order to see that this structure is actually \ST, observe that the metric \( g \) is \( \ad(\g) \)-invariant (whereas \( \vp \), \( \xi \) and \( \eta \) are only  \( \ad(\lt) \)-invariant). In particular, this implies \(\Ld_\xi g=0\), meaning \( \xi \) is Killing, as required.

Moreover, by uniqueness (cf. Remark \ref{rem:ST-KT-comp}), the \ST connection must be the canonical connection on \( G \) defined via \( \NB_{\Lel X}\Lel Y=0, \quad \Lel X,\Lel Y\in\g \); clearly, \( \NB \) is an almost contact metric connection and the associated torsion \( c \) is proportional to the closed \( 3 \)-form \( (X,Y,Z)\mapsto g([X,Y],Z) \).
\end{proof}

\begin{remark}
\label{rem:SST-grp}
The proof of the above theorem has a number of consequences. Most notably, we remark:
\begin{compactenum}
\item generally one obtains many inequivalent \ST structures on each Lie group \( G \); this is due to the flexibility in the choice of almost contact metric structure on \( \lt_\bR \).   
\item in addition to being closed, the torsion \( 3 \)-form \( c \) is coclosed. Consequently, these group examples satisfy the second equation of \eqref{eq:field-type-II}.
\item as the \ST connection associated with each of the above \ST structures is flat, the first condition appearing in \eqref{eq:field-type-II}, \( \Ric^\NB=0 \), is obviously satisfied. Moreover, the vanishing of the \ST Ricci form implies a further reduction of the restricted holonomy group, \( \Hol(\NB)\subseteq 1\times\SU(k) \).  
\end{compactenum}
\end{remark}

For many interesting Lie group examples, e.g., the non-Abelian nilpotent ones, the \ST connection is not flat. However, it is still fairly easy to compute \( \NB \) in an efficient way. In order to do this, one uses the familiar relationship between \( d \) and \( \LC \) which, in terms of the isomorphism \( \Phi\colon\,\g\otimes\Lambda^2\g\to\Lambda^2\g\otimes\g \) given via the inclusion followed by wedging, can be put in the form  \( d=\Phi(\LC) \) (cf. \cite[Lemma 3.1]{Salamon:tour}). Regarded as an element in \(  \g^*\otimes\Lambda^2\g^* \), one can then express the \ST connection as \( \NB=\Phi^{-1}(d)+c\slash2 \), where 
\begin{equation*}
 2\,\Phi^{-1}((\Lel e^j\wedge \Lel e^k)\otimes \Lel e^i)=-\Lel e^i\otimes (\Lel e^j\wedge\Lel e^k)+\Lel e^k\otimes(\Lel e^i\wedge \Lel e^j)+\Lel e^j\otimes(\Lel e^k\wedge\Lel e^i),
\end{equation*}
with respect to a chosen \( g \) orthonormal basis \( \{ \Lel e^i \} \) of \( \g^* \). Example \ref{ex:STconn} illustrates the use of this formula.

\subsection{The Lee 1-form}

In \KT geometry, the so-called Lee \( 1 \)-form plays an important role (see, e.g., \cite{Fino-al:SKT}). Almost contact metric geometry also operates with the notion of a Lee form (cf. \cite{Chinea-al:submer}); generally this \( 1 \)-form has a component proportional to \( \eta \), but as in \cite[Section 5]{Friedrich-I:contact}, things can be phrased more naturally if we disregard this term. More precisely, by analogy with the \KT case (see, e.g., \cite{Ivanov-P:vanishing-string}), we define the \emph{Lee \( 1 \)-form \( \vartheta \) of an \ST manifold \( (N^n,g,\xi,\eta,\vp) \)} by \[ \vartheta(X):=-\frac12\sum_{i=1}^nc(\vp(X), E_i,\vp( E_i)),\] where \(  E_1,\ldots, E_n \) is a (local) orthonormal frame of  \( (N,g) \). Independence on the choice of frame follows from the rightmost hand side of the following  expressions:

\begin{proposition}
\label{prop:Lee-form}
On an \ST manifold, the Lee \( 1 \)-form is given by \[ \vartheta(X)=\frac12\sum_{i=1}^ndF(X, E_i,\vp(E_i))=-(d^*F\circ\vp)( X). \]
\end{proposition}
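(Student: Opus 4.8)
The plan is to prove the two equalities in turn. Throughout I would use the standard almost contact metric identities $\eta\circ\vp=0$ and the $g$-skew-adjointness of $\vp$ (which makes $F$ a genuine $2$-form and $\LC\vp$ again skew-adjoint), together with $\vp^2=-\id+\eta\otimes\xi$; beyond these, the input is $\xi\hook d\eta=0$ (true on any normal almost contact metric manifold) and, from the \ST hypothesis, $\xi\hook dF=0$ (Proposition~\ref{prop:ST-charac-deriv}) together with the fact that $\xi$ is a unit Killing field, whence $\operatorname{div}\xi=0$ and $\LC_\xi\xi=0$.

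\emph{First equality.} I would substitute $c=\eta\wedge d\eta+d^\vp F$ into $\vartheta(X)=-\tfrac12\sum_i c(\vp(X),E_i,\vp(E_i))$. Expanding $(\eta\wedge d\eta)(\vp(X),E_i,\vp(E_i))$ and summing over $i$, the terms with $\eta(\vp(X))$ or $\eta(\vp(E_i))$ vanish by $\eta\circ\vp=0$, and the surviving term equals $-d\eta(\vp(X),\vp(\xi))=0$ after using $\sum_i\eta(E_i)E_i=\xi$; so the vertical part of $c$ does not contribute. For the horizontal part, $d^\vp F(\vp(X),E_i,\vp(E_i))=dF(\vp^2(X),\vp(E_i),\vp^2(E_i))$; plugging in $\vp^2=-\id+\eta\otimes\xi$ and deleting every term with a $\xi$-slot by $\xi\hook dF=0$ leaves $dF(X,\vp(E_i),E_i)=-dF(X,E_i,\vp(E_i))$, and hence $\vartheta(X)=\tfrac12\sum_i dF(X,E_i,\vp(E_i))$.

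\emph{Second equality.} I would expand both sides through $\LC$, using $dF(A,B,C)=(\LC_AF)(B,C)-(\LC_BF)(A,C)+(\LC_CF)(A,B)$, the identity $(\LC_AF)(B,C)=g((\LC_A\vp)(B),C)$, and $d^*F(Y)=-\sum_i(\LC_{E_i}F)(E_i,Y)$. Writing $\vp(E_i)=\sum_j F(E_i,E_j)E_j$ and contracting, the ``$\LC_XF$''-term collapses to $-\operatorname{tr}(\vp\,\LC_X\vp)$, which is zero since $\operatorname{tr}\LC_X(\vp^2)=0$; the remaining two terms coincide after relabelling indices, so (invoking skew-adjointness of $\LC\vp$) $\sum_i dF(X,E_i,\vp(E_i))=2\sum_i g\bigl(X,(\LC_{E_i}\vp)(\vp(E_i))\bigr)$. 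On the other hand $-2\,d^*F(\vp(X))=2\sum_i(\LC_{E_i}F)(E_i,\vp(X))=-2\sum_i g\bigl(X,\vp((\LC_{E_i}\vp)(E_i))\bigr)$, after moving $\vp$ across $g$. Thus the identity to be shown amounts to $\sum_i\bigl[(\LC_{E_i}\vp)(\vp(E_i))+\vp\bigl((\LC_{E_i}\vp)(E_i)\bigr)\bigr]=0$; the summand is exactly $(\LC_{E_i}\vp^2)(E_i)$, and summing and substituting $\vp^2=-\id+\eta\otimes\xi$ gives $(\operatorname{div}\xi)\,\xi+\LC_\xi\xi$, which vanishes on an \ST manifold.

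The main obstacle is this second equality, which is the odd-dimensional avatar of the Hermitian fact that the Lee form equals $-d^*\omega\circ J$; it is here that the \ST assumption genuinely enters, since the two a priori distinct contractions differ precisely by $(\operatorname{div}\xi)\,\xi+\LC_\xi\xi$, a term killed exactly by $\xi$ being unit and Killing (on a Kenmotsu manifold, say, it would not vanish). One must also be somewhat careful with the trace computations: the frame-derivative contributions disappear automatically because $dF$ and $d^*F$ are written via the torsion-free connection, so only the tensorial parts survive, and skew-adjointness of $\LC\vp$ (and of $\vp$) is invoked repeatedly to shuttle $\vp$ through $g$.
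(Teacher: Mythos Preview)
Your argument is correct. For the first equality you do essentially what the paper does, only frame-independently: the paper picks an adapted orthonormal coframe and observes directly that the \( \eta\wedge d\eta \) contribution vanishes and that \( dF(\vp^2 X,\vp E_i,\vp^2 E_i)=-dF(X,E_i,\vp E_i) \) on the horizontal part, implicitly using \( \xi\hook dF=0 \); you make the same reductions explicit via \( \eta\circ\vp=0 \), \( \sum_i\eta(E_i)E_i=\xi \) and \( \vp^2=-\id+\eta\otimes\xi \).

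For the second equality the routes differ. The paper does not compare the middle expression with the right-hand side at all; instead it links \( d^*F \) directly back to the \emph{definition} of \( \vartheta \) through the Friedrich--Ivanov identity \( g((\LC_X\vp)Y,Z)=-\tfrac12\bigl(c(X,Y,\vp Z)+c(X,\vp Y,Z)\bigr) \), which gives \( (d^*F)(X)=\tfrac12\sum_i c(X,E_i,\vp E_i) \) and hence \( -(d^*F\circ\vp)(X)=\vartheta(X) \) in one stroke. Your approach, by contrast, compares the two trace expressions \( \tfrac12\sum_i dF(X,E_i,\vp E_i) \) and \( -(d^*F)(\vp X) \) directly and shows their difference is \( g\bigl(X,(\operatorname{div}\xi)\xi+\LC_\xi\xi\bigr) \). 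The paper's argument is shorter and exploits the characteristic connection already set up in Proposition~\ref{prop:STcharac}; your argument is more self-contained, avoids the Friedrich--Ivanov formula entirely, and has the pleasant feature of pinpointing exactly which piece of the \ST hypothesis (the Killing condition, via \( \operatorname{div}\xi=0 \) and \( \LC_\xi\xi=0 \)) is responsible for the identity---your aside about Kenmotsu manifolds makes this vivid.
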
 
\begin{proof}
Firstly, we observe that \[ \sum_{i=1}^n\eta\wedge d\eta(\vp(X),E_i,\vp(E_i))=0; \] we are working in an orthonormal frame \( E_i \) adapted to the structure, meaning the dual coframe \( e^i \) satisfies \( F=e^1\wedge e^{2}+\cdots+e^{n-2}\wedge e^{n-1}; \) we can also assume that \( X \) is one of the \( E_i \). Consequently, we get the first equality:
\begin{equation*}
\begin{split}
-\frac12\sum_{i=1}^nc(\vp(X), E_i,\vp( E_i))&=-\frac12\sum_{i=1}^ndF(\vp^2(X), \vp(E_i),\vp^2( E_i))\\
&=\frac12\sum_{i=1}^ndF(X, E_i,\vp(E_i)).
\end{split}
\end{equation*}

Next, we apply the formulae (for the second see \cite{Friedrich-I:spinors}):
\begin{gather*}
(\LC_XF)(Y,Z)=g((\LC_X\vp)Y,Z)\\
g((\LC_X\vp)Y,Z)=-\frac12\left(c(X,Y,\vp(Z))+c(X,\vp(Y),Z)\right)
\end{gather*}
together with the definition of the codifferential to get:
\begin{equation*}
\begin{split}
(d^*F)(X)&=-\sum_{i=1}^n(\LC_{E_i}F)(E_i,X)=-\sum_{i=1}^ng((\LC_{E_i}\vp)E_i,X)\\
   &=\frac12\sum_{i=1}^nc(X,E_i,\vp(E_i)).
   \end{split}
\end{equation*}
From this computation the second equality of the proposition readily follows.
\end{proof}

We shall say that an \ST manifold is \emph{balanced} if the associated Lee \( 1 \)-form is zero, \( \vartheta=0 \). Clearly, any \ST manifold which is quasi-Sasaki is balanced. In low dimensions, the converse is also true. Indeed, \emph{any} \( 3 \)-dimensional \ST manifold is both balanced and quasi-Sasaki; \( dF \) vanishes identically in this case. In dimension \( 5 \), a characterisation follows by using:

\begin{lemma}
\label{lemma:lee-5-ST}
On a \( 5 \)-dimensional \ST manifold, the following relation holds 
\begin{equation}
\label{eq:lee-5-ST}
 dF=\vartheta\wedge F.
 \end{equation}
\end{lemma}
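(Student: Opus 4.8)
The plan is to exploit the type decomposition of $dF$ together with dimension counting. On a $5$-dimensional almost contact metric manifold, the bundle $\Lambda^3 T^*N$ splits according to how many factors are annihilated by $\eta$ and how the horizontal part (where $\vp$ restricts to a genuine almost complex structure on the $4$-dimensional distribution $\ker\eta$) decomposes by $\vp$-type. By Remark~\ref{rem:ST-KT-comp}(iii), on an \ST manifold $dF$ has $\vp$-type $(2,1)+(1,2)$, and $\xi\hook dF=0$ by Proposition~\ref{prop:ST-charac-deriv}; hence $dF$ is a horizontal $3$-form lying in the $(2,1)+(1,2)$ part. On a $4$-dimensional Hermitian vector space the space of real $(2,1)+(1,2)$ forms is $4$-dimensional, and it is precisely the image of the Lefschetz-type map $\alpha\mapsto\alpha\wedge F_0$ from horizontal $1$-forms (where $F_0$ is the restriction of $F$ to $\ker\eta$), because this map is injective on the $4$-dimensional space of horizontal $1$-forms and lands in the $(2,1)+(1,2)$ part. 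So pointwise there is a unique horizontal $1$-form $\beta$ with $dF=\beta\wedge F$.

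First I would set up an adapted orthonormal coframe $e^1,\dots,e^5$ with $\eta=e^5$, $F=e^{12}+e^{34}$, and $\vp$ acting as $e^1\mapsto e^2\mapsto -e^1$, $e^3\mapsto e^4\mapsto -e^3$ on the horizontal part. Then I would verify the two linear-algebra facts above: that $\{e^i\wedge F : i=1,\dots,4\}$ is linearly independent, and that it spans the space of horizontal $3$-forms of $\vp$-type $(2,1)+(1,2)$ (equivalently, those $3$-forms $\gamma$ with $\gamma = \gamma(\vp\cdot,\vp\cdot,\cdot)+\dots$ in the sense of that remark). A clean way is to note $*$ restricted to the horizontal $4$-space sends horizontal $3$-forms to horizontal $1$-forms, and $*(e^i\wedge F_0)$ is (up to sign) $\vp(e^i)$ or $e^i$; combined with the fact that $(2,1)+(1,2)$ horizontal forms are exactly the horizontal $3$-forms orthogonal to the $(3,0)+(0,3)$ part, which is $2$-dimensional, one gets the count $6-2 = 4$. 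This identifies the unique $\beta$.

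Next I would pin down $\beta = \vartheta$ by contracting the identity $dF = \beta\wedge F$ appropriately. Using Proposition~\ref{prop:Lee-form}, $\vartheta(X)=\tfrac12\sum_i dF(X,E_i,\vp(E_i))$, and feeding $dF=\beta\wedge F$ into the right-hand side gives, after a short computation with $F=e^{12}+e^{34}$, exactly $\beta(X)$ for horizontal $X$ (the trace picks out $\beta$ and kills $F$-trace terms because $\sum_i F(E_i,\vp(E_i))$ combines with the wedge structure to leave a single copy of $\beta$); and both $\vartheta$ and $\beta$ annihilate $\xi$, so they agree. Hence $dF=\vartheta\wedge F$.

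The main obstacle is the linear-algebra step: correctly identifying that the horizontal $(2,1)+(1,2)$ forms in dimension $4$ coincide with $\{\alpha\wedge F_0 : \alpha \text{ horizontal } 1\text{-form}\}$, and getting the normalisation constant right in the trace computation so that the coefficient is exactly $1$ rather than some multiple. Everything else — the adapted frame, the vanishing $\xi\hook dF=0$, the $\vp$-type of $dF$ — is supplied by the earlier results. I expect the bookkeeping in the contraction to be the only place where signs and factors of $2$ need genuine care.
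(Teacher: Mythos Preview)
Your argument is correct in outline and would yield a valid proof, but it differs in execution from the paper's. The paper proceeds via the Hodge star: from $\vartheta=-d^{*}F\circ\vp$ and $*F=F\wedge\eta$ one gets $\vartheta(\vp\cdot)\equiv *(dF\wedge\eta)\pmod\eta$, and the identity $dF=\vartheta\wedge F$ then drops out of the standard $5$-dimensional relation between $*$ and the Lefschetz map $\alpha\mapsto\alpha\wedge F\wedge\eta$. Your route---showing the Lefschetz map $\alpha\mapsto\alpha\wedge F$ is an isomorphism from horizontal $1$-forms to horizontal $3$-forms and then identifying the preimage of $dF$ by tracing---is more explicit and arguably more elementary; the paper's version is slicker but leaves the last implication to the reader.

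Two small points. First, your dimension count is garbled: on the $4$-dimensional horizontal space the real $3$-forms have dimension $\binom{4}{3}=4$, and in complex dimension $2$ there are \emph{no} $(3,0)$ or $(0,3)$ forms at all, so the ``$6-2$'' should be ``$4-0$''. This also means invoking Remark~\ref{rem:ST-KT-comp}(iii) is unnecessary here: once $\xi\hook dF=0$ forces $dF$ to be horizontal, \emph{every} horizontal $3$-form is automatically of type $(2,1)+(1,2)$, so the Lefschetz isomorphism alone gives you $dF=\beta\wedge F$. Second, your trace computation is right and the constant does come out to $1$: in an adapted frame $\sum_iF(E_i,\vp(E_i))=4$, and the two cross terms each contribute $-\tfrac12\beta(X)$, leaving $\vartheta(X)=2\beta(X)-\tfrac12\beta(X)-\tfrac12\beta(X)=\beta(X)$ for horizontal $X$; both $\vartheta$ and $\beta$ kill $\xi$, so $\beta=\vartheta$.
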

\begin{proof}
Firstly we compute
\[\vartheta(\vp\cdot)=d^\Hodge F \pmod \eta=\Hodge(dF\wedge\eta)\pmod \eta \]
which implies
\[ \vartheta\wedge F=dF, \]
as required.
\end{proof}

As an immediate consequence, we have:

\begin{proposition}
\label{prop:balanced-qS-5dim}
A \( 5 \)-dimensional \ST manifold is balanced if and only if it is quasi-Sasaki.
\qed
\end{proposition}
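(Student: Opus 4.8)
The plan is to derive Proposition~\ref{prop:balanced-qS-5dim} directly from Lemma~\ref{lemma:lee-5-ST}, exploiting the fact that on any \ST manifold the condition \( \xi\hook dF=0 \) already holds (Proposition~\ref{prop:ST-charac-deriv}). One implication is trivial: if the manifold is quasi-Sasaki then \( dF=0 \), and hence, by Proposition~\ref{prop:Lee-form}, \( \vartheta=-(d^*F\circ\vp)=0 \), so the structure is balanced. (Alternatively, balancedness of quasi-Sasaki \ST manifolds was already observed in the text preceding the lemma.) The substance is the converse.

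For the converse, suppose the \( 5 \)-dimensional \ST manifold is balanced, i.e.\ \( \vartheta=0 \). Then Lemma~\ref{lemma:lee-5-ST} gives \( dF=\vartheta\wedge F=0 \) immediately. Since quasi-Sasaki by definition means normal almost contact metric with \( dF=0 \), and normality is part of the \ST hypothesis, we conclude the structure is quasi-Sasaki. So the entire proof is the two-line chain \( \vartheta=0 \iff dF=\vartheta\wedge F=0 \iff \text{quasi-Sasaki} \), using the lemma for the middle equivalence and the definition of quasi-Sasaki for the outer one; this is why the statement is flagged as an \enquote{immediate consequence} and carries a \texttt{\textbackslash qed} in the statement itself.

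The only place one must be slightly careful is the backward direction of the middle step: \( \vartheta\wedge F=0 \) does not by itself force \( \vartheta=0 \) for a general \( 1 \)-form and \( 2 \)-form, so I should not try to run the equivalence in that direction. Instead the logical flow is: quasi-Sasaki \( \Rightarrow dF=0 \Rightarrow \vartheta=0 \) (via Proposition~\ref{prop:Lee-form}), and balanced \( \Rightarrow \vartheta=0 \Rightarrow dF=0 \) (via Lemma~\ref{lemma:lee-5-ST}) \( \Rightarrow \) quasi-Sasaki. Thus there is essentially no obstacle here; the real content was already absorbed into Lemma~\ref{lemma:lee-5-ST}, whose proof rests on the \( 5 \)-dimensional Hodge-star identity \( \vartheta(\vp\cdot)=\Hodge(dF\wedge\eta)\pmod\eta \) together with \( \xi\hook dF=0 \). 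Accordingly I would present the proof as a single short paragraph, or simply let the \texttt{\textbackslash qed} in the statement stand, noting only the two one-line implications above.
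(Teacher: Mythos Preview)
Your proposal is correct and follows exactly the approach intended by the paper: the proposition is stated with a \texttt{\textbackslash qed} precisely because it is an immediate consequence of Lemma~\ref{lemma:lee-5-ST}, with the quasi-Sasaki $\Rightarrow$ balanced direction already noted in the text and the converse following from $dF=\vartheta\wedge F=0$. Your caution about not reversing the step $\vartheta\wedge F=0\Rightarrow\vartheta=0$ is in fact unnecessary here (in dimension $\geqslant5$ wedging with $F$ is injective on $1$-forms, as the paper later remarks in the proof of Proposition~\ref{prop:Sasak-to-non-Sasak}), but your chosen logical flow is perfectly sound and matches the paper's.
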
 

As the next example shows, things change from dimension \( 7 \).

\begin{example}
\label{ex:STconn}
Inspired by \cite{Dotti-F:ahcplx}, let us consider the connected nilpotent Lie group 
\( H \) such that the dual  of its Lie algebra has a basis \( \{e^1,\dotsc, e^7\} \) satisfying
\begin{gather*}
d\Lel e^1=0=d\Lel e^2=d\Lel e^3=d\Lel e^4,\quad
d\Lel e^5=-\Lel e^1\wedge\Lel e^2+\Lel e^3\wedge\Lel e^4,\\
d\Lel e^6=-\Lel e^1\wedge\Lel e^3-\Lel e^2\wedge\Lel e^4,\quad
d\Lel e^7=-\Lel e^1\wedge\Lel e^4+\Lel e^2\wedge\Lel e^3.
\end{gather*}
On \( \lie h \) we define an \ST structure by declaring this basis to be orthonormal, \( \eta=\Lel e^7 \), and \[ \Lel e^1\circ\vp=- \Lel e^2,\, \Lel e^3\circ\vp= -\Lel e^4,\, \Lel e^6\circ\vp= -\Lel e^7,\, \Lel e^5\circ\vp= 0. \] Consequently, \( F=\Lel e^1\wedge\Lel e^2+\Lel e^3\wedge\Lel e^4+\Lel e^6\wedge \Lel e^7 \), and \[ dF=\Lel e^6\wedge(\Lel e^1\wedge \Lel e^4-\Lel e^2\wedge \Lel e^3)-\Lel e^7\wedge(\Lel e^1\wedge \Lel e^3+\Lel e^2\wedge \Lel e^4). \] The latter expression implies \( dF(\Lel X,\Lel E_j,\vp(\Lel E_j))=0 \) for all \( 1\leqslant j\leqslant 7 \), so that the \ST structure is balanced.

The torsion \( 3 \)-form associated with the above \ST structure is determined via 
\begin{equation*}
\begin{split}
 c=-\Lel e^1\wedge\Lel e^2\wedge\Lel e^5&-\Lel e^1\wedge\Lel e^3\wedge\Lel e^6-\Lel e^1\wedge\Lel e^4\wedge\Lel e^7\\
 &-\Lel e^2\wedge\Lel e^4\wedge\Lel e^6+\Lel e^2\wedge\Lel e^3\wedge\Lel e^7+\Lel e^3\wedge\Lel e^4\wedge\Lel e^5,
 \end{split}
 \end{equation*}
and we can then find an explicit expression for the \ST connection by using the discussion in the last part of section \ref{sec:Lie-grp-ST}:
\begin{equation*}
\begin{array}{lll}
\NB_{\Lel E_5}\Lel E_1=-\Lel E_2, & \NB_{\Lel E_6}\Lel E_1=-\Lel E_3, & \NB_{\Lel E_7}\Lel E_1=-\Lel E_4,\\
\NB_{\Lel E_5}\Lel E_2=\Lel E_1, & \NB_{\Lel E_6}\Lel E_2=-\Lel E_4, & \NB_{\Lel E_7}\Lel E_2=\Lel E_3,\\
\NB_{\Lel E_5}\Lel E_3=\Lel E_4, & \NB_{\Lel E_6}\Lel E_3=\Lel E_1, & \NB_{\Lel E_7}\Lel E_3=-\Lel E_2,\\
\NB_{\Lel E_5}\Lel E_4=-\Lel E_3, & \NB_{\Lel E_6}\Lel E_4=\Lel E_2, & \NB_{\Lel E_7}\Lel E_4=\Lel E_1.
\end{array}
\end{equation*}
We remark that this \ST connection coincides with the Bismut connection of the \textsc{hkt} structure on \( \bR\times H \) found in \cite[p. 560]{Dotti-F:ahcplx}. 
\end{example}

\subsection{Mappings of ST manifolds}
\label{sec:conf-trans}

It is reassuring to observe that \ST manifolds behave well with respect to certain types of mappings.

\paragraph{Transversal conformal transformations}
The notion of \emph{transversal conformal transformations} (see \cite[Section 5]{Friedrich-I:contact}) fits well into the framework of \ST geometry. Given a basic function \( f \) and an \ST structure \( (g,\xi,\eta,\vp) \) on \( N^n \), we define a new almost contact metric structure via the expressions:
\begin{equation*}
\wt \vp:=\vp,\,\wt\xi:=\xi,\,\wt\eta:=\eta,\,\wt g:=e^{2f}g+(1-e^{2f})\eta^2. 
\end{equation*}
The essential observation is now the following specialisation of \cite[Proposition 5.1, Proposition 5.2]{Friedrich-I:contact}:
\begin{proposition}
The class of \ST manifolds is invariant under transversal conformal transformations. Moreover, the \( 3 \)-form torsion of the transformed \ST structure takes the form
\begin{gather*}
\wt c=c+(e^{2f}-1)d^\vp F+2e^{2f}d^\vp f\wedge F,
\end{gather*}
and the transformed Lee \( 1 \)-form is given by \[ \wt \vartheta=\vartheta+(n-3)df.\] 
\end{proposition}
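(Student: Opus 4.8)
The strategy is to track the effect of the transversal conformal change on each ingredient appearing in the torsion formula $c=\eta\wedge d\eta+d^\vp F$, and then to identify the Lee form from Proposition~\ref{prop:Lee-form} for the new structure. First I would note that $\wt\xi=\xi$, $\wt\eta=\eta$, so $\wt\eta\wedge d\wt\eta=\eta\wedge d\eta$ is unchanged; thus the only contribution to $\wt c-c$ comes from the horizontal part $d^{\wt\vp}\wt F$. Next I would compute $\wt F=\wt g(\wt\vp\cdot,\cdot)$: since $\wt\vp=\vp$ annihilates $\xi$ and its image is orthogonal to $\xi$ (with respect to both $g$ and $\wt g$), the $\eta^2$-correction term in $\wt g$ contributes nothing, and $\wt g=e^{2f}g$ on the orthogonal complement of $\xi$, so $\wt F=e^{2f}F$. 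Then $d\wt F=e^{2f}(2df\wedge F+dF)$, and applying $(\cdot)(\vp\cdot,\vp\cdot,\vp\cdot)$ gives $d^{\wt\vp}\wt F=e^{2f}(2d^\vp f\wedge d^\vp F'\ldots)$ — more precisely, since $\vp$ kills $\xi$, one checks $d^\vp(df\wedge F)=d^\vp f\wedge d^\vp F + (\text{term from }\xi\text{-direction that vanishes})$; here I would use that $F(\vp\cdot,\vp\cdot)=F$ on horizontal vectors together with $f$ being basic (so $df$ is horizontal and $d^\vp f=df\circ\vp$, $df\circ\vp\circ\vp=-df$). Care is needed with the basic function hypothesis: $\Ld_\xi f=0$ ensures $df$ has no $\eta$-component, which is what makes the $\eta^2$-term in $\wt g$ and the various $\xi$-contractions disappear. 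Assembling the pieces, $\wt c = \eta\wedge d\eta + e^{2f}d^\vp F + 2e^{2f}\,d^\vp f\wedge F = c + (e^{2f}-1)d^\vp F + 2e^{2f}\,d^\vp f\wedge F$.

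For invariance of the \ST class, I would invoke \cite[Proposition 5.1]{Friedrich-I:contact} (or argue directly): $\wt\xi=\xi$ is still Killing for $\wt g$ because $\Ld_\xi\wt g = e^{2f}\Ld_\xi g + 2e^{2f}(\Ld_\xi f)g + \ldots$, and each term vanishes using $\Ld_\xi g=0$ and $\Ld_\xi f=0$; normality of $(\wt g,\xi,\eta,\vp)$ is preserved since the Sasaki--Hatakeyama tensor \eqref{eq:Sasaki-Htensor} depends only on $\vp$, $\eta$ (unchanged) and Lie brackets, not on $g$. Hence $(\wt g,\xi,\eta,\vp)$ is again \ST by Definition~\ref{def:ST}.

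Finally, for the Lee form I would use Proposition~\ref{prop:Lee-form}: $\wt\vartheta(X)=\tfrac12\sum_i d\wt F(X,\wt E_i,\wt\vp(\wt E_i))$, where $\wt E_i$ is a $\wt g$-orthonormal frame. Choosing $\wt E_i=e^{-f}E_i$ for $i$ horizontal and $\wt E_i=E_i=\xi$ for the Reeb direction, and substituting $d\wt F=e^{2f}(2df\wedge F+dF)$, the $e^{2f}$ cancels against the two factors of $e^{-f}$, leaving $\wt\vartheta(X)=\vartheta(X)+\sum_i' (df\wedge F)(X,E_i,\vp(E_i))$, where the prime indicates summation over the horizontal frame. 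A direct count of the $(n-3)$ horizontal basis directions in which $F(E_i,\vp(E_i))=1$, together with the two terms coming from $X$ being paired against $E_i$ or $\vp(E_i)$, yields $\sum_i'(df\wedge F)(X,E_i,\vp(E_i))=(n-3)df(X)$ for $X$ horizontal, while for $X=\xi$ both sides vanish since $df$ is basic. Therefore $\wt\vartheta=\vartheta+(n-3)df$, as claimed. The main obstacle is the bookkeeping in the two combinatorial reductions — carefully isolating which contractions against $\xi$ vanish by the basic-function hypothesis, and correctly counting the multiplicity $(n-3)$ in the Lee-form sum — rather than any conceptual difficulty; once one commits to an adapted orthonormal frame with $F=e^1\wedge e^2+\cdots+e^{n-2}\wedge e^{n-1}$ as in the proof of Proposition~\ref{prop:Lee-form}, everything is a direct if slightly tedious verification.
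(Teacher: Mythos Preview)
Your proposal is correct and follows essentially the same route as the paper: normality is preserved because the Sasaki--Hatakeyama tensor depends only on \(\vp,\eta\); \(\wt F=e^{2f}F\) gives the torsion formula via \(F(\vp\cdot,\vp\cdot)=F\); and the Lee form computation is identical, using an adapted frame \(\wt E_i=e^{-f}E_i\) on \(\ker\eta\). The only minor difference is that the paper verifies the \ST condition by invoking Proposition~\ref{prop:ST-charac-deriv} (checking \(\wt\xi\hook d\wt F=\xi\hook(e^{2f}dF)=0\)) rather than checking the Killing condition for \(\wt g\) directly as you do; both are one-line observations.
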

\begin{proof}
The underlying almost contact structure is unchanged, so normality is preserved. In addition, since \( f \) is basic, we see that \[ \wt \xi\hook d\wt F=\xi\hook(e^{2f}dF)=0. \] So, by Proposition \ref{prop:ST-charac-deriv}, \( (\wt g,\wt\xi,\wt\eta,\wt \vp) \) defines an \ST structure on \( N \).  The expressions for \( \wt c \) follows from a straightforward computation which uses \( F(\vp\cdot,\vp\cdot)=F \). Finally, the expression for the transformed Lee form follows from the computation:
\begin{equation*}
\begin{split}
\wt\vartheta(X)&=\frac12\sum_{i=1}^nd\wt F(X,\wt E_i,\vp(\wt E_i))=\frac12\sum_{i=1}^{n}d(e^{2f} F)(X,\wt E_i,\vp(\wt E_i))\\
    &=\vartheta(X)+\sum_{i=1}^ne^{2f}(df\wedge F)(X,\wt E_i,\vp(\wt E_i))=\vartheta(X)+(n-3)df(X),
\end{split}
\end{equation*}
where the last equality follows by noting that we can assume \( \wt E_i \) is a frame adapted to the structure, as in the proof of Proposition \ref{prop:Lee-form}.
\end{proof}

In analogy with the Hermitian setting, we dub an \ST manifold \emph{conformally balanced} if the associated Lee \( 1 \)-form is exact. 

\begin{proposition}
An \ST structure \( (\wt g,\wt\xi,\wt\eta,\wt\vp) \) on \( N^n \), \( n\geqslant5\), is transversal conformal to a quasi-Sasaki structure if and only if it is conformally balanced and satisfies \[ d\wt F=\frac2{n-3}\wt\vartheta\wedge\wt F, \] and \( (\wt g,\wt\xi,\wt\eta,\wt\vp) \). 
\end{proposition}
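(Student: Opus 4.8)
The plan is to work entirely with the fundamental $2$-form $\wt F=\wt g(\wt\vp\,\cdot,\cdot)$ and the Lee form $\wt\vartheta$, exploiting the transformation rules recalled just above: a transversal conformal transformation by a basic function $h$ replaces $\wt F$ by $e^{2h}\wt F$ and $\wt\vartheta$ by $\wt\vartheta+(n-3)\,dh$, and keeps the structure \ST. Since, among \ST structures, being quasi-Sasaki is equivalent to closedness of the fundamental form (Proposition~\ref{prop:ST-charac-c}(iii), or directly: normal $+$ $\xi$ Killing $+$ closed fundamental form), the transformed structure is quasi-Sasaki exactly when
\begin{equation*}
0=d\bigl(e^{2h}\wt F\bigr)=e^{2h}\bigl(2\,dh\wedge\wt F+d\wt F\bigr),
\end{equation*}
i.e.\ when $d\wt F=-2\,dh\wedge\wt F$.

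For the ``only if'' direction I would argue as follows. If a transversal conformal transformation, necessarily by some basic function $h$, produces a quasi-Sasaki structure, then its fundamental form is closed, hence by Proposition~\ref{prop:Lee-form} its Lee form $\wt\vartheta+(n-3)\,dh$ vanishes. Thus $\wt\vartheta=-(n-3)\,dh$ is exact --- so the original structure is conformally balanced --- and feeding $dh=-\frac1{n-3}\wt\vartheta$ into the closedness condition above yields precisely $d\wt F=\frac2{n-3}\,\wt\vartheta\wedge\wt F$.

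For the converse, assume the structure is conformally balanced and satisfies $d\wt F=\frac2{n-3}\,\wt\vartheta\wedge\wt F$. Write $\wt\vartheta=d\phi$; since $\xi\hook d\wt F=0$ and $\Ld_\xi\wt F=0$ on an \ST manifold the form $\wt\vartheta$ is basic, so $\xi(\phi)=\wt\vartheta(\xi)=0$ and $\Ld_\xi\phi=0$, i.e.\ $\phi$ is basic. Set $h=-\frac1{n-3}\phi$; this is a legitimate conformal factor, the transformed structure is \ST with Lee form $\wt\vartheta+(n-3)\,dh=0$, and
\begin{equation*}
d\bigl(e^{2h}\wt F\bigr)=e^{2h}\bigl(2\,dh\wedge\wt F+d\wt F\bigr)=e^{2h}\bigl(-\tfrac2{n-3}\,\wt\vartheta\wedge\wt F+\tfrac2{n-3}\,\wt\vartheta\wedge\wt F\bigr)=0,
\end{equation*}
so the transformed structure has closed fundamental form and is therefore quasi-Sasaki.

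The argument is essentially forced once one observes that quasi-Sasaki implies balanced: the conformal factor reaching a quasi-Sasaki structure is then pinned down, up to an additive constant, by the requirement that it annihilate the Lee form, and the only genuine content is that this same factor also closes the fundamental form --- which is exactly the displayed identity. The mild care needed, and the closest thing to an obstacle, is the bookkeeping of composing and inverting transversal conformal transformations together with the check that the Lee potential $\phi$ may be taken basic so that $h$ is an admissible conformal factor; both follow at once from $\xi\hook d\wt F=0$ and $\Ld_\xi\wt F=0$. (For $n=5$ one recovers, via Lemma~\ref{lemma:lee-5-ST}, that the second condition is automatic and the statement collapses to Proposition~\ref{prop:balanced-qS-5dim}, a useful consistency check.)
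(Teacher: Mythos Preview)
Your proof is correct and follows essentially the same route as the paper's own argument: both directions hinge on the transformation rules $\wt F\mapsto e^{2h}\wt F$ and $\wt\vartheta\mapsto\wt\vartheta+(n-3)\,dh$, with quasi-Sasaki characterised by closedness of the fundamental form. The only cosmetic difference is that the paper argues the ``only if'' direction starting from the quasi-Sasaki structure and transforming \emph{to} the tilde structure (so their factor $f$ is your $-h$), whereas you run the transformation in the opposite direction; you also spell out why the Lee potential may be taken basic, a point the paper simply asserts.
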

\begin{proof}
Assume first that \( (\wt g,\wt\xi,\wt\eta,\wt\vp) \) is transversal conformal to a quasi-Sasaki structure \( (g,\xi,\eta,\vp) \) by the function \( f \). Clearly, \( \vartheta=0 \) so that \( \wt\vartheta=(n-3)df\) which implies that \( (\wt g,\wt\xi,\wt\eta,\wt\vp) \)  is conformally balanced and also \[ d\wt F=d(e^{2f}F)=2df\wedge \wt F=\frac2{n-3}\wt\vartheta\wedge \wt F. \]

Conversely, assume \( (\wt g,\wt\xi,\wt\eta,\wt\vp) \) satisfies the above relation together with the condition \( \wt\vartheta=dh \), for some basic function \( h \). Then the \ST structure obtained via a transversal conformal transformation by \( f=-h\slash{(n-3)} \) is quasi-Sasaki. 
\end{proof}

By specialising to dimension \( 5 \) and using Lemma~\ref{lemma:lee-5-ST}, we get (see also \cite[Proposition 5.4]{Friedrich-I:contact}):

\begin{corollary}
Any \( 5 \)-dimensional conformally balanced \ST manifold is transversal conformal to a quasi-Sasaki manifold.
\qed
\end{corollary}

The above arguments reveal a way of producing \ST manifolds that are not quasi-Sasaki:

\begin{proposition}
\label{prop:Sasak-to-non-Sasak}
In dimensions \( \geqslant5 \), any transversal conformal transformation of a connected quasi-Sasaki manifold by a non-constant function, gives rise to an \ST manifold that is not quasi-Sasaki.  
\end{proposition}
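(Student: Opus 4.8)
The plan is to argue via the Lee $1$-form, using that quasi-Sasaki \ST manifolds are automatically balanced while a genuine transversal conformal transformation changes the Lee form by a nonzero amount.

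First I would note that a quasi-Sasaki manifold is \ST: it is normal by definition, and since \( dF=0 \) one has \( \xi\hook dF=0 \) and \( \Ld_\xi F=d(\xi\hook F)=0 \) (as \( \xi\hook F=g(\vp(\xi),\cdot)=0 \)), so Proposition \ref{prop:ST-charac-deriv} applies. Then, by Proposition \ref{prop:Lee-form}, the Lee form is built from \( dF \), which vanishes here; hence every quasi-Sasaki \ST manifold is balanced, \( \vartheta=0 \). One can equally invoke Proposition \ref{prop:ST-charac-c}(iii), which gives \( c=\eta\wedge d\eta \), and observe that \( c(\vp(X),E_i,\vp(E_i))=\eta\wedge d\eta(\vp(X),E_i,\vp(E_i))=0 \) directly.

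Next, let \( (g,\xi,\eta,\vp) \) be a connected quasi-Sasaki manifold with \( n\geqslant 5 \) and let \( (\wt g,\wt\xi,\wt\eta,\wt\vp) \) be its transversal conformal transformation by a basic function \( f \). By the proposition on transversal conformal transformations above, the transformed structure is again \ST, with Lee form \( \wt\vartheta=\vartheta+(n-3)\,df=(n-3)\,df \). If \( f \) is non-constant then, since \( N \) is connected, \( df \) is not identically zero; and because \( n-3\geqslant 2 \), we get \( \wt\vartheta\neq 0 \), so \( (\wt g,\wt\xi,\wt\eta,\wt\vp) \) is not balanced. As every quasi-Sasaki \ST manifold is balanced, the transformed structure cannot be quasi-Sasaki.

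I do not expect a real obstacle here; the only delicate point is the last implication, where connectedness of \( N \) is needed to deduce \( df\not\equiv 0 \) from non-constancy of \( f \), and the hypothesis \( n\geqslant 5 \) is needed so that the factor \( n-3 \) does not annihilate \( df \) — in dimension \( 3 \) the statement is false, in agreement with the earlier remark that every \( 3 \)-dimensional \ST manifold is quasi-Sasaki. As a cross-check, one can bypass the Lee form entirely: since \( \wt F=e^{2f}F \) and \( dF=0 \), one has \( d\wt F=2e^{2f}\,df\wedge F \), and a short rank count shows that for the basic \( 1 \)-form \( df \) (which has no \( \eta \)-component) one has \( df\wedge F\neq 0 \) wherever \( df\neq 0 \) once \( n\geqslant 5 \); hence \( d\wt F\neq 0 \) and the transformed structure fails to be quasi-Sasaki.
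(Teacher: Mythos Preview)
Your argument is correct. Your primary route, via the Lee form, differs from the paper's proof, which proceeds directly: from \( \wt F=e^{2f}F \) and \( dF=0 \) one obtains \( d\wt F=2\,df\wedge\wt F \), and the injectivity of the wedging map \( \alpha\mapsto\alpha\wedge F_p \) on \( T_p^*N \) (which holds for \( n\geqslant 5 \)) then forces \( d\wt F=0 \) if and only if \( df=0 \). Your cross-check at the end is essentially this direct argument. The Lee-form approach has the merit of packaging the computation into the transformation law \( \wt\vartheta=\vartheta+(n-3)\,df \) already established, making the role of the dimensional hypothesis completely transparent (the factor \( n-3 \) visibly vanishes exactly when \( n=3 \)); the paper's direct argument is more self-contained and does not rely on the Lee-form machinery, but the dimensional restriction enters less explicitly, through the injectivity of wedging with \( F \).
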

\begin{proof}
Let \( (N^n,g,\xi,\eta,\vp) \) be a quasi-Sasaki manifold, \( n\geqslant5 \). Consider a transversal conformal transformation \( (\wt g,\wt\xi,\wt\eta,\wt\vp) \) by \( f \). In particular, we have that \( \wt F=e^{2f}F \) which implies \[ d\wt F=d(e^{2f}F)=2e^{2f}df\wedge F+e^{2f}dF= 2df\wedge \wt F.\] As the fundamental \( 2 \)-form \( F \) defines (pointwise) an injective map \( T^*_pN\to\Lambda^3T^*_pN \) via wedging: \( \Lel a\mapsto \Lel a\wedge F_p \), we see that \( d\wt F=0 \) if and only if \( df=0 \). Consequently, the transformed \ST structure is quasi-Sasaki if and only if \( f \) is constant, as asserted.
\end{proof}

\begin{example}
\label{ex:Sasak-sphere}
The standard Sasaki structure \( (g,\xi,\eta,\vp) \) on the unit sphere, \[ S^{2k+1}=\left\{z=x+iy\in\bC^{k+1}\colon\,\sum_j|z_j|^2=\sum_j(x_j^2+y_j^2)=1\right\},\] is given as follows. One takes as metric the restriction of the Euclidean metric, \( g=\left.\sum_jdx_j^2+dy_j^2\right|_{S^{2k+1}} \), and the endomorphism \( \vp \) is defined via restriction of the standard complex structure on \( \bR^{2(k+1)}\cong\bC^{k+1} \). The Reeb vector field and its dual \( 1 \)-form are taken to be
\begin{gather*}
\xi=\sum_{j=1}^{k+1}x_j\partial\slash{\partial y_j}-y_j\partial\slash{\partial x_j}\quad \textrm{and} \quad \eta=\sum_{j=1}^{k+1}x_jdy_j-y_jdx_j.
\end{gather*}

Any choice of a basic function \( f \) then gives rise to an \ST structure via a transversal conformal transformation, and, by Proposition \ref{prop:Sasak-to-non-Sasak}, this structure is quasi-Sasaki if and only if  \( f \) is constant. As a concrete example, we can pick \( f \) to be of the form 
\begin{equation*}
f(x,y):=\sum_{j=1}^{k+1}\lambda_j\left(x_j^2+y_j^2\right),
\end{equation*} 
where \( \lambda_j \) are constants.
\end{example}

\paragraph{Transversal homotheties}
There is a well-known notion of \emph{transversal homotheties} in contact geometry \cite{Tanno:homoth}. It carries over to the class of \ST manifolds as follows. Let \( a\in\bR_+ \) be a real positive constant. A \emph{transversal homothety} of \( (N,g,\xi,\eta,\vp) \) by \( a \) is defined by putting \[ \wt g:=ag+a(a-1)\eta^2,\, \wt \xi:=\xi\slash a,\,\wt\eta:=a\eta,\,\wt\vp:=\vp.\]

\begin{proposition}
A transversal homothety transformation of an \ST manifold is an \ST manifold. Moreover, the fundamental \( 2 \)-form, Lee \( 1 \)-form and torsion \( 3 \)-form of the transformed structure are given by 
\[ \wt F=aF,\,\wt\vartheta=\vartheta,\,\textrm{ and }\,\wt c\slash a=(a-1)\eta\wedge d\eta+c, \] respectively.
\end{proposition}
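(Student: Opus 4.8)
The plan is to verify, in order, that the transformed data $(\wt g,\wt\xi,\wt\eta,\wt\vp)$ is a normal almost contact metric structure, that $\wt\xi$ is Killing, and then to compute the three derived objects. First I would check the compatibility conditions \eqref{eq:acm-con}: since $\wt\vp=\vp$ and $\wt\eta=a\eta$ with $\wt\xi=\xi/a$, one has $\wt\vp(\wt\xi)=\vp(\xi)/a=0$ and $\wt\vp^2=\vp^2=-1+\eta\otimes\xi=-1+(a\eta)\otimes(\xi/a)=-1+\wt\eta\otimes\wt\xi$; the metric compatibility $\wt g(\wt\vp X,\wt\vp Y)=\wt g(X,Y)-\wt\eta(X)\wt\eta(Y)$ follows by noting $\wt g=ag+a(a-1)\eta^2$ restricts to $ag$ on the $\vp$-invariant distribution $\ker\eta$ while $\eta(\vp X)=0$, and by direct substitution on $\xi$. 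Normality is immediate because the Sasaki--Hatakeyama tensor \eqref{eq:Sasaki-Htensor} depends only on $\vp$ and on $d\eta$ up to scale: rescaling $\eta$ by the constant $a$ rescales $d\eta$ by $a$ and $\xi$ by $1/a$, so the last term $d\wt\eta(X,Y)\wt\xi$ is unchanged, and the bracket terms are literally the same since $\wt\vp=\vp$; hence $\wt S=S=0$.

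Next, for the Killing property I would use Proposition~\ref{prop:ST-charac-deriv}: it suffices to show $\Ld_{\wt\xi}\wt F=0=\wt\xi\hook d\wt F$. Here $\wt F=\wt g(\wt\vp\cdot,\cdot)$; evaluating on $\ker\eta$ gives $\wt g(\vp X,Y)=ag(\vp X,Y)=aF(X,Y)$, and on $\xi$ both sides vanish, so $\wt F=aF$ — this also disposes of the first claimed formula. Then $\Ld_{\wt\xi}\wt F=\Ld_{\xi/a}(aF)=\Ld_\xi F=0$ and $\wt\xi\hook d\wt F=(\xi/a)\hook d(aF)=\xi\hook dF=0$, both inherited from the \ST hypothesis on $(g,\xi,\eta,\vp)$. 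So $(\wt g,\wt\xi,\wt\eta,\wt\vp)$ is \ST.

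For the Lee form and torsion I would argue directly from the defining formulas rather than re-deriving everything. Since $\wt F=aF$, we get $d\wt F=a\,dF$ and $d\wt\eta=a\,d\eta$, so $d^{\wt\vp}\wt F=a\,d^\vp F$ (the operator $d^\vp$ only involves $\vp$, which is unchanged). From $c=\eta\wedge d\eta+d^\vp F$ one reads off $\wt c=\wt\eta\wedge d\wt\eta+d^{\wt\vp}\wt F=(a\eta)\wedge(a\,d\eta)+a\,d^\vp F=a^2\,\eta\wedge d\eta+a\,d^\vp F$, hence $\wt c/a=a\,\eta\wedge d\eta+d^\vp F=(a-1)\eta\wedge d\eta+c$, as stated. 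For the Lee form I would use the formula $\vartheta(X)=\tfrac12\sum_i dF(X,E_i,\vp(E_i))$ from Proposition~\ref{prop:Lee-form}, being careful that the orthonormal frame must be taken for $\wt g$: writing $\wt E_i$ for a $\wt g$-orthonormal frame adapted to the structure, the horizontal ones are $E_i/\sqrt a$ and the Reeb one is $\xi/\sqrt a=\wt\xi$, so $\tfrac12\sum_i d\wt F(X,\wt E_i,\wt\vp(\wt E_i))=\tfrac12\sum_i a\,dF\bigl(X,E_i/\sqrt a,\vp(E_i)/\sqrt a\bigr)=\tfrac12\sum_i dF(X,E_i,\vp(E_i))=\vartheta(X)$; the Reeb direction contributes nothing since $\vp(\wt\xi)=0$.

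The computation is essentially bookkeeping of constants, so the only place to be careful — and the step I would single out — is the frame rescaling in the Lee-form argument: one must use a $\wt g$-orthonormal (not $g$-orthonormal) frame, and check that the factors of $\sqrt a$ from the two frame vectors exactly cancel the factor $a$ coming from $\wt F=aF$, together with the fact that the Reeb direction drops out of both $\vartheta$ and $\wt\vartheta$ because $\vp$ annihilates it. Everything else follows formally from $\wt\vp=\vp$, $\wt\eta=a\eta$, $\wt\xi=\xi/a$ with $a$ constant, and the characterisations already established in Propositions~\ref{prop:ST-charac-deriv}, \ref{prop:STcharac} and \ref{prop:Lee-form}.
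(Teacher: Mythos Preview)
Your proof is correct and follows essentially the same route as the paper's: verify normality and the \ST condition via Proposition~\ref{prop:ST-charac-deriv}, compute \(\wt F=aF\) directly, and then read off \(\wt c\) and \(\wt\vartheta\) from the defining formulae with the frame rescaling. One small slip: the Reeb vector in your \(\wt g\)-orthonormal frame is \(\wt\xi=\xi/a\), not \(\xi/\sqrt a\) (since \(\wt g(\xi,\xi)=a+a(a-1)=a^2\)), but as you note this term contributes nothing because \(\vp(\wt\xi)=0\), so the argument is unaffected.
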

\begin{proof}
A transverse homothety transformation of \( (N,g,\xi,\eta,\vp) \) by \( a\in\bR_+ \) is obviously a normal almost contact structure. The transformed structure clearly has fundamental \( 2 \)-form given by \( \wt F(X,Y)=\wt g(\wt \vp(X),Y)=aF(X,Y) \) so that \( d\wt F=adF \). This implies \( \wt\xi\hook d\wt F=0 \), i.e., the transformed structure is \ST. We also get
\begin{equation*}
\wt\vartheta(X)=\frac12\sum_{i=1}^nd\wt F(X,\wt E_i,\vp(\wt E_i))=\frac12\sum_{i=1}^nd F(X,E_i,\vp( E_i))=\vartheta(X),
\end{equation*}
and
\begin{equation*}
\wt c=\wt\eta \wedge d\wt\eta+d\wt F(\wt\vp\cdot,\wt\vp\cdot,\wt\vp\cdot)=a^2\eta\wedge d\eta+ad^\vp F=a\left((a-1)\eta\wedge d\eta+c\right),
\end{equation*}
as claimed.
\end{proof}

Note that transversal homotheties preserve the properties of being quasi-Sasaki and balanced. Also observe that if one starts from an \SST manifold satisfying the additional requirement \( d\eta\wedge d\eta=0 \) (see Example \ref{ex:quasi-Sasak-SKT}), then one can obtain a \( 1 \)-parameter family of \SST structures via transversal homotheties.

\paragraph{Riemannian submersions}
Following \cite{Chinea:submer}, we shall use the terminology \emph{almost contact metric submersion} to denote a Riemannian submersion \[ \pi\colon\,(N,g,\xi,\eta,\vp)\to(\wt N,\wt g,\wt\xi,\wt\eta,\wt\vp) \] which is also an almost contact mapping, meaning \( \pi_*\circ\vp=\wt\vp\circ\pi_* \); the vector field \( \xi \) is horizontal in this case, and we shall assume \( \wt \xi=\pi_*(\xi) \). The class of \ST manifolds behaves well with respect to this type of mappings:

\begin{proposition}
Let \( \pi\colon\,(N^n,g,\xi,\eta,\vp)\to(\wt N^{\wt n},\wt g,\wt\xi,\wt\eta,\wt\vp) \) be an almost contact metric submersion. If \( N \) is an \ST manifold, then so is \( \wt N \). In that case, the torsion \( 3 \)-forms are related via \[ \wt c(\wt X,\wt Y,\wt Z)\circ \pi=c(X,Y,Z), \] where \( X \) is the the basic vector field on \( N \) corresponding to \( \wt X \), and so forth. Moreover, the fibres of \( \pi \) are invariant Hermitian submanifolds of \( N \) of dimension \( n-\wt n \).
\end{proposition}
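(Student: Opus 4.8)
The plan is to transfer the characterisation of \ST structures from Proposition~\ref{prop:ST-charac-deriv} along the submersion, using the standard calculus of basic vector fields for a Riemannian submersion. First I would recall that if \( \wt X \) is a vector field on \( \wt N \), its horizontal lift \( X \) is the basic vector field satisfying \( \pi_*X=\wt X\circ\pi \), and that for basic \( X,Y \) the vector field \( \mathcal H[X,Y] \) is basic and projects to \( [\wt X,\wt Y] \); moreover \( g(X,Y)=\wt g(\wt X,\wt Y)\circ\pi \) for basic fields. Since \( \pi \) is an almost contact mapping with \( \pi_*\xi=\wt\xi \), the field \( \xi \) is itself basic, and \( \pi_*(\vp X)=\wt\vp\wt X\circ\pi \); this gives \( \eta(X)=\wt\eta(\wt X)\circ\pi \) and, evaluating on basic fields, \( F(X,Y)=\wt F(\wt X,\wt Y)\circ\pi \), i.e.\ \( \pi^*\wt F \) and \( \pi^*\wt\eta \) are the basic representatives of \( F,\eta \).

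Next I would establish \( \pi^*(d\wt F)=dF \) and \( \pi^*(d\wt\eta)=d\eta \) \emph{as identities between basic forms evaluated on horizontal lifts}: for basic \( X,Y,Z \) one has \( dF(X,Y,Z)=X F(Y,Z)-\dots-F([X,Y],Z)-\dots \), and each Lie bracket term may be replaced by its horizontal part \( \mathcal H[X,Y] \) because \( F \) and \( \eta \) vanish on vertical vectors (the fibres are \( \vp \)-invariant, being the orthogonal complement of the horizontal distribution which contains \( \xi \) and is \( \vp \)-stable), after which naturality of \( d \) under \( \pi \) applies. From here the two conditions of Proposition~\ref{prop:ST-charac-deriv} pull back cleanly: \( \Ld_\xi F=0 \) and \( \xi\hook dF=0 \) on \( N \), restricted to horizontal lifts, give \( \Ld_{\wt\xi}\wt F=0 \) and \( \wt\xi\hook d\wt F=0 \) on \( \wt N \) (using that \( \wt\xi \) is the projection of the Killing, hence structure-preserving, field \( \xi \), so its flow descends). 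Hence \( \wt N \) is \ST. The torsion identity then follows immediately from \( c=\eta\wedge d\eta+d^\vp F \), \( \wt c=\wt\eta\wedge d\wt\eta+d^{\wt\vp}\wt F \), together with the already-established pull-back relations and \( \pi_*\vp=\wt\vp\pi_* \): evaluating \( c \) on a basic frame reproduces \( \wt c\circ\pi \) term by term.

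Finally, for the fibres: each fibre \( \pi^{-1}(\wt p) \) has dimension \( n-\wt n \), is vertical hence orthogonal to \( \xi \), and is \( \vp \)-invariant since the vertical distribution is the \( g \)-orthogonal complement of the \( \vp \)-stable horizontal distribution and \( \vp \) is skew with respect to \( g \) on \( \xi^\perp \); thus \( \vp \) restricts to an almost complex structure \( J \) on each fibre with \( g(JU,JV)=g(U,V) \), i.e.\ the fibre is an almost Hermitian submanifold. To see it is Hermitian one checks integrability of \( J \): for vertical \( U,V \), the terms \( d\eta(U,V)\xi \) and \( \vp^2[U,V] \) in the Sasaki--Hatakeyama tensor \( S \) reduce (since \( \eta \) annihilates vertical fields and \( [U,V] \) is again vertical) so that vanishing of \( S \) on \( N \) forces vanishing of the Nijenhuis tensor of \( J \) along the fibre; this is the one place where a small computation is needed, and it is the step I expect to be the main obstacle, since one must be careful that \( \vp \) of a vertical vector is again vertical (true) and that no horizontal components of brackets intrude (they do not, as the fibres are submanifolds). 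Calling these fibres ``invariant Hermitian submanifolds'' then records exactly this: \( \vp \)-invariance plus integrability of the induced complex structure.
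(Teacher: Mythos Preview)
Your argument has a genuine gap: you never establish that \( \wt N \) is \emph{normal}. Proposition~\ref{prop:ST-charac-deriv} reads ``A \emph{normal} almost contact metric manifold is \ST if and only if \( \Ld_\xi F=0=\xi\hook dF \)''; it characterises the Killing condition \emph{given} normality. So even after you have verified \( \Ld_{\wt\xi}\wt F=0 \) and \( \wt\xi\hook d\wt F=0 \), you cannot conclude ``hence \( \wt N \) is \ST'' without first showing \( \wt S=0 \). This is not automatic from the data you have assembled: the pull-back identities for \( F,\eta,dF,d\eta \) on horizontal lifts say nothing about the Sasaki--Hatakeyama tensor. You would need a separate computation showing \( \wt S(\wt X,\wt Y)\circ\pi=\pi_*(S(X,Y)) \), which is of the same flavour as your \( dF \) calculation but involves controlling the vertical parts of brackets such as \( [\vp X,\vp Y] \) under \( \pi_* \); alternatively you could show directly that \( \wt\xi \) is Killing (this does follow from \( \xi \) Killing plus the Riemannian-submersion identities, bypassing Proposition~\ref{prop:ST-charac-deriv}), but you would \emph{still} need normality of \( \wt N \) to meet the definition of \ST.

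The paper's own proof sidesteps all of this by citing Chinea et al.\ \cite{Chinea-al:submer}, where the identity \( \wt S(\wt X,\wt Y)\circ\pi=\pi_*(S(X,Y)) \) (along with the \( F,\eta,dF,d\eta \) identities you rederive) is already recorded; likewise the fibre statement is quoted from \cite{Chinea:submer}. Your hands-on approach to the fibres is fine and essentially reproduces that reference. One minor wording issue: you justify replacing \( [X,Y] \) by \( \mathcal H[X,Y] \) in the \( dF \) formula by saying ``\( F \) and \( \eta \) vanish on vertical vectors''. That is false for \( F \) on pairs of vertical vectors; what you actually need (and what holds, by \( \vp \)-invariance of the vertical distribution and orthogonality) is \( F(\textup{vertical},\textup{horizontal})=0 \).
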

\begin{proof}
The first assertions are immediate consequences of the following identities of \cite[Proposition 1.1]{Chinea-al:submer}: 
\begin{gather*}
\wt S(\wt X,\wt Y)\circ\pi=\pi_*(S(X,Y)),\,\wt F(\wt X,\wt Y)\circ\pi=F(X,Y),\\
d\wt F(\wt X,\wt Y,\wt Z)\circ\pi=dF(X,Y,Z),\,\wt \eta(\wt X)\circ\pi=\eta(X),\,\wt d\eta(\wt X,\wt Y)\circ\pi=d\eta(X,Y).
\end{gather*}

The claim regarding the fibres follows directly from \cite[Proposition 2.1, Theorem 2.1]{Chinea:submer}.
\end{proof}

\begin{example}
Let \( (M,g_M,J) \) be a \KT manifold and \( (\wt N,\wt g,\wt \vp) \) an \ST manifold. Then we can endow the product \( N=M\times \wt N \) with an \ST structure in the obvious way; one takes \( g:=g_M+\wt g,\,\eta:=\wt\eta,\,\xi:=\wt\xi,\,\vp:=J+\wt\vp \). An almost contact metric submersion is now given by projecting onto the second factor, i.e., \( \pi\colon\,N\to \wt N \) has \( \pi(x,\wt x)=\wt x \). The fibres are obviously copies of \( M \).
\end{example}

\begin{remark}
Conceivably, the above observations could play a role in the study of harmonic morphisms \cite{Baird-W:harmonic}. In particular, one could follow the ideas of \cite{Chinea:harmonic} which studies horizontally conformal  \( (\vp ,\wt\vp) \)-holomorphic submersions; a submersion\linebreak \( \pi\colon\,(N,g,\xi,\eta,\vp)\to(\wt N,\wt g,\wt\xi,\wt\eta,\wt\vp) \) is an almost contact mapping satisfying in addition \( \wt g(\pi_*(X),\pi_*(Y))=\lambda^2g(X,Y) \), for any horizontal vector fields \( X,Y \), and with \( \lambda \) denoting a smooth nowhere vanishing function. In particular, note that if \( \lambda=1 \) then \( \pi \) is an almost contact metric Riemannian submersion.  
\end{remark}

\section{Interpolating between ST and KT structures}
\label{sec:sandwich}

One salient feature of Sasaki geometry is its relation to K\"ahler manifolds (see, for instance, \cite[Chapter 6]{Boyer-G:Sasak-book}). In the setting of torsion geometry, one may similarly ask whether the concepts of \ST and \KT manifolds are related. 

\paragraph{The KT cylinder over an ST manifold}
\label{sec:ST-KT}

Given an \ST manifold  \( (N,g,\xi,\eta,\vp) \) we consider the cylinder \( \mathcal K(N):=\bR\times N \) which has \( N \) as its base. We give this the product metric \( g_{\mathcal K}:=ds^2+g \) and define an almost complex structure \( J_{\mathcal K} \) on \( \mathcal K(N) \) by requiring that \( \omega_{\mathcal K}=g(J_{\mathcal K}\cdot,\cdot) \), where by definition \( \omega_{\mathcal K}= ds\wedge\eta+F \).  We dub \( (\mathcal K(N),g_{\mathcal K},J_{\mathcal K}) \) the \emph{cylinder associated with \( (N,g,\xi,\eta,\vp) \)}. By specializing the arguments  in the proof of \cite[Theorem 2.3]{Fernandez-al:SKT}, we find:

\begin{proposition}
\label{prop:ST-KT-cyl}
The cylinder \( \mathcal K(N) \) associated with an \ST manifold \( (N,g,\xi,\eta,\vp) \) is a \KT manifold. Moreover, \( \mathcal K \) is 
\SKT (resp. balanced) if and only if \( N \) is \textsc{sst} (resp. balanced).
\end{proposition}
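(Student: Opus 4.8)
The plan is to verify directly that the almost complex structure $J_{\mathcal K}$ defined by $\omega_{\mathcal K}=ds\wedge\eta+F$ is integrable, and then to compute the torsion $3$-form of the Bismut connection and compare $d$ of it (and the Lee form) against the \ST data on $N$. First I would establish integrability of $J_{\mathcal K}$: since the metric is a product and $J_{\mathcal K}$ sends $\partial_s$ to $-\xi$ and acts as $\vp$ on the $\eta$-orthogonal distribution of $TN$ (extended so that it squares to $-1$), the Nijenhuis tensor of $J_{\mathcal K}$ can be expressed in terms of the Sasaki--Hatakeyama tensor $S$ of $(g,\xi,\eta,\vp)$ together with $\Ld_\xi\vp$ and $\xi\hook d\eta$. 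By normality of the \ST structure $S=0$, and by Proposition~\ref{prop:ST-charac-deriv} (or rather the relations quoted in its proof) $\Ld_\xi\vp=0$ and $\xi\hook d\eta=0$; feeding these in makes the Nijenhuis tensor vanish, so $(\mathcal K(N),g_{\mathcal K},J_{\mathcal K})$ is Hermitian. This is exactly the specialization of the argument in \cite[Theorem 2.3]{Fernandez-al:SKT} that the statement refers to.

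Next I would identify the Bismut torsion. By Gauduchon's formula (recalled in the introduction to Section~\ref{sec:ST}), the torsion of the \KT connection of $\mathcal K(N)$ is $c_{\mathcal K}(X,Y,Z)=d\omega_{\mathcal K}(J_{\mathcal K}X,J_{\mathcal K}Y,J_{\mathcal K}Z)$. Since $d\omega_{\mathcal K}=ds\wedge d\eta-d\eta\wedge 0+dF = -ds\wedge d\eta + dF$ — more carefully, $d\omega_{\mathcal K}=d(ds\wedge\eta)+dF=-ds\wedge d\eta+dF$ — and $J_{\mathcal K}$ exchanges $ds$ and $\eta$ (up to sign) while acting as $\vp$ elsewhere, a short computation using $\xi\hook dF=0$ and $\Ld_\xi F=0$ gives $c_{\mathcal K}=ds\wedge(\xi\hook c)+c = ds\wedge d\eta + (\eta\wedge d\eta + d^\vp F)$, i.e. $c_{\mathcal K}=ds\wedge d\eta + c$, where $c=\eta\wedge d\eta+d^\vp F$ is the \ST torsion on $N$. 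The key point is that the ``$d^\vp$'' twisting on $\mathcal K$ restricted to horizontal directions reproduces exactly the $d^\vp F$ term, because $J_{\mathcal K}$ restricted to $\ker\eta$ is $\vp$, and the mixed $ds$-term collapses to $ds\wedge d\eta$ because $\xi\hook d\eta = 0$.

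From $c_{\mathcal K}=ds\wedge d\eta + c$ the two equivalences follow by inspection. Taking $d$: since $s$ is a coordinate on the $\bR$-factor and everything else is pulled back from $N$, we get $dc_{\mathcal K}=-ds\wedge d(d\eta)+dc = dc$ as a form on $\bR\times N$, so $dc_{\mathcal K}=0$ if and only if $dc=0$, i.e. $\mathcal K(N)$ is \SKT precisely when $N$ is \SST. For the balanced statement, I would compute the Lee $1$-form $\vartheta_{\mathcal K}$ of $\mathcal K(N)$ using $\vartheta_{\mathcal K}=-d^*\omega_{\mathcal K}\circ J_{\mathcal K}$ (equivalently $\vartheta_{\mathcal K}(X)=\tfrac12\sum d\omega_{\mathcal K}(X,\tilde E_i,J_{\mathcal K}\tilde E_i)$ in an adapted frame, in analogy with Proposition~\ref{prop:Lee-form}); choosing the adapted orthonormal frame $\partial_s,\xi,E_1,\dots$ on $\mathcal K(N)$, the $ds\wedge d\eta$ part of $d\omega_{\mathcal K}$ contributes nothing to this contraction (the $\partial_s$ and $\xi$ slots are already used up), so $\vartheta_{\mathcal K}$ restricted to $TN$ equals $\vartheta$ and its $ds$-component vanishes. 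Hence $\vartheta_{\mathcal K}=0$ if and only if $\vartheta=0$.

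\textbf{Main obstacle.} The routine-but-delicate part is the bookkeeping in the integrability computation and in the torsion computation: one must be careful about the sign conventions relating $J_{\mathcal K}$, $\partial_s$, $\xi$, $\eta$ and $F$, and about which contractions of $d\omega_{\mathcal K}$ survive. The conceptual crux, however, is ensuring that the three \ST conditions — normality ($S=0$), $\Ld_\xi F=0$, and $\xi\hook dF=0$ — are exactly what is needed: normality kills the Nijenhuis tensor, while the two Reeb conditions are precisely what make the $ds$-direction ``split off cleanly'' so that $c_{\mathcal K}$ decomposes as $ds\wedge d\eta + c$ with no cross terms. Verifying that no additional hypothesis sneaks in is the point that needs care; after that, the two equivalences are immediate.
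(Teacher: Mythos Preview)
Your overall strategy matches the paper's: cite normality for integrability of \( J_{\mathcal K} \), compute the Bismut torsion by evaluating \( d\omega_{\mathcal K}(J_{\mathcal K}\cdot,J_{\mathcal K}\cdot,J_{\mathcal K}\cdot) \), and read off the two equivalences. But your torsion computation contains a concrete error. You claim \( c_{\mathcal K}=ds\wedge d\eta + c \). In fact the \( J_{\mathcal K} \)-twist of the term \( -ds\wedge d\eta \) in \( d\omega_{\mathcal K} \) is \emph{not} \( ds\wedge d\eta \): since \( J_{\mathcal K}\partial_s=\xi \) and \( J_{\mathcal K}\xi=-\partial_s \), one has \( ds\circ J_{\mathcal K}=-\eta \), while \( d\eta \) is \( J_{\mathcal K} \)-invariant (it is basic and of type \( (1,1) \)). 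Hence
\[
(-ds\wedge d\eta)(J_{\mathcal K}\cdot,J_{\mathcal K}\cdot,J_{\mathcal K}\cdot)=-(-\eta)\wedge d\eta=\eta\wedge d\eta,
\]
and together with \( dF(J_{\mathcal K}\cdot,J_{\mathcal K}\cdot,J_{\mathcal K}\cdot)=d^\vp F \) (which does use \( \xi\hook dF=0 \)) this gives the paper's formula \( c_{\mathcal K}=\eta\wedge d\eta+d^\vp F=c \), with \emph{no} \( ds \)-component. This is also consistent with the warped-product formula \eqref{eq:warpedtorsion} at \( f\equiv1 \).

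By luck your spurious term \( ds\wedge d\eta \) is closed, so your \SKT argument survives. But the balanced argument does not: your claim that ``the \( ds\wedge d\eta \) part of \( d\omega_{\mathcal K} \) contributes nothing to this contraction'' is false when \( X=\partial_s \), since \( (-ds\wedge d\eta)(\partial_s,E_j,\vp E_j)=-d\eta(E_j,\vp E_j) \) is generically nonzero. Once you have the correct identity \( c_{\mathcal K}=c \), the paper's one-line conclusion applies: both \( dc_{\mathcal K}=dc \) and the Lee-form contraction of \( c_{\mathcal K} \) reduce to the corresponding \ST quantities on \( N \), yielding the two equivalences directly.
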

\begin{proof}
Write \( n=2k+1\). At a given point \((s,p)\in \bR \times N \), we may pick an oriented orthonormal coframe \( \Lel f,\Lel e^n,\Lel e^1,\ldots \) such that \[ \omega_{\mathcal K}=\Lel f\wedge\Lel e^{n}+\sum_{i=1}^k\Lel e^{2i-1}\Lel \wedge\Lel e^{2i},\] where \( \Lel e^n=\eta \) and \( \Lel f=ds \). It is well known (cf. \cite{Sasaki-H:Nijenhuis})  that the normality of \( (g,\xi,\eta,\vp) \) is equivalent to the integrability of the compatible almost complex structure \( J_{\mathcal K} \). In particular, this means that \( ({\mathcal K}(N),g_{\mathcal K},J_{\mathcal K}) \) is \KT. 

Next, in order to express the associated totally skew-symmetric torsion term, let us consider the \( 3 \)-form \( d\omega_{\mathcal K}=-ds\wedge d\eta+dF \) . At the given point, we consider the decomposition \( dF=\Lel a+\Lel b\wedge\Lel e^{n+1} \), where \( \Lel a,\Lel b\in \Lambda^*\Span{\Lel e^1,\ldots,\Lel e^{2k}}. \) As \( \xi\hook dF=0 \), by Proposition \ref{prop:ST-charac-deriv}, we must have \( \Lel b=0 \). Consequently, we find \[ dF(J_{\mathcal K}\cdot,J_{\mathcal K}\cdot,J_{\mathcal K}\cdot)=\Lel a(J_{\mathcal K}\cdot,J_{\mathcal K}\cdot,J_{\mathcal K}\cdot)=\Lel a^\vp=d^\vp F. \]

In a similar way, and by using the fact that normality ensures that \( d\eta(\vp\cdot,\vp\cdot)=d\eta(\cdot,\cdot) \), we obtain \( d\eta(J_{\mathcal K}\cdot,J_{\mathcal K}\cdot)=d\eta \).

In summary, we can now express the torsion \( 3 \)-form associated with\break{} \( (N,g,\xi,\eta,\vp) \) via
\[ d\omega_{\mathcal K}(J_{\mathcal K}\cdot,J_{\mathcal K}\cdot,J_{\mathcal K}\cdot)=\eta\wedge d\eta+d^\vp F. \]
The expression for the torsion \( 3 \)-form  of \( ({\mathcal K}(N),g_{\mathcal K},J_{\mathcal K}) \) immediately implies the last assertion.
\end{proof}

\begin{remark}
The \KT cylinder may be viewed as a special case of a more general product construction. Given two \ST manifolds  \( (N_\pm,g_\pm,\xi_\pm,\eta_\pm,\vp_\pm) \), we can form the product \( \mathcal P(N_+\times N_-) \) which we equip with the metric \( g_\mathcal P=g_++g_- \) and almost complex structure \( J_{\mathcal P} \) defined via the compatibility condition \( g(J_{\mathcal P}\cdot,\cdot)=\omega_{\mathcal P } \), where \( \omega_{\mathcal P}=\eta_-\wedge\eta_++F_-+F_+ \). It is well-known \cite[Proposition 3]{Morimoto:acm-normal} that the conditions \( S_+=0=S_- \) ensure the integrability of \( J_{\mathcal P} \). Consequently, \( (\mathcal P(N_+\times N_-),g_{\mathcal P}, J_{\mathcal P}) \) is a \KT manifold. 
\end{remark}

\begin{example}
Starting from a left-invariant \SST structure on a compact Lie group \( G \) (cf. Theorem \ref{thm:cmp-Lie-grp}), one obtains an \SKT structure on the cylinder \( \mathcal K( G) \); these examples are well known \cite{Spindel-al}. In fact, it follows, by Remark \ref{rem:SST-grp}, that \( \mathcal K( G) \) is an \( \textsc{scyt} \) manifold, meaning, in addition to being \SKT, that the Bismut connection has restricted holonomy group contained in \( \SU(k+1) \) (cf. \cite{Grantcharov-G-P:CYT}). 
\end{example}

An alternative to the cylinder construction is the cone construction, where one considers the Riemannian cone \( \mathcal C(N):=\bR_+\times N,g_\mathcal C:=dr^2+r^2g \). \( \mathcal C(N) \) can be equipped with the almost complex structure \( J_{\mathcal C} \) fixed by imposing the compatibility condition \( g_{\mathcal C}(J_{\mathcal C}\cdot,\cdot)=\omega_{\mathcal C} \), where \( \omega_{\mathcal C}=r dr \wedge\eta+r^2F \). In fact, it is possible to characterize \ST structures in terms of \KT structures on the corresponding cone (see \cite{Houri-al:ST} and Remark~\ref{rem:STvsST}).

However, the cone construction behaves less naturally in other respects. Indeed, arguments similar to those in the proof of Proposition \ref{prop:ST-KT-cyl} (see also \cite[Theorem 3.1]{Fernandez-al:SKT}) yield the following:

\begin{proposition}
\label{prop:ST-KT-cone}
The cone associated with an \ST manifold is a \KT manifold. Moreover, the cone is \SKT if and only if the \ST manifold is Sasaki.  
\end{proposition}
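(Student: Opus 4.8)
The plan is to mimic the cylinder computation of Proposition \ref{prop:ST-KT-cyl}, the only difference being that the conformal factor $r^2$ in $g_{\mathcal C}$ introduces extra terms when one differentiates $\omega_{\mathcal C}$. First I would verify that $J_{\mathcal C}$ is integrable: this is a classical fact (it is equivalent to normality of $(g,\xi,\eta,\vp)$, cf. \cite{Sasaki-H:Nijenhuis,Morimoto:acm-normal}, just as in the cylinder case), so $(\mathcal C(N),g_{\mathcal C},J_{\mathcal C})$ automatically carries its Bismut connection and is a \KT manifold. The substantive part is the computation of the torsion $3$-form $c_{\mathcal C}=-d\omega_{\mathcal C}(J_{\mathcal C}\cdot,J_{\mathcal C}\cdot,J_{\mathcal C}\cdot)$ (sign conventions as in \cite{Gauduchon:Dirac}).

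Next I would compute $d\omega_{\mathcal C}$ explicitly. Writing $\omega_{\mathcal C}=r\,dr\wedge\eta+r^2F$, one gets $d\omega_{\mathcal C}=-r\,dr\wedge d\eta+2r\,dr\wedge F+r^2\,dF$. Now apply $J_{\mathcal C}$, using $J_{\mathcal C}(r\partial_r)=\xi$, hence $J_{\mathcal C}^*(dr)=-\eta/r$ and $J_{\mathcal C}^*\eta=r\,dr$ on the relevant factors, together with the type facts recalled in the proof of Proposition \ref{prop:STcharac}: normality gives $d\eta(\vp\cdot,\vp\cdot)=d\eta$, while $\xi\hook dF=0$ gives $dF(J_{\mathcal C}\cdot,J_{\mathcal C}\cdot,J_{\mathcal C}\cdot)=d^\vp F$. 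The term $-r\,dr\wedge d\eta$ transforms into something proportional to $\eta\wedge d\eta$; the term $2r\,dr\wedge F$, after applying $J_{\mathcal C}$, picks up a factor from $J_{\mathcal C}^*(dr)$ and $F(\vp\cdot,\vp\cdot)=F$, producing a term proportional to $\eta\wedge F$ (this is precisely the new contribution absent in the cylinder case); and $r^2\,dF$ contributes $r^2 d^\vp F$. Collecting these with the correct powers of $r$, one obtains
\begin{equation*}
c_{\mathcal C}=\eta\wedge d\eta-2\,\eta\wedge F+d^\vp F,
\end{equation*}
after rescaling by the appropriate power of $r$ to express it in the orthonormal coframe of $g_{\mathcal C}$; the key point is merely that the coefficient of $\eta\wedge F$ is a nonzero constant.

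Finally I would read off the last assertion. Recall from Remark \ref{rem:ST-KT-comp} that $c_{\mathcal C}$ is closed if and only if $dc_{\mathcal C}=0$. Using $d\eta$ of type $(1,1)$ and the structure equations, differentiating $c_{\mathcal C}$ one finds that closedness forces $dF=0$ together with a relation between $d\eta$ and $F$; more directly, since $c_{\mathcal C}$ already contains the mandatory term $-2\eta\wedge F$, and since by Proposition \ref{prop:ST-charac-c}(iv) the Sasaki condition corresponds precisely to $c=2\eta\wedge F$ with $d^\vp F=0$, one checks that $dc_{\mathcal C}=0$ holds exactly when $d\eta=2F$ (equivalently $F=d\eta/2$) and $dF=0$, i.e.\ when $(N,g,\xi,\eta,\vp)$ is Sasaki. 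The easiest route is: if $N$ is Sasaki then $\omega_{\mathcal C}=\tfrac12 d(r^2\eta)$ is exact hence $d\omega_{\mathcal C}$ and $c_{\mathcal C}$ are controlled, and one verifies $dc_{\mathcal C}=0$ by a short direct computation; conversely, projecting the equation $dc_{\mathcal C}=0$ onto the appropriate bidegree on $N$ and using the injectivity of $a\mapsto a\wedge F$ (as in the proof of Proposition \ref{prop:Sasak-to-non-Sasak}) isolates $d\eta=2F$ and $dF=0$.

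The main obstacle I anticipate is bookkeeping the powers of $r$ and the conformal correction terms correctly when passing from the coordinate coframe $\{dr,\eta,e^i\}$ to a $g_{\mathcal C}$-orthonormal coframe $\{dr, re^i\}$, so that $d^\vp F$, $\eta\wedge F$ and $\eta\wedge d\eta$ all appear with $r$-independent coefficients; this is exactly the step where the cone differs from the cylinder and where the extra $\eta\wedge F$ term — responsible for the Sasaki restriction — is generated.
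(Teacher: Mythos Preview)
Your computation of the torsion is correct and matches the paper: one finds
\[
c_{\mathcal C}=r^2\bigl(\eta\wedge d\eta-2\,\eta\wedge F+d^\vp F\bigr).
\]
However, the sentence ``after rescaling by the appropriate power of \(r\) to express it in the orthonormal coframe'' is where a genuine gap opens up. The factor \(r^2\) is \emph{not} a coframe artifact that can be scaled away: \(c_{\mathcal C}\) is a differential \(3\)-form on \(\mathcal C(N)\), and its \(r\)-dependence is precisely what drives the \SKT characterisation. Writing \(A=\eta\wedge d\eta-2\,\eta\wedge F+d^\vp F\in\Omega^3(N)\), one has
\[
dc_{\mathcal C}=2r\,dr\wedge A+r^2\,dA,
\]
so \(dc_{\mathcal C}=0\) is equivalent to \(A=0\), not merely to \(dA=0\). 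If you strip the \(r^2\) first and then differentiate, you are left with \(dA=0\), which does \emph{not} characterise Sasaki: any cosymplectic manifold (\(d\eta=0=dF\)) has \(A=-2\,\eta\wedge F\) and \(dA=0\), yet is not Sasaki.

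For the converse, once \(A=0\) is established, the paper's argument is a one-liner: contract with \(\xi\). Since \(\xi\hook\eta=1\), \(\xi\hook d\eta=\xi\hook F=0\), and \(\xi\hook d^\vp F=0\) (as \(\vp\xi=0\)), one gets \(d\eta-2F=0\) immediately. Your suggested route via ``injectivity of \(a\mapsto a\wedge F\)'' is not the relevant tool: that map acts on \(1\)-forms, whereas here you need to separate the \(\eta\)-component of a \(3\)-form from its horizontal part --- which is exactly what contracting with \(\xi\) does. The forward direction (Sasaki \(\Rightarrow\) cone K\"ahler, hence trivially \SKT) is fine as you wrote it.
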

\begin{proof}
By construction, we have \( dr(J_{\mathcal{ C}}\cdot)=-r\eta \), and then the usual  \( \vp \)-invariance properties of \( F \) and \( d\eta \) imply
\begin{multline*}
c_{\mathcal C}=d\omega_{\mathcal C}(J_{\mathcal C}\cdot,J_{\mathcal C}\cdot,J_{\mathcal C}\cdot)\\
= rdr(J\cdot)\wedge (-d\eta +2F)+ r^2d^\vp F=r^2(\eta\wedge(d\eta -2F)+ d^\vp F).
\end{multline*}
Consequently, the \SKT condition for the cone is equivalent to the relation
\begin{equation}
\label{eq:ST-cone}
\eta\wedge(d\eta -2F)+ d^\vp F=0. 
 \end{equation}

Clearly, \eqref{eq:ST-cone} is satisfied if the \ST manifold is Sasaki; in that case one has \( d(2F)=d(d\eta)=0 \) and \( \eta\wedge d\eta=\eta\wedge(2F) \). Conversely, suppose the cone is \SKT. Then we find \[ 0= \xi\hook(\eta\wedge(d\eta -2F)+ r^2d^\vp F)=d\eta-2F,\] so that the \ST manifold is Sasaki. 
\end{proof}

\begin{remark}
\label{rem:STvsST}
In \cite{Houri-al:ST}, the cone construction is used to give a different, equivalent definition of an \ST manifold; it is defined as a Riemannian manifold \( (M,g) \) endowed with a \( 3 \)-form \( T \) such that the cone \( \mathcal C(M) \) is \KT with torsion equal to \( r^2T \). By \cite{Houri-al:ST}, this condition implies that the induced almost contact metric structure is normal, and the Reeb vector field is Killing. Moreover, the \( 3 \)-form \( T \) necessarily satisfies
\begin{equation}
 \label{eqn:HourisTorsion}
T=d^\vp F+d\eta\wedge \eta-2F\wedge\eta.
\end{equation}

Conversely, if \( (M,g,\eta,\xi,\vp) \) is \ST in the sense of Definition~\ref{def:ST}, define \( T \) by \eqref{eqn:HourisTorsion}; 
then \( d^T\eta \), defined as \( d\eta-\xi\hook T \), coincides with \( 2F \), and the metric connection with torsion \( T \) satisfies 
\[\nabla^T_X F= \nabla_X F+\frac12 \sum_a (T-c)(X,\Lel E_a,\cdot)\wedge F(\Lel E_a,\cdot)=\eta\wedge X^\flat,\]
so by \cite[Proposition II.2]{Houri-al:ST} the structure is \ST as defined therein. 

Notice that, by \eqref{eqn:HourisTorsion} and consistency with Proposition~\ref{prop:STcharac}, the connection \( \nabla^T \) is not an almost contact metric connection.
\end{remark}

The above cylinder and cone constructions are special instances of warped products. In general, the warped product of an \ST manifold with $\bR$ is \KT, but it can only be \SKT if it is a cylinder or a cone:
\begin{proposition}
\label{prop:warpedprod}
Let \( (N,g,\xi,\eta,\vp) \) be an \ST manifold, and let \( f \) be a non-vanishing function on a connected interval \( I\subset\bR \). Then the warped product metric 
\begin{equation}
\label{eq:warpedmetr}
dr^2+f(r)^2g, 
\end{equation}
on \( I\times N \), is a \KT metric. Moreover, the torsion associated with this \( \KT \) structure is given by
\begin{equation}
\label{eq:warpedtorsion}
f^2(c-2f' F\wedge \eta). 
\end{equation}

In particular, the \KT structure is \SKT only if  \( f \) is constant 
and \( c \) is closed, or \(  f(r)=2\lambda r \) and \(  d\eta=2\lambda F \) for a constant \( \lambda \).
\end{proposition}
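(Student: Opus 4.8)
The plan is to follow the pattern of the proofs of Propositions~\ref{prop:ST-KT-cyl} and~\ref{prop:ST-KT-cone}. On \( I\times N \) I would put the almost complex structure \( J \) compatible with the metric~\eqref{eq:warpedmetr} whose fundamental form is \( \omega:=f\,dr\wedge\eta+f^{2}F \); concretely \( J\partial_{r}=\xi/f \), while \( J \) restricts to \( \vp \) on \( \ker\eta\subset TN \). The key remark is that the change of variable \( u=\int dr/f \) rewrites~\eqref{eq:warpedmetr} as \( f^{2}(du^{2}+g) \), which is conformal to the cylinder metric \( du^{2}+g \) and carries \( J \) onto the complex structure of \( \mathcal K(N) \); hence, exactly as in the proof of Proposition~\ref{prop:ST-KT-cyl}, \( J \) is integrable if and only if \( (g,\xi,\eta,\vp) \) is normal. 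As \( N \) is \ST\ this holds, so \( (I\times N,dr^{2}+f^{2}g,J) \) is Hermitian and therefore \KT.

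To read off the torsion I would use Gauduchon's formula, namely that the Bismut torsion is \( (X,Y,Z)\mapsto d\omega(JX,JY,JZ) \). From \[ d\omega=-f\,dr\wedge d\eta+2ff'\,dr\wedge F+f^{2}\,dF \] together with \( dr\circ J=-f\eta \) and the normality identities \( d\eta(\vp\cdot,\vp\cdot)=d\eta \), \( F(\vp\cdot,\vp\cdot)=F \), \( dF(\vp\cdot,\vp\cdot,\vp\cdot)=d^{\vp}F \) — the last one relying on \( \xi\hook dF=0 \) (Proposition~\ref{prop:ST-charac-deriv}) — a short computation returns \( f^{2}(\eta\wedge d\eta+d^{\vp}F)-2f^{2}f'\,\eta\wedge F \), which is~\eqref{eq:warpedtorsion}.

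For the final statement I would differentiate~\eqref{eq:warpedtorsion} and decompose \( d \) of the torsion into its \( dr \)-part and the part pulled back from \( N \). Requiring both to vanish, the \KT\ structure is \SKT\ if and only if, for every \( r\in I \), \[ f'\,c=(2f'^{2}+ff'')\,\eta\wedge F \quad\text{and}\quad dc=2f'\,(d\eta\wedge F-\eta\wedge dF). \] If \( f \) is constant, the first identity is automatic and the second says \( dc=0 \): this is the first alternative. If \( f \) is non-constant, contracting the first identity with \( \xi \) (using \( \xi\hook c=d\eta \) and \( \xi\hook(\eta\wedge F)=F \)) yields \( f'\,d^{\vp}F=0 \) and \( f'\,d\eta=(2f'^{2}+ff'')F \); evaluating at a point where \( f'\neq0 \), the first relation forces \( d^{\vp}F=0 \), hence \( c=\eta\wedge d\eta \) and \( N \) is quasi-Sasaki with \( dF=0 \) (Proposition~\ref{prop:ST-charac-c}), while the second (its coefficient now being a constant) gives \( d\eta=2\lambda F \) for some \( \lambda\in\bR \). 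Feeding this back into the second identity of the displayed pair reduces it to \( \lambda^{2}\,F\wedge F=\lambda f'\,F\wedge F \), so that \( f'=\lambda \); thus \( f \) is affine in \( r \) and, after translating \( I \), proportional to \( r \), with \( d\eta \) a constant multiple of \( F \) — the second alternative.

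The first two paragraphs are routine, being essentially the cylinder computation of Proposition~\ref{prop:ST-KT-cyl} decorated with the factor \( f \). The step I expect to be the real obstacle is the concluding case analysis: one must keep careful track of which quantities depend on \( r \) and which are fixed forms on \( N \) (so as to split the two identities correctly, using the injectivity of wedging with \( F \) as in the proof of Proposition~\ref{prop:Sasak-to-non-Sasak}), and treat separately the degenerate strata — notably the locus where \( F\wedge F\equiv0 \) — on which the second identity carries no information and additional solutions for \( f \) could a priori survive.
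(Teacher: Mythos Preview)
Your first two paragraphs are correct and track the paper's argument closely: the substitution \( u=\int dr/f \) identifies the almost complex structure with that of the cylinder (the paper says the same thing via adapted coframes, writing \( \tfrac1f\,dt=dh \)), and your torsion computation is precisely the general-\( f \) version of the cone computation that the paper invokes.

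Your case analysis is more explicit than the paper's one-line assertion of the dichotomy. The real gap, though, is not the stratum \( F\wedge F\equiv0 \) you flag (that only occurs when \( \dim N=3 \)), but the step where you pass from \( \lambda^{2}\,F\wedge F=\lambda f'\,F\wedge F \) to \( f'=\lambda \): you are dividing by \( \lambda \). When \( \lambda=0 \) the manifold \( N \) is cosymplectic (\( d\eta=0 \), \( dF=0 \), \( c=0 \)); then your second displayed identity is vacuous and the first becomes \( 2f'^{2}+ff''=0 \), i.e.\ \( (f^{3})''=0 \). Any \( f(r)=(\alpha r+\beta)^{1/3} \) with \( \alpha\neq0 \) solves this, and the warped-product torsion is the \emph{constant} form \( -\tfrac{2}{3}\alpha\,F\wedge\eta \), hence closed. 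This gives an \SKT warped product with \( f \) neither constant nor affine, fitting neither alternative of the conclusion; so the obstruction here is not in your strategy but in the ``only if'' as stated. The paper's brief argument glosses over the same point.
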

\begin{proof}
To see that the warped product \eqref{eq:warpedmetr} is a \KT  metric, we observe
that if \( e^1,\ldots, e^n \) is an  adapted coframe for the given \ST structure, then
\( fe^1, ..., fe^n, dt \) is an adapted coframe of an almost Hermitian structure on 
\( I\times N \). Next, observe that the underlying almost complex structure is compatible with the 
coframe  \( e^1, ..., e^n, \frac1f dt \).
As \( \frac1fdt \)  is a \( 1 \)-form on \( I \), we may write it as \( dh \) for a suitable function \( h\in C^\infty(I) \). With respect to this new coordinate, the coframe \( e^1, ..., e^n, dh \) corresponds to the product almost complex structure, which is known to be integrable.

Regarding the \SST condition, we note that the argument in the proof of Proposition \ref{prop:ST-KT-cone} 
tells us that the \KT torsion is indeed given by \eqref{eq:warpedtorsion}. In particular, if this is closed then, for each \( r \in I \), either \( f'(r)=0 \) and \( c \) is closed, or 
\( d\eta=2f'(r)F \). If the latter condition holds for some \( r \) then \( f'(r) \) must be constant.
\end{proof}
 
\paragraph{A local classification} 
\label{sec:sandwich-loc}

We dub an \ST structure \emph{regular} if the Reeb foliation is regular. In this case the space of leaves is a manifold. The following result describes the structure on this space of leaves. Since regularity always holds locally, it gives a local classification of \ST structures.

\begin{proposition}
\label{prop:sandwich}
Given a regular \ST manifold  \( (N,g,\xi,\eta,\vp) \) then the space of leaves, \( M \), has a unique  Hermitian structure \( (h,J) \)  and  closed \( (1,1) \)-form \( \sigma \) such that 
\begin{compactenum}
\item the projection \( \pi\colon (N,g)\to (M,h) \) is a Riemannian submersion;
\item \(\pi_*\circ \vp  = J\circ \pi_*\); 
\item \( \frac{1}{2\pi}d\eta=\pi^*\sigma \).
 \end{compactenum}

Conversely, given a Hermitian manifold \( (M,h,J) \) with a closed integral \( (1,1) \)-form \( \sigma \), a circle bundle \( N\to M \) with first Chern class equal to \( [\sigma] \), and a connection form \( \eta \) on \( N \) satisfying (iii), then \( N \) has a unique  \ST structure  \( (g,\xi,\eta,\vp) \)  such that \( \xi \) is the fundamental vector field, and the conditions (i)--(iii) are satisfied.

Moreover, every \ST manifold is locally of this form.
\end{proposition}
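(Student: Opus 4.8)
The plan is to build the correspondence through the Reeb foliation \( \mathcal F \), whose leaves are the integral curves of \( \xi \), and then reduce the local statement to the regular case by a flow-box argument.

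\emph{Forward direction.} Assuming regularity, \( \pi\colon N\to M \) is a submersion with \( \ker\pi_*=\Span{\xi} \) (and, when the leaves are closed, a principal \( S^1 \)-bundle with connection form \( \eta \)). First I would descend each piece of structure. Since \( \xi \) is Killing, \( \Ld_\xi g=0 \), so the restriction of \( g \) to \( \mathcal H:=\ker\eta \) is \( \mathcal F \)-basic and passes to a unique metric \( h \) on \( M \) with \( \pi \) a Riemannian submersion. Because \( \vp\xi=0 \), \( \eta\circ\vp=0 \) and \( \vp^2=-1 \) on \( \mathcal H \), the tensor \( \vp \) restricts to a complex structure on \( \mathcal H \); as \( \Ld_\xi\vp=0 \) on a normal almost contact metric manifold, it descends to a unique almost complex structure \( J \) on \( M \) with \( \pi_*\circ\vp=J\circ\pi_* \), and the compatibility relation \eqref{eq:acm-con} shows \( (h,J) \) is almost Hermitian. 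Finally \( \xi\hook d\eta=0 \) and \( \Ld_\xi d\eta=d(\xi\hook d\eta)=0 \) make \( d\eta \) basic, so \( d\eta=\pi^*\tau \) for a unique \( 2 \)-form \( \tau \); setting \( \sigma:=\tau/2\pi \), injectivity of \( \pi^* \) gives \( d\sigma=0 \), and \( \sigma \) is of type \( (1,1) \) because \( d\eta \) is (Remark~\ref{rem:ST-KT-comp}). When the leaves are circles, Chern--Weil identifies \( [\sigma] \) with the real first Chern class of \( N\to M \), hence \( [\sigma] \) is integral. Uniqueness of \( (h,J,\sigma) \) is immediate from (i)--(iii).

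\emph{Integrability of \( J \).} The only place where normality is really needed is in upgrading \( J \) to a genuine complex structure. I would deduce this from the cylinder: by Proposition~\ref{prop:ST-KT-cyl}, \( \mathcal K(N) \) is a complex manifold, the map \( (s,p)\mapsto\pi(p) \) is a submersion \( \mathcal K(N)\to M \) whose fibres \( \bR\times(\text{leaf}) \) are \( J_{\mathcal K} \)-invariant (they are spanned by \( \partial_s \) and \( \xi \), which \( J_{\mathcal K} \) interchanges) and on which \( \pi_* \) carries \( J_{\mathcal K}|_{\mathcal H}=\vp \) to \( J \); since the leaf space of a holomorphic foliation on a complex manifold is complex, \( J \) is integrable. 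One could instead compute directly that the Nijenhuis tensor of \( \vp \) projects to that of \( J \) and vanishes together with the Sasaki--Hatakeyama tensor \( S \).

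\emph{Converse.} Starting from \( (M,h,J) \), an integral closed \( (1,1) \)-form \( \sigma \), and a circle bundle \( N\to M \) with \( c_1(N)=[\sigma] \), Chern--Weil produces a connection form \( \eta \) with \( \tfrac1{2\pi}d\eta=\pi^*\sigma \). Let \( \xi \) be the generator of the \( S^1 \)-action with \( \eta(\xi)=1 \), put \( g:=\pi^*h+\eta\otimes\eta \), and define \( \vp \) by \( \vp\xi=0 \) and \( \vp|_{\mathcal H} \) the horizontal lift of \( J \) (\( \mathcal H=\ker\eta \)); the axioms \eqref{eq:acm-con} are then routine. For normality, the Nijenhuis tensor of \( \vp \) vanishes on \( \mathcal H \) because \( J \) is integrable and the splitting \( TN=\mathcal H\oplus\Span{\xi} \) is \( S^1 \)-invariant, and the remaining terms of \( S \) vanish because \( \Ld_\xi\vp=0 \) and \( d\eta \) has type \( (1,1) \). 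As \( h \) and \( \eta \) are \( S^1 \)-invariant, \( \Ld_\xi g=0 \), so \( \xi \) is Killing and \( (g,\xi,\eta,\vp) \) is \ST; it obviously satisfies (i)--(iii), and since (i) with \( \eta=\xi^\flat \) determines \( g \) while (ii) determines \( \vp \) on \( \mathcal H \) (with \( \vp\xi=0 \) forced), the \ST structure is unique.

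\emph{Local statement and main obstacle.} Near an arbitrary point, \( \xi \) has a flow box \( U\cong I\times V \) with \( \xi=\partial_t \); then \( V \) is the (regular) leaf space of \( U \), the local bundle \( U\to V \) is trivial, and the forward direction exhibits \( U \) in the asserted form, the integrality of \( [\sigma] \) being automatic. The main obstacle is the normality \( \Leftrightarrow \) integrability interplay --- descending \( J \) in the forward direction and, conversely, checking that the constructed \( (g,\xi,\eta,\vp) \) is normal --- while the role of the Killing hypothesis is exactly to make \( g \) descend so that \( \pi \) becomes a Riemannian submersion.
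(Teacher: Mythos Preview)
Your argument is correct and tracks the paper's proof closely in the forward construction, the converse, and the local flow-box reduction. The one genuine divergence is the proof that \( J \) is integrable: the paper observes directly that for horizontal \( \pi \)-related lifts \( \wt X,\wt Y \) of \( X,Y \), the Nijenhuis tensor \( N_J(X,Y) \) is \( \pi \)-related to the Sasaki--Hatakeyama tensor \( S(\wt X,\wt Y)=0 \), a two-line computation. You instead pass through the cylinder \( \mathcal K(N) \), using Proposition~\ref{prop:ST-KT-cyl} to get an honest complex manifold and then quotienting by the holomorphic foliation spanned by \( \partial_s \) and \( \xi \). This is a legitimate and slightly more conceptual route---it packages the normality/integrability interplay into the known fact that leaf spaces of holomorphic foliations are complex---but it imports more machinery than needed, and the same \( \pi \)-relatedness computation is in any case required in your converse direction (where you invoke it implicitly when you say the Nijenhuis tensor of \( \vp \) on \( \mathcal H \) vanishes because \( J \) is integrable). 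The paper's direct approach has the advantage of being symmetric between the two directions and entirely self-contained. Your added remark on integrality of \( [\sigma] \) via Chern--Weil (when the leaves are circles) is correct and mildly sharpens the forward statement, though the paper only asserts integrality in the converse hypothesis.
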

\begin{proof}
Since \( \xi \) is Killing, there is a unique metric \( h \) satisfying (i). By using \(\Ld_\xi \vp=0,\)
and the fact that \[ \pi_*\colon \ker\eta\to T_{\pi(p)}M\] is (pointwise) an isomorphism,
we deduce that there is a unique almost-complex structure \( J \) satisfying (ii).  If \( X,Y \) are vector fields on \( M \), \( \pi \)-related respectively to \( \wt X,\wt Y \), then (ii) implies that \( J(X) \), \( J(Y) \) are \( \pi \)-related to \( \vp(\wt X)\), \( \vp(\wt Y) \). Consequently, \( N_J(X,Y) \) is \( \pi \)-related to \( S(\wt X,\wt Y)=0 \). This proves \( (h,J) \) is a  Hermitian structure.

The differential form \( d\eta \) is basic, hence it is the pullback of some closed form \( \sigma \). In addition, the fact that \( d\eta \) is of type \( (1,1) \) with respect to \( \vp \) implies \( \sigma \) is \( J \)-invariant, and hence of type \( (1,1) \).

\smallskip
Now let \( (M,h,J) \) be a Hermitian manifold with a closed integral \( (1,1) \)-form \( \sigma \). Condition (i) determines the metric \( g \) on the distribution \( \ker\eta \); since \( \xi \) is orthogonal to this distribution, with unit norm, the metric \( g \) is determined. By construction, the vector field \( \xi \) is Killing. Similarly, condition (ii) determines \( \vp \). In order to prove that  \( S=0 \) identically, it suffices to show that 
\[\pi_*(S(\Lel X,\Lel Y))=0= \eta(S(\Lel X,\Lel Y)), \quad \textrm{for all }\Lel X,\Lel Y\in T_pN .\]
The first equation follows by the same argument as in the first part of the proof whilst the second follows from \( d^\vp\eta=d\eta \), which is a consequence of the hypothesis that \( \sigma \) has type \( (1,1) \).

The last part of the statement follows from the fact that every point of an \ST manifold has a foliated neighbourhood \( \wt N=\wt M\times(0,\epsilon) \). The cohomology class \( [\sigma] \) is then zero, and \( \wt N \) can be identified with an open subset of \mbox{\( \wt M\times S^1 \)}.
\end{proof}

\begin{remark}
The \ST structures of Proposition~\ref{prop:sandwich}  are not contact metric structures in general: if we denote by \( \omega \) the K\"ahler form on the base, \( \omega(X,Y)=h(JX,Y) \), the almost-contact metric structure on \( N \) is a contact metric structure if and only if  \( \sigma=-\omega\slash2\pi \). In particular, this construction can produce \ST manifolds that are not Sasaki, even when the base is K\"ahler.

Applying the construction to non-K\"ahler Hermitian manifolds gives rise to examples of \ST manifolds that are not quasi-Sasaki.
\end{remark}

\begin{remark}
It follows from the local classification of Proposition~\ref{prop:sandwich} that the cone over an \ST manifold can be locally identified with \( \mathcal C(\wt N)=\wt M\times\bC^* \). Slightly more generally, one can consider the case of a circle bundle; then the cone sits inside the complex line bundle with first Chern class equal to \( [\sigma] \).
\end{remark}

\begin{proposition}
In the correspondence of Proposition~\ref{prop:sandwich}, the torsion \( 3 \)-form \( c \) of  \( (N,g,\xi,\eta,\vp) \)  and the one, \( \wt c \), of the Hermitian manifold \( (M,h,J) \) are related by 
\[c=2\pi \eta\wedge(\pi^*\sigma) + \pi^*\wt c. \]
In particular, a \KT manifold is the space of leaves of an \SST manifold if and only if 
\[d\wt c=-4\pi^2 \sigma^2,\]
where \( \sigma \) is a  closed, integral form of type \( (1,1) \).
\end{proposition}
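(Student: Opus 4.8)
The plan is to compute the torsion \( c \) of the \ST structure on \( N \) directly from the formula \( c = \eta\wedge d\eta + d^\vp F \), using the fibre-bundle data of Proposition~\ref{prop:sandwich}. First I would observe that, since \( \pi_*\circ\vp = J\circ\pi_* \) on \( \ker\eta \) and \( \vp(\xi)=0 \), the pull-back of the K\"ahler form \( \omega \) on \( M \) equals the fundamental \( 2 \)-form \( F \) restricted to the horizontal distribution; more precisely \( F = \pi^*\omega \) as \( 2 \)-forms on \( N \) (both annihilate \( \xi \)). Hence \( dF = \pi^*(d\omega) \), and because \( d^\vp \) acts via \( \vp \), which intertwines with \( J \) downstairs, one gets \( d^\vp F = \pi^*(d^J\omega) = \pi^*\wt c \) by the very definition of the \KT torsion \( \wt c = d\omega(J\cdot,J\cdot,J\cdot) \) of \( (M,h,J) \). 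Combining this with \( \tfrac{1}{2\pi}d\eta = \pi^*\sigma \), i.e. \( \eta\wedge d\eta = 2\pi\,\eta\wedge\pi^*\sigma \), yields the claimed identity \( c = 2\pi\,\eta\wedge(\pi^*\sigma) + \pi^*\wt c \).

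Next I would differentiate. We have \( dc = 2\pi\, d\eta\wedge\pi^*\sigma - 2\pi\,\eta\wedge\pi^*(d\sigma) + \pi^*(d\wt c) \). Since \( \sigma \) is closed the middle term drops, and using \( d\eta = 2\pi\,\pi^*\sigma \) again the first term becomes \( 4\pi^2\,\pi^*(\sigma\wedge\sigma) \). Therefore
\begin{equation*}
dc = \pi^*\bigl(4\pi^2\,\sigma^2 + d\wt c\bigr).
\end{equation*}
Because \( \pi \) is a submersion, \( \pi^* \) is injective on forms, so \( dc = 0 \) if and only if \( d\wt c = -4\pi^2\,\sigma^2 \). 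This proves the forward direction: if \( N \) is \SST then the \KT base satisfies the stated relation, with \( \sigma \) closed, integral and of type \( (1,1) \) by Proposition~\ref{prop:sandwich}.

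For the converse, I would start from a \KT manifold \( (M,h,J) \) with torsion \( \wt c \) satisfying \( d\wt c = -4\pi^2\sigma^2 \) for some closed integral \( (1,1) \)-form \( \sigma \). Apply the second half of Proposition~\ref{prop:sandwich}: there is a circle bundle \( N\to M \) with \( c_1 = [\sigma] \) and a connection form \( \eta \) with \( \tfrac{1}{2\pi}d\eta = \pi^*\sigma \), giving a canonical \ST structure on \( N \). Its torsion is \( c = 2\pi\,\eta\wedge\pi^*\sigma + \pi^*\wt c \) by the first part, and the computation above shows \( dc = \pi^*(4\pi^2\sigma^2 + d\wt c) = 0 \); hence \( N \) is \SST and has \( M \) as its space of leaves.

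The only mildly delicate point is the identification \( d^\vp F = \pi^*\wt c \): one must check that \( \vp \) restricted to the horizontal distribution is genuinely \( \pi \)-conjugate to \( J \) and that \( d^\vp \) only sees the horizontal part of \( dF \). This is where \( \xi\hook dF = 0 \) (Proposition~\ref{prop:ST-charac-deriv}) is used — it guarantees \( dF \) is itself basic, so that feeding in \( \vp \)-images of vectors is the same as feeding in \( J \)-images of their projections. Everything else is a routine manipulation of basic forms on a principal circle bundle, so I expect no real obstacle.
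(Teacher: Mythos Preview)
Your proposal is correct and follows essentially the same route as the paper: the paper's proof consists precisely of the observation that \( F=\pi^*\omega \) and hence \( d^\vp F=(\pi^*d\omega)(\vp\cdot,\vp\cdot,\vp\cdot)=\pi^*(d\omega(J\cdot,J\cdot,J\cdot))=\pi^*\wt c \), leaving the differentiation and the converse implicit. Your write-up simply supplies those omitted details; note incidentally that once you have \( F=\pi^*\omega \) the condition \( \xi\hook dF=0 \) is automatic, so you need not invoke Proposition~\ref{prop:ST-charac-deriv} separately.
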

\begin{proof}
By construction \( F=\pi^*\omega \), where \( \omega \) is the K\"ahler form, and we therefore have
\[ d^\vp F=(\pi^* d\omega)(\vp\cdot, \vp\cdot, \vp\cdot)=\pi^*(d\omega(J\cdot,J\cdot,J\cdot))=\pi^*\wt c. \]
\end{proof}

Given an \SST manifold, there are two distinct cases: either \( d\eta \) is decomposable (see Example \ref{ex:quasi-Sasak-SKT} for a \( 5 \)-dimensional example), or \( d\eta\wedge d\eta\neq0 \) (e.g., on Sasaki manifolds). In consistency with the cylinder construction (Proposition \ref{prop:ST-KT-cyl}), the above proposition shows that the study of the decomposable case may be reduced to the study of \SKT manifolds, at least locally. On the other hand, the remaning case cannot be reduced to \SKT geometry.

\section{ST reductions}
\label{sec:ST-red}

Hamiltonian reduction plays an important role in symplectic as well as K\"ahler geometry. There is also a well known reduction of Sasaki manifolds \cite{Grantcharov-O:Sasak-red}, which has been used as a tool for constructing new examples. In this section, we describe one possible way of defining a reduction procedure of \ST manifolds. To some extent, it may be thought of as a generalization of the Sasaki reduction in the same way as Joyce's hypercomplex and quaternionic quotients \cite{Joyce:hclpx-quat-quo} generalise the hyper- and quaternionic-K\"ahler quotient constructions. 
 
\paragraph{A quotient construction}
In the following, we consider an \ST manifold  \( (M,g,\xi,\eta,\vp) \) equipped with a free action of a compact Lie group \( G \) that preserves \ST structure. We denote by \( \Lel X^* \) the fundamental vector field associated to an element \( \Lel X\in\lie{g} \). By imitating \cite{Grantcharov:Reduction}, we have:

\begin{definition}
\label{def:G-moment}
 A \emph{ \( G \)-moment map} is an equivariant mapping \( \mu\colon M\to \g^* \) satisfying the following conditions:
\begin{compactenum}
\item For each non-zero \( \Lel X\in\g \), \( d\mu_p(\vp(\Lel X^*)) \) is nowhere-zero for \( p\in\mu^{-1}(0) \);
\item \( \mu \) is basic with respect to the Reeb foliation.
\end{compactenum}
\end{definition}

The condition (i) ensures that zero is a regular value of \( \mu \), since the linear map
\[\lie{g}\to\lie{g}^*, \quad \Lel X\mapsto d\mu_p(\vp(\Lel X^*))\]
has trivial kernel. It follows \( M_0=\mu^{-1}(0) \) is a regular submanifold, and we shall denote by \( \iota\colon M_0\hookrightarrow M \) the inclusion. As \( G \) acts freely on \( M_0 \), the quotient \( M\sslash G=M_0\slash G \) is also smooth, and we therefore have a principal bundle \( \pi\colon M_0\to M_0\slash G \).

More generally, one can consider a reduction \( \mu^{-1}(\Lel a)\slash G \), where \( \Lel a \) is an arbitrary point in \( (\g^*)^G \). In order for this to work, one needs a stronger definition of \( G \)-moment map. We say a \( G \)-moment map \( \mu \) is \emph{global} if \( d\mu_p(\vp(\Lel X^*)) \) is globally non-zero for each non-zero  \( \Lel X\in\g \). Notice, however, a global \( G \)-moment map has no critical points and therefore can only exist if \( M \) is non-compact.

By construction, we have \( T_pM_0=\ker d\mu_p \), and the distribution
\begin{multline*}
\Hor=\{(p;\Lel X)\in TM_0\colon\, d\mu_p(\vp(\Lel X))=0\}\\
=\{(p;\Lel X)\in TM\colon\, \mu(p)=0=d\mu_p(\Lel X)=d\mu_p(\vp(\Lel X))\}
 \end{multline*}
defines a connection on the principal bundle. Indeed, \( \Hor \) is \( G \)-equivariant because so are \( \mu \) and \( \vp \). In addition, the
condition (i) ensures \( \Hor \) is transversal to \( \ker\pi \). Consequently, a vector field \( X \) on \( M_0 \) has a horizontal projection \( X^{\Hor} \).

We introduce a similar notation for forms, meaning \( \eta^\Hor \) will be the \( 1 \)-form mapping \( X \) to \( \eta(X^\Hor) \), and so forth. The failure of the distribution \( \Hor \) to be orthogonal to \( \ker\pi_* \) is measured by an invariant one-form:
\[\alpha\in\Omega^1(M_0,\lie{g}^*), \quad \langle \alpha(X), \Lel Y\rangle = g(X^\Hor,\Lel Y^*), \quad Y\in\lie{g}.\]
The contraction of \( \alpha \) with the curvature of the principal bundle, regarded as a \( 2 \)-form \( R\in\Omega^2(M_0,\g) \), will play a role when comparing the torsion of \( M \) with that of \( M\sslash G \). Another relevant contraction is the four-form
\[(R\hook c)(X_1,X_2,X_3,X_4)=\sum_{i<j}(-1)^{i+j} c\left(R(X_i,X_j)^*,X_1,\dotsc, \widehat{X_i}, \dotsc, \widehat{X_j},\dotsc, X_4\right).\]

A straightforward, but important, consequence of condition (ii) is the following:

\begin{lemma}
\label{lemma:reduction}
The distribution \( \Hor \) is invariant under \( \vp \) and contains \( \xi \).
\qed
\end{lemma}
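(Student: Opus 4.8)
The plan is to verify the two assertions of Lemma~\ref{lemma:reduction} directly from the description of \( \Hor \) and the hypotheses on \( \mu \). First I would show that \( \xi \in \Hor \) at each point of \( M_0 \). Using the second characterisation of \( \Hor \), I need \( \mu(p)=0 \) (which holds since \( p\in M_0 \)), \( d\mu_p(\xi)=0 \), and \( d\mu_p(\vp(\xi))=0 \). The last is immediate because \( \vp(\xi)=0 \) by the almost contact identities \eqref{eq:acm-con}. For \( d\mu_p(\xi)=0 \), I would invoke condition (ii) of Definition~\ref{def:G-moment}: \( \mu \) is basic with respect to the Reeb foliation, which precisely means \( \Ld_\xi\mu=0 \) and \( \xi\hook d\mu=0 \); in particular \( d\mu_p(\xi)=0 \). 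Hence \( \xi\in\Hor \) on \( M_0 \).

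Next I would prove \( \vp \)-invariance. Take \( (p;\Lel X)\in\Hor \), so \( \mu(p)=0 \), \( d\mu_p(\Lel X)=0 \) and \( d\mu_p(\vp(\Lel X))=0 \); I must check the same three conditions for \( \vp(\Lel X) \). The first is unchanged. The second becomes \( d\mu_p(\vp(\Lel X))=0 \), which is the third condition for \( \Lel X \). The third condition for \( \vp(\Lel X) \) is \( d\mu_p(\vp^2(\Lel X))=0 \); by \eqref{eq:acm-con} we have \( \vp^2(\Lel X)=-\Lel X+\eta(\Lel X)\xi \), so \( d\mu_p(\vp^2(\Lel X))=-d\mu_p(\Lel X)+\eta(\Lel X)\,d\mu_p(\xi) \). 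The first term vanishes since \( (p;\Lel X)\in\Hor \), and the second vanishes by the \( \xi\hook d\mu=0 \) part of condition (ii) already used above. Therefore \( \vp(\Lel X)\in\Hor \), which also confirms that \( \Hor \) is a genuine (even-dimensional) \( \vp \)-invariant distribution complementary to \( \Span{\xi} \) within \( \ker\eta \) — though only the stated invariance is needed here.

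I do not anticipate a serious obstacle: the whole lemma is a bookkeeping argument combining the pointwise algebraic identities \eqref{eq:acm-con} with the two defining properties of a \( G \)-moment map. The one point that requires a little care is making explicit that ``basic with respect to the Reeb foliation'' unpacks as both \( \Ld_\xi\mu=0 \) and \( \xi\hook d\mu=0 \); once that is granted, every step is a one-line substitution. Accordingly, in the write-up I would simply record the computations above, remark that \( \xi\in\Hor \) and that \( \Hor \) is \( \vp \)-stable, and invoke these facts in the subsequent comparison of the torsion forms of \( M \) and \( M\sslash G \).
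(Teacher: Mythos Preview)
Your argument is correct and is precisely the verification the paper intends: the lemma is stated with a bare \qed, preceded only by the remark that it is ``a straightforward, but important, consequence of condition (ii)'' of Definition~\ref{def:G-moment}. One small slip in your closing aside (which you rightly flag as inessential): having just shown \( \xi\in\Hor \), the distribution \( \Hor \) is odd-dimensional and contains \( \xi \), so it is not ``complementary to \( \Span{\xi} \) within \( \ker\eta \)''; rather \( \Hor=\Span{\xi}\oplus(\Hor\cap\ker\eta) \), with the second summand the even-dimensional \( \vp \)-invariant piece.
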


Since \( G \) acts preserving the almost contact metric structure, we can define a structure on the quotient using the splitting \( T_pM_0=\Hor_p\oplus\g \) together with projection. Explicitly, we define a Riemannian metric on the quotient by \[ \wt g(\pi_{*p}(\Lel X), \pi_{*p}( \Lel Y))=g(\Lel X,\Lel Y), \quad \Lel X,\Lel Y\in \Hor_p.\] Be warned that for non-zero \( \alpha \), \( \pi \) is generally not a Riemannian submersion (see, for instance, Example~\ref{ex:threespheres}).

Similarly, we define a unit vector field \( \wt\xi \) on \( M_0\slash G \) that is \( \pi \)-related to \( \xi \), and set
\[\wt\eta=\wt\xi^\flat, \quad \wt\vp (\pi_{*p}(\Lel X))=\pi_{*p}(\vp(p;\Lel X)), \quad \Lel X\in \Hor_p.\]
Well-definedness follows from the  \( G \) equivariance of \( \vp \)  and Lemma~\ref{lemma:reduction}.

\begin{proposition}
\label{prop:reduction}
Let \( (M,g,\xi,\eta,\vp) \) be an almost contact metric manifold endowed with the free action of a compact Lie group \( G \) preserving the structure. If there exists a \( G \)-moment map \( \mu\colon M\to\g^* \)
then the reduction \( M\sslash G \) inherits an almost contact metric structure \( (\wt g, \wt\vp, \wt\xi,\wt\eta) \), and if \( M \) is normal so is \( M\sslash G \).

Moreover, if \( M \) is \ST then \( M\sslash G \) is also \ST. In this case, the torsion forms are related via
 \[\pi^*\wt c = c^\Hor - \langle \alpha,R\rangle, \quad \pi^*d\wt c = (dc-R\hook c)^\Hor - \langle (d\alpha)^\Hor,R\rangle.\]
\end{proposition}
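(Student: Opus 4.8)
The plan is to verify the claims in three stages: first the almost contact metric structure and its normality, then the \ST property, then the two torsion identities. For the first stage, the metric \( \wt g \), vector field \( \wt\xi \), form \( \wt\eta \) and endomorphism \( \wt\vp \) are all defined by transporting the corresponding objects on \( M_0 \) along \( \pi_* \) using the splitting \( T_pM_0=\Hor_p\oplus\g \); Lemma~\ref{lemma:reduction} and \( G \)-equivariance guarantee well-definedness, and the compatibility conditions \eqref{eq:acm-con} descend pointwise because \( \pi_{*p}\colon\Hor_p\to T_{\pi(p)}(M\sslash G) \) is a linear isomorphism intertwining \( \vp \) with \( \wt\vp \) and \( g|_{\Hor} \) with \( \wt g \). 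Normality: one checks \( \wt S=0 \) by the same mechanism as in Proposition~\ref{prop:sandwich}, namely that for \( \pi \)-related vector fields the bracket \( [\wt X,\wt Y] \) differs from the \( \Hor \)-projection of \( [X,Y] \) only by a vertical term, and \( \vp \) applied to vertical terms lands in \( \Hor \) again; so \( \wt S(\wt X,\wt Y) \) is \( \pi \)-related to the \( \Hor \)-component of \( S(X^\Hor,Y^\Hor) \), which vanishes.

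For the \ST property I would invoke Proposition~\ref{prop:ST-charac-deriv}: it suffices to show \( \Ld_{\wt\xi}\wt F=0 \) and \( \wt\xi\hook d\wt F=0 \). Since \( \wt\xi \) is \( \pi \)-related to \( \xi \) and \( \wt F \) pulls back (on horizontal arguments) to \( F^\Hor \), both identities follow from the corresponding ones on \( M \) after checking that \( \Hor \) is \( \xi \)-invariant — which holds because \( \xi\in\Hor \) by Lemma~\ref{lemma:reduction} and \( \Ld_\xi \) preserves the defining equations of \( \Hor \) (as \( \mu \) is basic and \( \Ld_\xi\vp=0 \)).

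The substance is the torsion formula. By Proposition~\ref{prop:STcharac}, \( \wt c=\wt\eta\wedge d\wt\eta+d^{\wt\vp}\wt F \); the strategy is to pull this back via \( \pi \) and compare with \( c=\eta\wedge d\eta+d^\vp F \) restricted to horizontal vectors. The key computational input is the relation between \( d \) on \( M_0\slash G \) and \( d \) on \( M_0 \): for a basic form \( \beta \), \( d(\pi^*\beta)=\pi^*(d\wt\beta) \) holds on \emph{horizontal} arguments, but the discrepancy on arguments involving the vertical distribution is governed by the curvature \( R \) of the connection \( \Hor \). Concretely, I expect the identity \( \iota^*c = \pi^*\wt c + \langle\alpha,R\rangle \) — equivalently \( \pi^*\wt c=c^\Hor-\langle\alpha,R\rangle \) — to emerge by writing each term of \( c \) as a sum of its purely-horizontal part and parts paired with vertical vectors, and then using that for vertical \( Y^* \) one has \( c(Y^*,\cdot,\cdot) \) controlled by \( \alpha \) (which measures exactly the non-orthogonality of \( \Hor \) to \( \ker\pi_* \)) together with the fact that the curvature \( R \) records \( [\,\cdot\,,\cdot\,]^{\mathrm{vert}} \) of horizontal lifts. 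For the second formula, apply \( d \) to \( \pi^*\wt c=c^\Hor-\langle\alpha,R\rangle \); using \( d\circ\pi^*=\pi^*\circ d \) on basic forms and the Bianchi identity \( dR=0 \) (equivariantly, \( d^\nabla R=0 \)), the term \( d\langle\alpha,R\rangle \) produces \( \langle(d\alpha)^\Hor,R\rangle \), while the horizontal part of \( dc \) picks up the correction \( R\hook c \) precisely because \( d \) of a horizontally-restricted form reintroduces vertical directions, and contracting those vertical entries against \( c \) is by definition \( R\hook c \).

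The main obstacle is bookkeeping in this last step: making precise the statement ``\( d \) of the horizontal restriction equals the horizontal restriction of \( d \), up to curvature-contraction terms'' for \( 3 \)-forms that are not basic, and tracking exactly how \( \alpha \) enters. I would handle it by choosing, around a point of \( M_0 \), a local coframe adapted to the splitting \( \Hor\oplus\g \) — basic \( 1 \)-forms pulled back from \( M\sslash G \) together with the connection \( 1 \)-forms — expanding \( c \), \( \eta \), \( F \) in this coframe, and reading off both the \( \langle\alpha,R\rangle \) correction and the \( R\hook c \) correction from the structure equations of the principal bundle. The \( \vp \)-invariance properties of \( d\eta \) (type \( (1,1) \)) and \( dF \) (type \( (2,1)+(1,2) \)) recorded in Remark~\ref{rem:ST-KT-comp}, together with Lemma~\ref{lemma:reduction}, ensure the various horizontal projections interact correctly with \( \wt\vp \), so that \( d^{\wt\vp}\wt F \) really does pull back to the \( \vp \)-twisted horizontal part of \( dF \).
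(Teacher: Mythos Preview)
Your normality argument has a genuine gap. You assert that ``\( \vp \) applied to vertical terms lands in \( \Hor \) again'', but this is false: Lemma~\ref{lemma:reduction} gives only \( \vp(\Hor)\subset\Hor \), and for a non-zero vertical vector \( \Lel Y^* \) condition~(i) of Definition~\ref{def:G-moment} says precisely that \( d\mu_p(\vp(\Lel Y^*))\neq0 \), so \( \vp(\Lel Y^*) \) does not even lie in \( T_pM_0 \), let alone in \( \Hor \). Consequently, the claim that \( \wt S(\wt X,\wt Y) \) is \( \pi \)-related to the horizontal component of \( S(X,Y) \) does not follow as you state it. The paper avoids this by extracting from \( S(X,Y)=0 \) a weaker but sufficient consequence: the \emph{combination} \( [\vp(X),Y]+[X,\vp(Y)] \) lies in \( \Hor \) whenever \( X,Y\in\Hor \) (this is \eqref{eqn:normalred}). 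Only this sum, not each bracket individually, is forced to be horizontal, and that is exactly what lets one push \( \wt\vp \) through the relevant terms of \( \wt S \) without ever applying \( \vp \) to a vertical vector.

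The same observation drives the paper's torsion computation, and this is what your coframe approach is missing. Combining \eqref{eqn:normalred} with the \( \xi \)-invariance of \( \Hor \) gives \( [X,Y]^\Ver=[\vp(X),\vp(Y)]^\Ver \) for horizontal \( X,Y \). With this identity the paper evaluates \( \pi^*d\wt F(\wt\vp\cdot,\wt\vp\cdot,\wt\vp\cdot) \) directly; the vertical-bracket corrections to \( \eta\wedge d\eta \) and to \( d^\vp F \) then combine into the single cyclic sum \( \sumcic g([X,Y]^\Ver,Z)=-\langle\alpha,R\rangle(X,Y,Z) \), and the second formula follows by one application of the Bianchi identity. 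Your structure-equations route could in principle be pushed through, but without first isolating \( [X,Y]^\Ver=[\vp(X),\vp(Y)]^\Ver \) the \( \vp \)-twisted term \( d^{\wt\vp}\wt F \) will not reduce cleanly, and the bookkeeping you flag as ``the main obstacle'' becomes genuinely difficult rather than routine.
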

\begin{proof}
We have to prove \( (\wt g,\wt \xi,\wt\eta,\wt \vp) \) satisfies \eqref{eq:acm-con}. By Lemma~\ref{lemma:reduction}, these equations reduce to the analogous equations for \( \xi,\eta \) and \( \vp \) on \( \Hor_p \).

Now suppose \( M \) is normal. If \( X,Y \) are vector fields on \( M_0 \) contained in \( \Hor \), then the vanishing of \eqref{eq:Sasaki-Htensor} implies
\[\vp([\vp( X), Y]+ [ X,\vp( Y)])_p\in T_pM_0.\]
Since  \( [\vp( X), Y]_p \) and \([ X,\vp( Y)]_p\) are also in \( T_pM_0 \), it follows that 
\begin{equation}
\label{eqn:normalred}
[\vp( X), Y]_p+ [ X,\vp( Y)]_p\in \Hor_p.
\end{equation}
Therefore, if \( X \) is \( \pi \)-related to \( \wt X \) and \( Y \) is \( \pi \)-related to \( \wt Y \),
\[\pi_{*p}\bigl(\vp\bigl([\vp( X), Y]+ [ X,\vp( Y)]\bigr)\bigr) =\wt\vp([\wt\vp(\wt X),\wt Y]) +\wt\vp([\wt X,\wt\vp(\wt Y)])  .\]
It follows that \( \wt S(\wt X,\wt Y) \) is \( \pi \)-related to \( S(X,Y) \), hence zero.

Suppose in addition that \( M \) is \ST. By the normality assumption we have \( \Ld_\xi\phi=0 \) which implies \( \xi \) preserves \( \Hor \). Indeed, if \( X \) is contained in \( \Hor \), then
\[d\mu([\xi,X])=-d^2\mu(\xi,X)+\Ld_\xi d\mu(X)-\Ld_X d\mu(\xi)=0. \]
By the same token, we have \( d\mu(\vp [\xi,X])=d\mu([\xi,\vp X])=0 \).

Now, using the fact that \( \xi \) is a Killing vector field, we find
\begin{multline*}
\pi^*\left( \Ld_{\wt\xi} \wt g(\wt X,\wt Y)-\wt g([\wt \xi, \wt X], \wt Y)-\wt g(\wt X,[\wt \xi, \wt Y])\right)\\
=\Ld_\xi g(X,Y)-g([\xi,X]^\Hor,Y)-g(X,[\xi,Y]^\Hor)=0.
\end{multline*}

Finally, in order to compute the torsion, we observe \eqref{eqn:normalred} together with the fact that \( \xi \) preserves \( \Hor \) imply that whenever \( X,Y \) are contained in \( \Hor \) then so is  \( [X,Y]-[\vp (X),\vp (Y)] \). Writing \( X=X^\Hor+X^\Ver \), we can rephrase this fact in terms of the equality
\[[X,Y]^\Ver=[\vp(X),\vp(Y)]^\Ver.\]
We also have
\[\pi^*\wt\eta(\wt X)=\eta(X), \quad \pi^*\wt F(\wt X,\wt Y)=F(X,Y),\]
and then compute
\begin{gather*}
\pi^*d\wt\eta(\wt X,\wt Y)=d\eta(X,Y)+\eta([X,Y]^\Ver),\\
\pi^*d\wt F(\wt X,\wt Y,\wt Z)=dF(X,Y,Z)+\sumcic_{\{X,Y,Z\}} F([X,Y]^\Ver,Z),
\end{gather*}
where the summation \( \sumcic \) is a cyclic summation over \( X,Y,Z \). Now we see that
\begin{multline*}
\pi^*d\wt F(\wt\vp(\tilde X),\wt\vp(\wt Y),\wt\vp(\wt Z))=d^\vp F( X, Y, Z)+\sumcic_{\{X,Y,Z\}}g\left([\vp (X),\vp (Y)]^\Ver,-\vp^2 (Z)\right)\\
=d^\vp F(X, Y, Z)+\sumcic_{\{X,Y,Z\}}g\left([X,Y]^\Ver,Z\right)-\sumcic_{\{X,Y,Z\}}\eta\left([X,Y]^\Ver\right)\eta(Z).
\end{multline*}
Summing up gives
\[\pi^*\wt c(\wt X,\wt Y,\wt Z)=c(X,Y,Z)+\sumcic_{\{X,Y,Z\}}g([X,Y]^\Ver,Z)=c(X,Y,Z)-\langle \alpha,R\rangle(X,Y,Z),
\]
as required. The final formula is obtained applying the Bianchi identity.
\end{proof}

\begin{remark}
\label{rem:STred}
In the proof of Proposition~\ref{prop:reduction}, it is not sufficient to assume that \( \xi \) is Killing to prove that \( \wt\xi \) is Killing. Normality is also required. This gives additional motivation for our definition of \ST structures, showing that it is the correct type of structure to consider if one wants the same type of structure to be induced on the reduction.
\end{remark}

\begin{remark}
\label{remark:STKTreduction}
By definition, a \( G \)-moment map is always basic with respect to the Reeb foliation. Thus, if \( M \) is a regular \ST manifold, a \( G \)-moment map on  \( M \) descends to a \( G \)-moment map on the space of leaves \( N \). Moreover, the space of leaves of \( M\sslash G \) can be identified with \( N\sslash G \) as a \KT manifold.
\end{remark}

\begin{remark}
\label{rem:moment-map-cand}
The natural candidate for a \( G \)-moment map is
\[\mu=\sum \Lel E_a\otimes \iota_a\eta.\]
 By definition, this is a \( G \)-moment map only when the matrix
\[\left(d\eta_p(\Lel E_a^*,\vp(\Lel E_b^*)\right))_{ab}\]
is  non-degenerate at each point \( p\in\mu^{-1}(0) \).

Note that  in the Sasaki case, as \( d\eta=2F \), the non-degeneracy condition is equivalent to asserting the fundamental vector fields induced by the action  do not vanish on \( \mu^{-1}(0) \). 
\end{remark}

\paragraph{Non-existence}
The existence of a \( G \)-moment map is a non-trivial topological condition. Consider, for example, the compact Lie group \( N=\Un(3) \) endowed with an \SST structure \( (g,\eta,\xi,\vp) \) as in Theorem~\ref{thm:cmp-Lie-grp}. Denoting by \( \Lel E_{ij} \) the standard basis of the space of complex \( 3\times3 \) matrices, we can assume, at the Lie algebra level, \[ \xi_e=i\Lel E_{11}, \quad \vp_e(i\Lel E_{22})=i\Lel E_{33}.\] Now let \( G \) be the subgroup generated by \( i\Lel E_{22} \), acting on \( \Un(3) \) on the right. Since \( G \) is contained in the maximal torus, it preserves the structure.

Suppose \( \mu\colon \Un(3)\to\g^*\cong\bR \) is a \( G \)-moment map, and consider the tori \( \Un(1) \), \( T_2 \), \( T_3 \) with Lie algebras
\[\lt_{1}=\Span{i\Lel E_{33}}, \quad \lt_2=\Span{i\Lel E_{11},i\Lel E_{22}}, \quad \lt_3=\Span{i\Lel E_{11},i\Lel E_{22},i\Lel E_{33}}, \] respectively. Then \( \mu^{-1}(0)\subset \Un(3) \) is a \( T_2 \) invariant regular submanifold. Denoting its quotient by \( P \), we obtain a diagram
\[\begin{diagram}
 \node{\mu^{-1}(0)} \arrow{e}\arrow{s,l}{T_2} \node {\Un(3)} \arrow{s,l}{T_2}\\
\node{P} \arrow{e} \node {\Un(3)/T_2}\arrow{e,t}{\Un(1)}\node{\Un(3){/T_3}}
\end{diagram}\] Obviously, \( P \) has codimension one in \( \Un(3)/T_2 \), and, by Definition~\ref{def:G-moment}(i), it is transverse to the fibres of the circle bundle \( \Un(3)/T_2\to \Un(3)/{T_3} \). Since \( P \) is compact, the composition \( P\to\Un(3)/{T_3} \) is a covering map. However, \( \Un(3)/{T_3} \) is simply connected, so this mapping is actually a diffeomorphism. In other words, the circle bundle  \( \Un(3)/T_2\to \Un(3)/{T_3} \) has a section. This is a contradiction because the first Chern class is non-zero.

\paragraph{Existence}
Throughout, we shall assume \( (M,g,\xi,\eta,\vp) \) is an \SST manifold on which a compact Lie group \( G \) acts freely and preserving the structure. We look for sufficient conditions to ensure the existence of a global \( G \)-moment map.

First, we recall (see \cite[Proposition 3.5.1]{ElKacimi}) the following:

\begin{theorem}[Kacimi-Alaoui]
If \( M \) is a compact, transversely K\"ahler manifold, and \( \omega \) is a basic form of type \( (1,1) \), with \( [\omega]=0 \) in basic cohomology, then there exists a basic function \( f \) such that
\( \omega=\partial\overline{\partial} f \).
\end{theorem}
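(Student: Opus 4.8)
The plan is to transcribe the classical $\partial\overline{\partial}$-lemma for compact K\"ahler manifolds to the basic de Rham complex $\Omega_B^\bullet(M)$ of the foliation, so that the only non-formal ingredients become (a) basic Hodge theory and (b) the transverse K\"ahler identities --- and both of these are precisely what the cited analysis of El Kacimi-Alaoui provides. Throughout, $d_B,\partial_B,\overline{\partial}_B$ denote the exterior derivative and the transverse Dolbeault operators on $\Omega_B^\bullet(M)$; transverse K\"ahlerness forces the foliation to be transversely holomorphic, so $d_B=\partial_B+\overline{\partial}_B$, and the operators in the statement are $\partial=\partial_B$ and $\overline{\partial}=\overline{\partial}_B$ acting on basic forms. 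Recall that $[\omega]=0$ means $\omega=d_B\beta$ for some $\beta\in\Omega_B^1(M)$.

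For (a) I would quote: after replacing, if necessary, the bundle-like metric by one whose mean curvature form is basic (Dom\'{\i}nguez), the basic Laplacian $\Delta_B$, as well as $\Delta_{\partial_B}$ and $\Delta_{\overline{\partial}_B}$ on each transverse bidegree, have finite-dimensional kernels $\mathcal{H}_B^{\bullet}$ representing the basic cohomology, together with $L^2$-orthogonal Hodge decompositions whose summands are closed. For (b): since basic forms feel only the transverse metric, the K\"ahler identities hold verbatim, so $\Delta_B=2\Delta_{\partial_B}=2\Delta_{\overline{\partial}_B}$; in particular these operators preserve the transverse bidegree, and a basic form is harmonic if and only if it is annihilated by $\partial_B$ and $\partial_B^*$, equivalently by $\overline{\partial}_B$ and $\overline{\partial}_B^*$.

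Granting this, the argument is the familiar one. Split $\beta=\beta^{1,0}+\beta^{0,1}$; comparing transverse bidegrees in $d_B\beta=\omega$ gives $\partial_B\beta^{1,0}=0$, $\overline{\partial}_B\beta^{0,1}=0$ and $\omega=\overline{\partial}_B\beta^{1,0}+\partial_B\beta^{0,1}$. Applying $\Delta_{\partial_B}$-Hodge theory to $\beta^{1,0}$, say $\beta^{1,0}=h_1+\partial_B\phi_1+\partial_B^*\psi_1$ with $h_1$ basic harmonic of type $(1,0)$, $\phi_1$ a basic function and $\psi_1$ basic of type $(2,0)$, the hypothesis $\partial_B\beta^{1,0}=0$ forces $\partial_B\partial_B^*\psi_1=0$, hence $\|\partial_B^*\psi_1\|^2=\langle\psi_1,\partial_B\partial_B^*\psi_1\rangle=0$ and $\beta^{1,0}=h_1+\partial_B\phi_1$; symmetrically $\beta^{0,1}=h_2+\overline{\partial}_B\phi_2$. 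By the harmonic characterisation in (b), $\overline{\partial}_B h_1=0$ and $\partial_B h_2=0$, so
\[\omega=\overline{\partial}_B\partial_B\phi_1+\partial_B\overline{\partial}_B\phi_2=\partial_B\overline{\partial}_B(\phi_2-\phi_1),\]
and $f:=\phi_2-\phi_1$ is the desired basic function; when $\omega$ is real one replaces $f$ by a suitable real combination of $f$ and $\overline{f}$.

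The genuine obstacle is input (a): the basic Laplacian is not elliptic on $M$, so the basic Hodge decomposition is not formal, and the preliminary normalisation of the bundle-like metric --- so that the mean curvature is basic and $d_B^*$ is the honest $L^2$-adjoint on $\Omega_B^\bullet(M)$ --- is not cosmetic. This is exactly the content of the quoted theorem of El Kacimi-Alaoui, established by realising $\Delta_B$ inside a transversely elliptic operator and exploiting that Riemannian foliations on compact manifolds carry enough finiteness; everything else above is the formal K\"ahler package, and one need only check at each step that $d_B,\partial_B,\overline{\partial}_B$ and their basic adjoints preserve $\Omega_B^\bullet(M)$, which is built into their definition.
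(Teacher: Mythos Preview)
The paper does not prove this theorem: it is quoted verbatim from El Kacimi-Alaoui \cite[Proposition~3.5.1]{ElKacimi} as a black box, with no argument supplied. So there is no ``paper's own proof'' to compare against; the authors simply invoke the result in order to formulate what it means for an almost contact manifold to ``satisfy the \( \partial\overline{\partial} \) lemma'' and then use that hypothesis in Theorem~\ref{thm:reduction}.

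Your sketch is a correct and standard outline of how the result is actually established in the source: one transports the classical K\"ahler \( \partial\overline{\partial} \)-lemma to the basic complex, the formal manipulations being identical once (a) a basic Hodge decomposition and (b) the transverse K\"ahler identities \( \Delta_B=2\Delta_{\partial_B}=2\Delta_{\overline{\partial}_B} \) are in hand. You are also right that the only genuine analytic content is (a), since the basic Laplacian is not elliptic on \( M \); this is exactly what El Kacimi-Alaoui's machinery of transversely elliptic operators on Riemannian foliations supplies. One minor historical remark: El Kacimi-Alaoui's original argument predates Dom\'{\i}nguez's theorem on tautness, so the normalisation of the bundle-like metric you invoke is a convenient modern shortcut rather than the route taken in \cite{ElKacimi}; either way, the modification affects only the leafwise part of the metric and leaves the transverse K\"ahler structure untouched, so your use of it is harmless.
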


An almost contact manifold is said to satisfy the \( \partial\overline{\partial} \) lemma provided the condition of this theorem holds. 

The other conditions we need are conveniently expressed in terms of equivariant cohomology.  Fix  a basis \( \{\Lel E_a\} \) of \( \g \), and denote by \( \{x^a\} \) the dual basis of \( \g^* \). Let \( \iota_a \) be the interior product with \( \Lel E_a^* \), and \( \Ld_a \) the Lie derivative with respect to \( \Lel E_a^* \). 
The Cartan model for equivariant cohomology is given by the following complex
\[C_G(\Omega(M))=\left(S(\g^*)\otimes \Omega(M)\right)^G, \quad d_G=1\otimes d-\sum_a x^a\otimes \iota_a.\]
As \( G \) acts freely on \( M \), by assumption,  equivariant cohomology reduces to cohomology on the quotient:
\[H^*(C_G(\Omega(M)),d_G)\cong H^*_{dR}(M/G)\]
(see \cite[Theorem 5.2.1]{GuilleminSternberg}).

We also need an equivariant version of the \( \partial\overline{\partial} \) lemma.  In order to state the relevant condition, it will be convenient to indicate by \( [\alpha]_0 \) the image of an equivariant form \( \alpha \) under the projection
\[\left(S(\g^*)\otimes \Omega(M)\right)^G\twoheadrightarrow \left(S(\g^*)\otimes \Omega^0(M)\right)^G.\]
\begin{definition}
An almost contact metric manifold  \( (M,g,\xi,\eta,\vp) \) endowed with the free action of a compact Lie group \( G \) \emph{satisfies the \( G \)-equivariant \( \partial\overline{\partial} \) lemma} if the following holds: whenever \( p\in S^2(\g^*)^G \) is \( d_G \)-exact then there exists \( \sigma \) such that \( [d_Gd^\vp\sigma]_0=p \).
\end{definition}

There is a purely topological condition implying the equivariant \( \partial\overline{\partial} \) lemma: the Chern-Weyl homomorphism \( \kappa_G \) fits into the commutative  diagram
\[ \begin{diagram}
    \node{(S^2(\lie{g}^*))^G} \arrow{e}\arrow{se,b} {\kappa_G} \node {H^4(C_G(M),d_G)} \arrow{s,r} {\cong} \\
\node{} \node { H^4(M/G)} 
   \end{diagram}, \]
so by assuming \( \kappa_G \) is injective  on \( (S^2(\g^*))^G \), all \( d_G \) exact forms in \( S^2(\g^*)^G \) are zero. Consequently, the equivariant \( \partial\overline{\partial} \) lemma holds trivially in this case.

In analogy with \cite{Grantcharov:Reduction}, we obtain the following existence result:

\begin{theorem}
\label{thm:reduction}
Suppose \( (M,g,\xi,\eta,\vp) \) is an \SST manifold on which a compact Lie group \( G \) acts freely preserving the structure. If the \( \partial\overline{\partial} \) lemma and the equivariant \( \partial\overline{\partial} \) lemma are satisfied, and the torsion form \( c \) extends to a closed equivariant \( 3 \)-form in \( C_G(\Omega(M)) \), then there is a global \( G \)\nobreakdash-moment map on \( M \). 

In particular, every \( G \)-invariant element of \( \g^* \) gives rise to a reduction \( M\sslash G \) with an induced \ST structure.
\end{theorem}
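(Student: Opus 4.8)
The plan is to follow the pattern of \cite{Grantcharov:Reduction}: build the moment map by correcting the naive candidate of Remark~\ref{rem:moment-map-cand} with an equivariant term extracted from transverse $\partial\overline{\partial}$-potentials, the equivariant closedness of $c$ supplying the cohomological input that makes the correction possible.

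First recall that $\mu_0:=\sum_a\Lel E_a\otimes\iota_a\eta$ is $G$-equivariant, basic for the Reeb foliation, and satisfies $d\langle\mu_0,\Lel X\rangle=-\iota_{\Lel X^*}d\eta$; consequently $-d\eta+\mu_0$ is, with no hypotheses needed, an equivariantly closed form in $C^2_G(\Omega(M))$. What prevents $\mu_0$ from being a $G$-moment map is that the symmetric form $(\Lel X,\Lel Y)\mapsto d\eta_p(\Lel X^*,\vp(\Lel Y^*))$ can degenerate, namely at points where $\xi$ lies in the span of the fundamental vector fields. So the aim is to replace $-d\eta+\mu_0$ by a $d_G$-cohomologous equivariantly closed form $\rho+\mu$ whose $2$-form part $\rho$ is a closed, $G$-invariant, Reeb-basic $(1,1)$-form that is transversely positive definite on \emph{all} of $M$; then $\mu$, the $(S^1(\g^*)\otimes\Omega^0(M))$-part, will be the desired map, and non-degeneracy of $\langle d\mu_p(\vp(\Lel X^*)),\Lel Y\rangle=\rho_p(\Lel X^*,\vp(\Lel Y^*))$ will follow at once from the pointwise linear independence of the $\Lel E_a^*$ (the action being free).

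The data for the correction come from the hypotheses as follows. Writing the given closed equivariant extension of the torsion as $c_G=c+\sum_a x^a\otimes\beta_a$, and noting that $dc=0$ because $M$ is \SST, the equation $d_Gc_G=0$ forces $d\beta_a=\iota_{\Lel E_a^*}c$ and $\beta_a(\Lel E_b^*)+\beta_b(\Lel E_a^*)=0$. Through $c=\eta\wedge d\eta+d^\vp F$ these identities control the equivariant class obstructing the passage from $-d\eta+\mu_0$ to an extension with positive $\rho$. Using $H^*(C_G(\Omega(M)),d_G)\cong H^*_{dR}(M/G)$ one reduces the equivariant equation $d\langle\mu,\Lel X\rangle=\iota_{\Lel X^*}\rho$ to a statement about de Rham classes on $M/G$; the ordinary $\partial\overline{\partial}$ lemma is then used to upgrade the basic $(1,1)$-primitives that appear into genuine transverse potentials, and the $G$-equivariant $\partial\overline{\partial}$ lemma is used to absorb the remaining obstruction in $S^2(\g^*)^G$: for the relevant $d_G$-exact $p\in S^2(\g^*)^G$ it produces $\sigma$ with $[d_Gd^\vp\sigma]_0=p$, and incorporating $d^\vp\sigma$ into the equivariant form makes it $d_G$-closed while fixing $\rho$ inside its class. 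Positivity of $\rho$ is then arranged as in the K\"ahler setting by further adding $i\partial\overline{\partial}$ of a basic function; this can only succeed when $M$ is non-compact, consistently with the fact that a global $G$-moment map is a submersion and hence has no critical points.

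Granting such a $\rho$, the resulting $\mu$ is equivariant and basic by construction ($\xi\hook d\langle\mu,\Lel X\rangle=\iota_{\Lel X^*}(\xi\hook\rho)=0$ since $\rho$ is Reeb-basic), and the matrix $\rho_p(\Lel E_a^*,\vp(\Lel E_b^*))$ is invertible at every point, so $\mu$ is a global $G$-moment map. For $\Lel a\in(\g^*)^G$ the translate $\mu-\Lel a$ is again a global $G$-moment map — its differential is unchanged, it stays equivariant because $\Lel a$ is invariant, and it stays basic — so $\mu^{-1}(\Lel a)/G=(\mu-\Lel a)^{-1}(0)/G$ carries an induced \ST structure by Proposition~\ref{prop:reduction}, which is the final assertion. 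The main obstacle is the third paragraph: coaxing the two $\partial\overline{\partial}$ lemmas and the equivariant extendability of $c$ into producing a \emph{single} closed, $G$-invariant, Reeb-basic $(1,1)$-form that is transversely positive on the whole of $M$ — this global positivity, rather than positivity just along $\mu^{-1}(0)$, is precisely what separates a \emph{global} moment map from an ordinary one and what the hypotheses are tailored to provide.
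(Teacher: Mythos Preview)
Your outline has the right general shape---use the equivariant extension of $c$ to extract $1$-forms $\alpha_a$ with $d\alpha_a=\iota_a c$ and $\iota_a\alpha_b+\iota_b\alpha_a=0$, then invoke the two $\partial\overline\partial$ lemmas---but the decisive step is misplaced, and the argument as written has a real gap.

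You aim to produce an equivariantly closed form $\rho+\mu$ with $\rho$ a \emph{transversely positive definite} $(1,1)$-form, and you propose to obtain the positivity ``as in the K\"ahler setting by further adding $i\partial\overline\partial$ of a basic function''. Nothing in the hypotheses supplies such a function: asking that a closed basic $(1,1)$-form become positive after adding $dd^\vp f$ is a K\"ahler-cone type condition, and non-compactness alone does not guarantee it. You yourself flag this paragraph as ``the main obstacle'', and indeed it is not an obstacle you have surmounted. Moreover, transverse positivity of $\rho$ on all of $\ker\eta$ is far stronger than what Definition~\ref{def:G-moment} requires, which is only that the $\dim\g\times\dim\g$ matrix $\langle d\mu_p(\vp(\Lel E_a^*)),\Lel E_b\rangle$ be non-singular.

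The paper avoids this entirely by a different choice of starting $1$-form: instead of working in the equivariant class of $-d\eta+\mu_0$, it sets
\[
\beta_a=(\Lel E_a^*)^\flat+\alpha_a,
\]
so that the symmetric part of $\iota_a\beta_b$ is exactly $g(\Lel E_a^*,\Lel E_b^*)$, the Gram matrix of the fundamental vector fields. After applying the $\partial\overline\partial$ lemma to write $d\beta=dd^\vp\mu$ and the equivariant $\partial\overline\partial$ lemma to kill the residual polynomial $d_G(\beta-d^\vp\mu)\in S^2(\g^*)^G$, one obtains $\wt\mu=\mu+\sigma$ with
\[
d\wt\mu_a(\vp(\Lel E_b^*))+d\wt\mu_b(\vp(\Lel E_a^*))=2g(\Lel E_a^*,\Lel E_b^*),
\]
which is everywhere positive definite simply because $g$ is Riemannian and the action is free. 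No transverse K\"ahler form is constructed or needed; the metric $g$ itself furnishes the non-degeneracy. That is the idea your third paragraph is missing.
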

\begin{proof}
For an element \( \alpha\in \g^*\otimes\Omega^k(M) \), we will write \( \alpha_a \) for the contraction with \( \Lel E_a \), i.e., \( \alpha=\sum_a x^a\otimes \alpha_a \). By hypothesis, there exists \( \alpha\in\left(\g^*\otimes\Omega^1(M)\right)^G \) such that
\[0=d_G(c+\alpha)=-\sum_a x^a\otimes \iota_a c + x^a\otimes d\alpha_a -\sum_{a,b} x^ax^b\otimes \iota_b \alpha_a, \]
or phrased differently, we have
\[d\alpha_a=\iota_a c\quad \textrm{and} \quad \iota_a \alpha_b + \iota_b \alpha_a=0.\]
Now define a \( 1 \)-form \( \beta\in\lie{g}^*\otimes \Omega^1(M) \) by
\[\beta= \sum_a x^a  \otimes(\Lel E_a^*)^\flat+\alpha.\]
A standard computation, using the canonical connection \( \NB \), shows that that each component \( d\beta_a \) is both basic with respect to the Reeb foliation and of type \( (1,1) \), i.e. \( d^\vp \beta_a=d\beta_a \). By the \( \partial\overline{\partial} \) lemma, we can write \( d\beta=dd^\vp\mu \),
where \( \mu\in\g^*\otimes\Omega^0(M) \) is basic with respect to the Reeb foliation. We can assume, by averaging over \( G \), that \( \mu \) is also \( G \)-invariant. By construction,
\( \gamma=\beta-d^\vp\mu \) is \( d \) closed as well as invariant. Therefore \( d_G\gamma=\sum_{a,b} x^ax^b\iota_a \gamma_b \)
is both \( G \)-invariant and, by  \( d_G^2=0 \), \( d \)-closed. In other words, \( d_G\gamma \) equals an invariant polynomial in \( S^2(\g^*) \). 

Applying the equivariant \( \partial\overline{\partial} \) lemma to \( p=d_G\gamma \), we find \( \sigma \) such that 
\[ [d_Gd^\vp \sigma]_0=d_G\gamma. \]
With no loss of generality we can assume that \( \sigma \) is in \( \g^*\otimes \Omega^0(M) \). Setting  \( \wt\mu =\mu+ \sigma \), we compute
\[[d_Gd^\vp\wt\mu]_0=[d_G(\beta-\gamma+d^\vp\sigma)]_0=[d_G\beta]_0=\sum_{a,b}x^ax^b\iota_a\beta_b= \sum_{a,b}x^ax^b g(\Lel E_a^*,\Lel E_b^*).\]
In other terms,
\[d\wt\mu_a(\vp (\Lel E_b^*))+ d\wt\mu_b(\vp (\Lel E_a^*))\]
is a non-degenerate symmetric matrix in \( a,b \) at each point, so  \( d\wt\mu(\vp\Lel E_b^*) \) is nowhere zero. Identifying \( \wt\mu \) with an equivariant map \( M\to\g \) and composing with an isomorphism \( \g\cong\g^* \), we obtain a \( G \)-moment map, as required.
\end{proof}

\begin{remark}
\label{remark:noglobalmomentmap}
The equivariant \( \partial\overline{\partial} \) lemma is a non-trivial condition. Consider, for instance, the product of an \SST manifold \( Z \) with any K\"ahler manifold. This has a product almost contact metric structure which is in fact \SST. More concretely, take \( M=Z\times S^1\times \Un(1) \), where \( \Un(1) \) is thought of as the fibre of a principal bundle \( M\to Z\times S^1 \). The standard K\"ahler structure on the two-torus \( S^1\times \Un(1) \) induces a product \SST structure on \( M \). By construction, the torsion is basic with respect to \( \Un(1) \), and so is \( d_{\Un(1)} \) closed. On the other hand, the \( d_{\Un(1)} \) exact polynomial 
\[x^2\in \bR[x]\otimes\Omega(M)^{\Un(1)}=C_{\Un(1)}(M)\]
cannot be written as  \( [d_{\Un(1)}d^\phi\sigma]_0 \). Indeed, given a map \( \sigma\colon M\to\lie{u}(1)^*\cong\bR \), consider the embedding
\[\iota\colon S^1\to Z\times S^1\times \Un(1), \quad \iota(e^{i\theta})=(p,e^{i\theta},q), \]
and denote by \( \Lel X \) the standard generator of \( \lie{u}(1) \). Then the condition
\[0=\int_{S^1} \iota^*d\sigma =\int_0^{2\pi} d\sigma_{\iota(e^{i\theta})} (\vp (\Lel X^*)) d\theta\]
shows that \( \iota_{(\Lel X^*)}d^\vp\sigma \) cannot be identically non-zero.

The same argument shows it is not possible to find a global  \( \Un(1) \)-moment map. However, the invariant function \(\nu(p,e^{i\theta},q)=\cos\theta\)
defines a (non-global) \( \Un(1) \)-moment map, with \( M\sslash S^1\cong Z \). 
\end{remark}

We conclude the study of reductions providing a number of examples.

\begin{example}
As a first exercise, let us consider Euclidean space,\[ N=\bR\times\bC^{k+1}=\bR\times \bR^{2(k+1)},\quad g=ds^2+\sum_{j=1}^{k+1}dx_j^2+dy_j^2, \] 
with the obvious associated \ST structure: \( \xi=\partial\slash{\partial s}, \eta=ds \), and \( \vp \) induced via the standard complex structure on \( \bC^{k+1} \). This \ST manifold admits a structure preserving circle action given by \( e^{it}\cdot z:= (e^{it}z_1,\ldots,e^{it}z_{k+1}) \). Let us now introduce the \( S^1 \) invariant function \( \mu\colon\,N\to\bR \), \[ \mu(z)=-\sum_{j=1}^{k+1}|z_j|^2+1. \] Its derivative \( d\mu=-2\sum_{j=1}^{k+1}\left(x_jdx_j+y_jdy_j\right) \) has the properties \( d\mu(\xi)=0 \) and \( d\mu(\vp(X))\neq0 \); the latter follows as \( \vp \) applied to the fundamental vector field is given by \[ \vp(X)=-\sum_{j=1}^{k+1}(x_j\partial\slash{\partial x_j}+y_j\partial\slash{\partial y_j}), \] which implies \( d\mu(\vp(X))=2\sum_{j=1}^{k+1}|z_j|^2 \). Consequently, \( \mu \) is an \( S^1 \)-moment map.

The reduced space \( N\sslash{S^1} \) is \( \bR\times\bC P(k) \), and the associated \ST structure has fundamental \( 2 \)-form which can be identified with the Fubini-Study form on complex projective \( k \)-space. In this case both \( N \) and \( N\sslash{S^1} \) have null torsion; this is consistent with Proposition~\ref{prop:reduction}, since \( \alpha \) is zero, i.e. \( X \) is orthogonal to the distribution \( \Hor=\ker d\mu\cap \ker d^\vp\mu \).

Finally, note that any function \( f \) on \( \bC^{k+1} \) induces a transversal conformal transformation of the above \ST structure on \( N \). If \( f \) is chosen to be \( S^1 \) invariant, then the transformed \ST structure can be reduced via \( \mu \) to give an \ST structure on \( \bR\times\bC P(k) \).   
\end{example}

More generally, any example  of \KT reduction gives rise to an example of \ST reduction by taking an appropriate circle bundle (see Proposition~\ref{prop:sandwich} and Remark~\ref{remark:STKTreduction}). In order to produce an example on which both the manifold \( M \) and the quotient are irregular, we will make use of the following elementary observation:

\begin{proposition}
\label{prop:torusbundle}
Let \( M \) be an normal almost contact metric manifold \( (g,\xi,\eta,\vp) \) with two closed, integral \( \vp \) invariant \( 2 \)-forms \( \sigma_1 \), \(\sigma_2 \). Then the torus bundle \mbox{\( \hat M\xrightarrow{S^1\times S^1} M \)} with Chern classes \( \sigma_1,\sigma_2 \) has a family of \ST structures \( (\hat g, \hat \xi, \hat\eta,\hat\vp) \) defined as follows. Let \( \omega=(\omega_1,\omega_2) \) be a connection form with
\( \frac{1}{2\pi}d\omega_i=\pi^*\sigma_i \), and let \( X_1 \), \( X_2 \) be  the fundamental vector fields. Then 
\[\hat g(X,Y)=\omega_1(X)\omega_1(Y)+\omega_2(X)\omega_2(Y)+ g(\pi_*X,\pi_*Y),\]
and, given real constants \( s,t \),  \( s^2+t^2=1 \),
\[\hat\xi = t X_1+sX_2, \quad \hat\eta = t\omega_1+s\omega_2, \quad \hat\phi=\phi+\hat\eta^\perp\otimes \xi-\eta\otimes \hat\xi^\perp,\]
where tensors on \( M \) are lifted to \( \hat M \) horizontally using the metric, and 
\[\hat\xi^\perp = -s X_1+tX_2, \quad\hat\eta^\perp = -s\omega_1+t\omega_2.\]
\end{proposition}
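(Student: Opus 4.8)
The plan is to reduce the statement to the local classification of Proposition~\ref{prop:sandwich}, or rather to a direct check in the spirit of its proof. First I would set up the geometry of the torus bundle $\hat M \xrightarrow{S^1\times S^1} M$: the connection form $\omega = (\omega_1,\omega_2)$ splits $T\hat M$ into a horizontal distribution $\Hor = \ker\omega_1\cap\ker\omega_2$ and the vertical distribution spanned by $X_1, X_2$, with $\Hor$ being $\hat g$-orthogonal to the fibres by definition of $\hat g$. Note that $\hat\xi$ and $\hat\xi^\perp$ together span the vertical distribution (since $\begin{pmatrix} t & s \\ -s & t\end{pmatrix}$ is invertible), so $\hat\eta = \hat\xi^\flat$ restricts to zero on $\Hor$ and $\hat\eta^\perp$ is dual to $\hat\xi^\perp$. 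The endomorphism $\hat\phi$ should be checked to map $\Hor$ to $\Hor$ via the lift of $\phi$ on $M$, to map $\hat\xi$ to $0$, and to map $\hat\xi^\perp$ to... here one must be careful: on the vertical plane $\hat\phi = \hat\eta^\perp\otimes\xi - \eta\otimes\hat\xi^\perp$ needs the horizontal lift $\xi$ of the Reeb field of $M$, so $\hat\phi(\hat\xi^\perp) = \xi$ (horizontal) and $\hat\phi(\xi) = -\hat\xi^\perp$; one should verify this is consistent, i.e. that the three summands in the definition of $\hat\phi$ do not overlap in their domains. With these identifications, verifying the almost contact metric conditions \eqref{eq:acm-con} is a routine linear-algebra check: $\hat\phi^2 = -1 + \hat\eta\otimes\hat\xi$ splits into the cases on $\Hor$ (where it follows from $\phi^2 = -1 + \eta\otimes\xi$ on $M$, together with tracking the extra term involving $\xi$) and on the vertical plane (where one computes $\hat\phi^2(\hat\xi) = 0$, $\hat\phi^2(\hat\xi^\perp) = \hat\phi(\xi) = -\hat\xi^\perp$, and $\hat\eta(\hat\xi^\perp) = 0$, $\hat\eta(\hat\xi) = 1$).

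Next I would establish normality of $(\hat g,\hat\xi,\hat\eta,\hat\phi)$, i.e. the vanishing of the Sasaki--Hatakeyama tensor $\hat S$ from \eqref{eq:Sasaki-Htensor}. The cleanest route is probably to imitate the converse direction of Proposition~\ref{prop:sandwich}: one shows $\hat S(\hat X,\hat Y) = 0$ by checking separately that its projection $\pi_*\hat S(\hat X,\hat Y)$ vanishes and that its contractions $\hat\omega_i(\hat S(\hat X,\hat Y))$ vanish. For the horizontal projection, the argument is exactly as in Proposition~\ref{prop:sandwich}: if $\hat X, \hat Y$ are horizontal lifts of $\pi$-related vector fields, then $\hat\phi(\hat X), \hat\phi(\hat Y)$ are horizontal lifts of $\phi$-images, and brackets project correctly modulo vertical terms, so $\pi_*\hat S(\hat X,\hat Y)$ is $\pi$-related to $S(\cdot,\cdot) = 0$ on $M$ (plus the correction term $d\hat\eta(\hat X,\hat Y)\hat\xi$, whose horizontal projection is zero since $\hat\xi$ is vertical). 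For the vertical components one uses $\frac{1}{2\pi}d\omega_i = \pi^*\sigma_i$ with $\sigma_i$ of type $(1,1)$ with respect to $\phi$: this is precisely the condition that makes the curvature $2$-forms $\phi$-invariant, which is what kills the vertical part of $\hat S$ (this is the same mechanism as the "$d^\vp\eta = d\eta$" step in the proof of Proposition~\ref{prop:sandwich}). One should also handle the cases where one or both of $\hat X, \hat Y$ are vertical; since $X_1, X_2$ are the fundamental vector fields of the (structure-preserving) torus action their flows preserve everything, so $\Ld_{X_i}\hat\phi = 0$ and the corresponding Sasaki--Hatakeyama terms vanish — this is the analogue of the relation $\Ld_\xi\phi = 0$ invoked in Proposition~\ref{prop:ST-charac-deriv}.

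Finally I would verify the \ST condition, namely that $\hat\xi = tX_1 + sX_2$ is Killing for $\hat g$. This is immediate: $X_1$ and $X_2$ are fundamental vector fields of the torus action, which preserves the connection form $\omega$ and the base structure $(g,\phi,\eta,\xi)$, hence preserves the metric $\hat g$ (which is assembled $G$-invariantly from $\omega$ and $\pi^*g$); so $\Ld_{X_i}\hat g = 0$ for $i = 1,2$, and therefore $\Ld_{\hat\xi}\hat g = 0$. By Definition~\ref{def:ST}, $(\hat g,\hat\xi,\hat\eta,\hat\phi)$ is an \ST structure. Alternatively, once normality is in hand one can invoke Proposition~\ref{prop:ST-charac-deriv} and check $\Ld_{\hat\xi}\hat F = 0 = \hat\xi\hook d\hat F$, where $\hat F = \hat g(\hat\phi\cdot,\cdot)$; a short computation gives $\hat F = \pi^*F + (\text{a combination of }d\omega_1, d\omega_2)$ — more precisely $\hat F = \pi^*F - s\,\omega_1\wedge\cdot - \ldots$, with the vertical contributions built from the $\sigma_i$ — and the invariance of all building blocks under the torus action, together with $\hat\xi\hook d\omega_i = 0$, gives the claim.

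The main obstacle, I expect, is the bookkeeping in the vertical directions: the definition of $\hat\phi$ mixes the two vertical vector fields $\hat\xi, \hat\xi^\perp$ with the horizontal lift $\xi$ of the Reeb field of $M$, and getting the signs and the domains of the three summands $\phi$, $\hat\eta^\perp\otimes\xi$, $-\eta\otimes\hat\xi^\perp$ to fit together consistently — so that $\hat\phi$ restricted to $\Hor$ is just $\phi$, $\hat\phi(\hat\xi) = 0$, and $\hat\phi$ interchanges $\xi$ and $-\hat\xi^\perp$ — requires care. Everything else is a matter of transcribing the proof of Proposition~\ref{prop:sandwich} from the circle-bundle case to the torus-bundle case, where the only genuinely new input is that $\hat\xi$ is a fixed unit-norm combination $tX_1 + sX_2$ rather than a single fundamental vector field, and the "$\perp$" data $(\hat\xi^\perp,\hat\eta^\perp)$ records the orthogonal direction inside the vertical plane.
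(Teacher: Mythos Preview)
Your proposal is correct and follows essentially the same route as the paper's proof: verify normality by checking separately that $\pi_*\hat S=0$ (using normality of $M$) and that the vertical components $\hat\eta\circ\hat S$, $\hat\eta^\perp\circ\hat S$ vanish (using that $\sigma_1,\sigma_2$ are of type $(1,1)$), then note $\hat\xi$ is Killing because it is a combination of fundamental vector fields of the structure-preserving torus action. The paper's proof is considerably terser and omits the verification of the almost-contact-metric axioms and the case analysis for vertical arguments, but your more detailed bookkeeping is entirely in the spirit of what the paper sketches.
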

\begin{proof}
By construction, \( \hat\xi \) is a Killing field, so it suffices to prove that the Sasaki-Hatakeyama \( \hat S \) vanishes, i.e.
\[ \pi_*\circ\hat S=0 = \hat\eta\circ \hat S = \hat\eta^\perp \circ \hat S. \]
 This is a straightforward computation where one uses normality and the fact that \( \sigma_1 \), \( \sigma_2 \) are of type (1,1).
\end{proof}
\begin{example}
\label{ex:threespheres}
By Proposition~\ref{prop:torusbundle}, the product of three odd-dimensional spheres has a family of \ST structures compatible with the standard metric, associated to the torus fibration
\[S^{2k+1}\times S^{2n+1}\times S^{2m+1}\to S^{2k+1}\times \CP^n\times\CP^m.\]
If we choose \( s,t \) so that \( s\slash t \) is an irrational number, then the structure is irregular. 
Consider such a structure on \( \hat M=S^1\times  S^{2n+1}\times S^{2m+1} \). A straightforward computation yields
\begin{gather*}
\hat F=\pi\sigma_1+\pi\sigma_2 + (-s\omega_1+t\omega_2)\wedge \eta,\\
\hat c=2\pi(t\sigma_1+s\sigma_2)\wedge\hat\eta +2\pi(-s\sigma_1+t\sigma_2)\wedge (-s\omega_1+t\omega_2), \quad d\hat c=4\pi^2(\sigma_1^2+\sigma_2^2).
\end{gather*}
Writing the generic element of \( \hat M \) as
\[p=(x,y_1,\dots, y_{n+1},z_1,\dots, z_{m},\rho e^{i\theta})\in \bC\times \bC^{n+1}\times \bC^{m+1},\]
we define a \( \Un(1) \) action on \( \hat M \)  by
\[e^{i\gamma}\cdot p=\bigl(x,e^{i\gamma}y_1,\dotsc, e^{i\gamma}y_{n+1}, z_1, \dotsc, z_{m},\rho e^{i(\gamma+\theta)}\bigr).\]
Thus, the fundamental vector field has the form \( X=X_1+\frac{\partial}{\partial \theta} \). If we set
\[ \mu(x,y,z)=\re x, \]
we see that \( d^\vp\mu \) is a multiple of \( \hat\eta^\perp \). Noting next
\[f=\hat\eta^\perp(X)=-s+t\rho^2,\]
we conclude that  \( \mu \) is a \( \Un(1) \)-moment map if \( \abs{s}>\abs{t} \).  By Proposition~\ref{prop:reduction}, the reduction 
\[\hat M\sslash \Un(1)=\frac{S^{2n+1}\times S^{2m+1}}{\Un(1)}.\] 
is an \ST manifold. Observe this structure is irregular, because the fundamental vector field is transverse to the two-torus generated by \( \hat \xi \). Notice also that \( X \) is not orthogonal onto  the distribution \( \mathcal{H}=\ker\hat\eta^\perp \). More precisely, projection on \( \Hor \) is given by
\[Y^\Hor = Y-f^{-1}\hat\eta^\perp(Y)X, \quad \beta^\Hor=\beta -f^{-1}\hat\eta^\perp\wedge (X\hook\beta),\]
so the \( 1 \)-form \( \alpha \) of Proposition~\ref{prop:reduction} is given by
\[\alpha=\omega_1+\rho^2d\theta -f^{-1}(1+\rho^2)\hat\eta^\perp.\]
The curvature of the circle bundle is 
\begin{gather*}
R=d(f^{-1}\hat\eta^\perp)=2\pi f^{-1}(-s\sigma_1+t\sigma_2)-2tf^{-2}\rho d\rho\wedge\hat\eta^\perp,
\end{gather*}
and therefore,  by Proposition~\ref{prop:reduction}, the torsion \( c \) of \( \hat M\sslash S^1 \) is determined by
\begin{multline*}
 \pi^*c=\hat c^\Hor -\langle \alpha,R\rangle
=2 \pi f^{-2}\rho^2(sfd\theta +t^2(\rho^2\omega_1  - \omega_2)-s(-s\omega_1+t\omega_2))\wedge \sigma_1\\
-2 \pi f^{-2}(tf\rho^2d\theta+\rho^2(st-st\rho^2+1)\omega_1 +(\rho^2(st-t^2)-1)\omega_2))\wedge \sigma_2\\
+2f^{-2}(-1+st\rho^2)\rho d\rho\wedge\omega_1\wedge\omega_2-2tf^{-2}\rho^3d\rho \wedge d\theta \wedge (-s\omega_1+t\omega_2).
\end{multline*}
Similarly, we can apply Proposition~\ref{prop:reduction} to prove that the reduction is not \SST: since \( R\hook\hat c \) is horizontal, the formula reduces to 
\begin{multline*}
\pi^*d c = (d\hat c)^\Hor-R\hook \hat c - \langle (d\alpha)^\Hor,R\rangle\\
=4 \pi^2 f^{-2}((\rho^4 t^2 + s^2\rho^2)\sigma_1^2 -\rho^2(1+2st-st\rho^2+t^2)\sigma_1\wedge\sigma_2 -(\rho^2(st-t^2)-1)\sigma_2^2)\\
-4\pi f^{-3}\rho d\rho \wedge (-s\omega_1+t\omega_2)\wedge\bigl((s+2t+t\rho^2)(-s\sigma_1+t\sigma_2) -2tf\sigma_1 +(2f^2-tf\rho^2)\sigma_2\bigr) \\
-4\pi f^{-1}\rho d\rho\wedge (d\theta-\omega_2)\wedge (-s\sigma_1+t\sigma_2) .
\end{multline*}
\end{example}
\begin{remark}
The same argument carries through replacing \( \CP^n\times\CP^m \) with an arbitrary Hermitian manifold with two closed integral \( (1,1) \)-forms \( \sigma_1,\sigma_2\). 
\end{remark}

\begin{remark}
The \( \Un(1) \)-moment map \( \mu \) of  Example~\ref{ex:threespheres} can be made global by replacing the \( S^1 \) factor in \( \hat M \) with \( \bR \).
\end{remark}

\begin{example}
\label{ex:spherereduction}
We now turn to study \ST reductions of the Sasaki spheres described in Example \ref{ex:Sasak-sphere}. On \( S^{2k+1}\subset\bC^{k+1} \), \( k>0 \), we consider the circle action given via \[ e^{is}\cdot z:=(e^{-is}z_1,e^{is}z_2,\ldots,e^{-is}z_k,e^{is}z_{k+1}).\] Clearly, the associated vector field is \[ X=\sum_{j=1}^{k+1}(-1)^j\left(x_j\partial\slash{\partial y_j}-y_j\partial\slash{\partial x_j}\right). \] Motivated by the Remark \ref{rem:moment-map-cand}, we now look at the invariant function 
\[ \mu\colon  S^{2k+1}\to\bR,\quad  \mu(z)=\mu(x,y)=\eta_z(X)=\sum_{j=1}^{k+1}(-1)^j|z_j|^2,\]
 which clearly satisfies \( d\mu(\xi)=0 \). Additionally, we see that \( d\mu(\vp(X)) \) cannot vanish on the level set \[ \mu^{-1}(0)=\left\{z\in S^{2k+1}\colon\,\sum_{j=1}^{[k/2+1]}  |z_{2j-1}|^2=\sum_{j=1}^{[(k+1)/2]} |z_{2j}|^2 \right\}, \]  which is the product of two \( k \)-dimensional spheres if \( k \) is odd, and a \( (k+1) \)-sphere with  a \( (k-1) \)-sphere if \( k \) is even. In conclusion, \( \mu \) is an \( S^1 \)-moment map.

It is clear  that the \ST reduction \( S^{2k+1}\sslash{S^1} \) can be identified with \( (S^k\times S^k)\slash{S^1} \) or  \( (S^{k+1}\times S^{k-1})\slash{S^1} \), depending on the parity of \( k \). Also note that if we apply a transversal conformal transformation, using a horizontal function of the type considered in Example \ref{ex:Sasak-sphere}, then the transformed \ST structure on \( S^{2k+1} \) can be reduced using \( \mu \); this follows by the \( S^1 \) invariance of the functions \( \sum_j\lambda_j|z_j|^2 \). 
\end{example}

\paragraph{Acknowledgements}
TBM gratefully acknowledges financial support from the \textsc{Danish Council for Independent Research, Natural Sciences}. We appreciate the useful comments from the referee.

\newpage

\end{document}